\renewcommand{\phi}{\varphi}
\newtheorem{theorem}{Theorem}[section]
\newtheorem{lemma}[theorem]{Lemma}
\newtheorem{corollary}[theorem]{Corollary}
\newtheorem{proposition}[theorem]{Proposition}
\newtheorem{question}[theorem]{Question}
\newtheorem{defi}[theorem]{Definition}
\newenvironment{emdef}{\begin{defi} \rm}{ \end{defi}}
\newtheorem{exa}[theorem]{Example}
\newenvironment{remark}{\begin{rem} \rm}{ \end{rem}}
\newtheorem{rem}[theorem]{Remark}
\newtheorem{claim}[theorem]{Claim}
\DeclareMathOperator{\range}{range}
\DeclareMathOperator{\Id}{Id}
\newcommand{\ZeU}{\#\left(0^{U}\right)}
\newcommand{\ZeUS}{\#\left(0^{\sigma^U_s}\right)}
\newcommand{\ZeX}{\#\left(0^{X}\right)}
\newcommand{\ZeY}{\#\left(0^{Y}\right)}
\newcommand{\ZeZ}{\#\left(0^{Z}\right)}
\newcommand{\ZeYz}{\#\left(0^{Y_{0}}\right)}
\newcommand{\ZeYu}{\#\left(0^{Y_{1}}\right)}
\newcommand{\ZeXs}{\#\left(0^{X^{*}}\right)}
\newcommand{\ZeYs}{\#\left(0^{Y^{*}}\right)}
\newcommand{\ZYv}{\#\left(0^{Y_{0} \lor Y_{1}}\right)}
\newcommand{\ZYw}{\#\left(0^{Y_{0} \wedge Y_{1}}\right)}
\DeclareMathOperator{\Peq}{\mathbf{Peq}}
\DeclareMathOperator{\ZeS}{\#\left(0^{\sigma}\right)}
\DeclareMathOperator{\ZeSz}{\#\left(0^{\sigma_0}\right)}
\DeclareMathOperator{\ZeSu}{\#\left(0^{\sigma_1}\right)}
\DeclareMathOperator{\ZeA}{\#\left(0^{\tau}\right)}
\DeclareMathOperator{\ZeAz}{\#\left(0^{\tau_0}\right)}
\DeclareMathOperator{\ZeAu}{\#\left(0^{\tau_1}\right)}
\DeclareMathOperator{\ZeSlA}{\#\left(0^{\sigma \lor \tau}\right)}
\DeclareMathOperator{\ZeSwA}{\#\left(0^{\sigma \wedge \tau}\right)}
\newcommand{\ZOP}[1]{\#\left(0^{#1}\right)}
\newcommand{\set}[1]{\{{#1}\}}
\newcommand{\rel}[1]{\mathrel{#1}}
\newcommand{\str}[1]{\langle #1 \rangle}
\newcommand{\catt}[2]{#1\widehat{\phantom{\alpha}}\str{#2}}
\title[Punctual equivalence relations]{Punctual equivalence
relations and their (punctual) complexity}
\author[N.~Bazhenov]{Nikolay Bazhenov}
\address{Sobolev Institute of Mathematics, pr. Akad.
Koptyuga 4, Novosibirsk, 630090 Russia
}
\email{bazhenov@math.nsc.ru}
\author[K.M.~Ng]{Keng Meng Ng}
\address{School of Physical and Mathematical Sciences
\\
Nanyang Technological University\\
Singapore}\email{kmng@ntu.edu.sg}
\author[L.~San Mauro]{Luca San Mauro}
\address{Department of Mathematics ``Guido Castelnuovo'', Sapienza University of Rome,
I-00185 Rome, Italy}
\email{\href{mailto:luca.sanmauro@gmail.com}{luca.sanmauro@gmail.com}}
\author[A.~Sorbi]{Andrea Sorbi}
\address{Department of Information Engineering and Mathematics\\
University of Siena\\
I-53100 Siena, Italy}
\email{\href{mailto:andrea.sorbi@unisi.it}{andrea.sorbi@unisi.it}}
\thanks{Bazhenov was supported by the Mathematical Center in Akademgorodok
under agreement No.~075-15-2019-1613
with the Ministry of Science and Higher Education of the Russian Federation.
Ng was supported by MOE Tier 1 grant RG23/19.
San Mauro was supported by the Austrian Science Fund
FWF, Project M 2461. Sorbi is a member of
INDAM-GNSAGA, and was partially supported by PRIN 2017
Grant ``Mathematical Logic: models, sets, computability''.}
\keywords{Primitive recursive function, primitive recursive equivalence
relation, lattice}
\subjclass[2010]{03D25}
\begin{document}

\begin{abstract}
The complexity of equivalence relations has received much attention in the
recent literature. The main tool for such endeavour is the following
reducibility: given equivalence relations $R$ and $S$ on natural numbers,
$R$ is computably reducible to $S$ if there is a computable function $f :
\omega \rightarrow \omega$ that induces an injective map from
$R$-equivalence classes to $S$-equivalence classes. In order to compare the
complexity of equivalence relations which are computable, researchers
considered also feasible variants of computable reducibility, such as the
polynomial-time reducibility. In this work, we explore $\mathbf{Peq}$, the
degree structure generated by primitive recursive reducibility on punctual
equivalence relations (i.e., primitive recursive equivalence relations with
domain $\omega$). In contrast with all other known degree structures on
equivalence relations, we show that $\mathbf{Peq}$ has much more structure:
e.g., we show that it is a dense distributive lattice. On the other hand,
we also offer  evidence of the intricacy of $\mathbf{Peq}$, proving, e.g.,
that the structure is neither rigid nor homogeneous.
\end{abstract}

\maketitle

\section{Introduction}
The classification of equivalence relations according to their complexity is
a major research thread in logic. The following two examples stand out from
the existing literature.
\begin{itemize}
\item In descriptive set theory, one often deals with equivalence relations
    defined on Polish spaces (e.g., $2^\omega$ or $\omega^\omega$), which
    are classified in terms of \emph{Borel embeddings}. The corresponding
    theory is now a consolidated  field of modern descriptive set theory,
    which shows deep connections with topology, group theory,
    combinatorics, model theory, and ergodic theory  (see, e.g.,
    \cite{friedman1989borel,HKL,gao2008invariant,hjorth2010borel}).
\item On the other hand, in computability theory,  it is common to
    concentrate on equivalence relations on the natural numbers and compare
    their  algorithmic content in terms of \emph{computable reductions}
    (see, e.g.,
    \cite{Ershov:Book,Ershov:survey,Gao,Andrews-et-al-14,coskey2012hierarchy}).
\end{itemize}

Computable reducibility (for which we use the symbol $\leq_c$, and call
\emph{$c$-degrees} the elements of the corresponding degree structure)  has
been adopted to calculate the complexity of natural equivalence relations on
$\omega$, proving, e.g., that provable equivalence in Peano Arithmetic is
$\Sigma^0_1$ complete~\cite{Bernardi:83}, Turing equivalence on c.e. sets is
$\Sigma^0_4$ complete~\cite{IMNN-14}, and the isomorphism relations on
several familiar classes of computable structures (e.g., trees, torsion
abelian groups, fields of characteristic $0$ or $p$) are $\Sigma^1_1$
complete~\cite{FFH-12}. In parallel, there has been a growing interest in the
abstract study of the poset of degrees generated by computable reducibility
on the collection of equivalence relations of a certain complexity. Most
notably, the poset \textbf{Ceers} of the $c$-degrees of computably enumerable
equivalence relations (commonly known by the acronym \emph{ceers})  has been
thoroughly explored~\cite{Andrews-Sorbi-19,Andrews-Badaev-Sorbi}: e.g., it
has been recently shown that its first-order theory is as complicated as true
arithmetic~\cite{Andrews-et-al-20}. Less is known about larger structures of
$c$-degrees; but recent studies  considered the $\Delta^0_2$
case~\cite{Ng,Bazhenov-et-al} and the global structure $\mathbf{ER}$ of
\emph{all} $c$-degrees~\cite{andrews2021structure}.

Yet, despite its  classificatory power, computable reducibility has an
obvious shortcoming: it is simply too coarse for measuring the relative
complexity of computable equivalence relations. Indeed, it is easy to note
that any two computable equivalence relations $R$ and $S$ with infinitely
many classes are computably bi-reducible. A natural way to overcome this
limitation is by adopting \emph{feasible} reducibilities, as is done in
\cite{buss2011strong,gao2017polynomial}, where the authors prove that,
relatively to these reducibilities, the isomorphism relations of finite
fields, finite abelian groups, and finite linear orders have all the same
complexity.

This paper focuses on a subcollection of computable equivalence relations,
namely primitive recursive equivalence relations with domain $\omega$, called
\emph{punctual}. To classify punctual equivalence relations, we adopt
primitive recursive reducibility.  More precisely, a punctual equivalence
relation $R$ is \emph{$pr$-reducible} to a punctual equivalence relation $S$
(notation: $R\leq_{pr} S$) if there exists a primitive recursive function $f$
such that
\[
(\forall x, y\in\omega)[x \rel{R} y \Leftrightarrow f(x) \rel{S} f(y)].
\]
So, the main object of study of this paper is $\mathbf{Peq}$, i.e., the
degree structure of $pr$-degrees consisting of punctual equivalence
relations.

In the sequel, we hope to convince the reader that $\mathbf{Peq}$ is a
remarkable structure. For instance, in constrast with $\mathbf{Ceers}$ and
all other established structures of $c$-degrees, $\mathbf{Peq}$ is
surprisingly well-behaved, being a dense distributive lattice (Theorems
\ref{theor:lattice} and \ref{thm:density}). On the other hand, we also offer
evidence of the intricacy of $\mathbf{Peq}$, proving, e.g., that the
structure is neither rigid nor homogeneous (Theorems \ref{thm:nonrigidity}
and \ref{cor:finitenotdefinable}). Furthermore, working in a primitive
recursive setting will affect our proof strategies: as any primitive
recursive check converges, our constructions will be typically injury-free
and requirements will be solved one by one. That said, to build primitive
recursive objects, we won't need to worry about coding, and in fact we will
rely on a restricted form of Church-Turing thesis (see Remark \ref{rem:CTT}),
which will give to the paper more of a computability-theoretic flavour rather
than a complexi\-ty-the\-oretic one.

A final piece of motivation comes from the rapid emergence of \emph{online structure theory} (see, e.g., \cite{bazhenov2019foundations,DMN-survey,kalimullin2017algebraic,Mel-17,melnikov2020structure}), a subfield of computable structure theory
which deals with algorithmic situations in which unbounded search
is not allowed, that is formalized by focusing, e.g., on punctual presentations of structures, rather than computable ones.

\smallskip

The rest of the paper is organized as follows. In Section~\ref{sect:prelim},
we review background material. In Section~\ref{sect:normal-form}, we observe
a simple yet important fact: punctual equivalence relations have a normal
form. In the central sections of the paper, Sections~4-7, we prove that
$\mathbf{Peq}$ has several desirable features (which are unusual for degree
structures of equivalence relations): we show, in particular, that
$\mathbf{Peq}$ is a dense distributive lattice, in which each degree is
join-reducible and each degree below the top is meet-reducible. The last two
sections should convince the reader that, despite being surprisingly
well-behaved, $\mathbf{Peq}$ remains intricate: e.g., we prove that it
contains intervals which do not embed the diamond lattice; that it is neither
rigid nor homogeneous; and that it contains nonisomorphic lowercones. Several
questions remain open.

\section{Background, terminology, and notations} \label{sect:prelim}

We review some background material and terminology. For background on
computability theory, especially primitive recursive functions, the reader is
referred to \cite{odifreddi1992classical}.

\subsection{Equivalence relations}
As aforementioned, \emph{punctual} equivalence relations are primitive
recursive equivalence relations with domain $\omega$. Unless stated
otherwise, all our equivalence relations will be punctual. A
\emph{transversal} of an equivalence relation $R$ is any set $T$ such that $x
\cancel{\rel{R}} y$ for any distinct $x,y \in T$. The \emph{principal
transversal} $T_R$ of a given equivalence relation $R$ is the following
transversal,
\[
T_R:=\set{y \neq 0 : (\forall x < y)(x \cancel{R} y)}.
\]
Clearly, if $R$ is punctual, then $T_R$ is a primitive recursive set. As is
customary in theory of ceers, we denote by $\Id$ the identity on the natural
numbers. We often write $f: R \leq_{pr} S$ to mean that $f$ is a primitive
recursive reduction from $R$ to $S$.

\subsection{Finite equivalence relations}
We define an equivalence relation $R$ to be \emph{finite} if it has only
finitely many equivalence classes; $R$ is \emph{non-finite} otherwise. As in
the case of ceers~\cite{Gao} (but differently from the case of $\Delta^0_2$
equivalence relations~\cite{Bazhenov-et-al}), $\mathbf{Peq}$ has an initial
segment of order type $\omega$ consisting of the finite punctual equivalence
relations. More precisely, it is immediate to observe that
\[
\Id_1 <_{pr} \Id_2 <_{pr} \cdots <_{pr} \Id_{n} <_{pr}  \cdots,
\]
where $\Id_{n}$ is equality $\text{mod} \ n$.  Clearly each $\Id_{n}$ and
$\Id$ are punctual equivalence relations. In addition, any finite punctual
$R$ is $pr$-equivalent to $\Id_n$, for some $n \ge 1$, and $\Id_{n} \leq_{pr}
R$,  for every $n\ge 1$ and every non-finite punctual $R$.

\begin{remark}(Terminology)\label{rem:assumption}
In the rest of the paper, we shall assume that all our equivalence relations
are non-finite. Henceforth, by a \emph{punctual equivalence relation} we will
mean a primitive recursive relation with infinitely many equivalence classes.
Likewise, by a \emph{punctual degree} we will mean the $pr$-degree of a
punctual equivalence relation: of course, the equivalence relations lying in
a punctual degree are all punctual.
\end{remark}

\subsection{A listing of the primitive recursive
functions}\label{ssct:listing}

Throughout the paper we will refer to an effective listing $\{p_e\}_{e \in
\omega}$ of the primitive recursive functions, which can be found in many
textbooks: see e.g.~\cite{Hinman:Book} for a detailed definition of such a
listing. Let $T(e,x,z)$ and $U$ denote respectively Kleene's (primitive
recursive) predicate and a primitive recursive function such that for every
$e$, $\phi_{e}(x)=U(\mu z\,T(e,x,z))$ (where $\phi_{e}$ denotes the partial
computable function with index $e$ in the standard listing of the partial
computable functions), and let $p$ be a recursive function such that
$p_{e}=\phi_{p(e)}$: since $p$ comes from the $s$-$m$-$n$-theorem we may assume
that $p$ is primitive recursive. Let $V$ be the primitive recursive predicate
\[
V(e,x,y,s) \Leftrightarrow (\exists z < s)(T(p(e), x, z) \,\&\, U(z)=y):
\]
we will refer to $V(e,x,y,s)$ by saying that ``$p_e(x)$ has converged to $y$
in $< s$ steps'', and we will denote this by $p_e(x)[s]\!\downarrow =y$.
Similarly, it is primitive recursive to check whether ``$p_e(x)$ has
converged in $< s$ steps'' (denoted by $p_e(x) [s] \!\downarrow$), i.e.
$(\exists y < s)V(e,x,y,s)$.

\subsection{Binary strings}\label{ssct:strings}
We will use the standard notations and terminology about \emph{finite binary
strings}, which are the elements of the set $2^{<\omega}$. Let $\sigma, \tau$
be finite binary strings: we will denote by $l^{\sigma}$ the length of
$\sigma$; the concatenations of $\sigma$ and $\tau$ will be denoted by
$\sigma \widehat{\phantom{\alpha}} \tau$; if $i\in \{0,1\}$ is a number then
$\str{i}$ denotes the string of length $1$, consisting of the single bit $i$;
the symbol $\lambda$ denotes the empty string.

We will freely identify a set $X \subseteq \omega$ with its characteristic
function, thus viewing $X$ as a member of $2^{\omega}$. If $k \in \omega$ and
$X$ is a set, then the symbol $X\restriction  k$ denotes the initial segment
of $X$ (thus a member of $2^{<\omega}$) of length $k$. Given $\sigma \in
2^{<\omega}$ and a set $X$, let
\[
\sigma \ast X=
\begin{cases}
\sigma(i), &\text{if $i< l^{\sigma}$}\\
X(i-l^\sigma), &\text{otherwise}.
\end{cases}
\]

In analogy with the common usage for sets where $\overline{X}=\{x: X(x)=0\}$,
given a string $\sigma \in 2^{<\omega}$ we denote $\overline{\sigma}=\{i: i<
l^{\sigma} \ \&\ \sigma(i)=0\}$, and we let $\ZeS=\#(\overline{\sigma})$, i.e. the
number of $0$'s in the range of $\sigma$.

We will assume a ``primitive recursive coding'' of the binary strings, with
primitive recursive length function, projections, etc.

\begin{remark}\label{rem:CTT}
Throughout this paper we will often build primitive recursive functions or
primitive recursive sets. It is sometimes convenient to use the following
analogue of the Church-Turing thesis for primitive recursive functions:

Let $\varphi_e$ be the $e^{th}$ partial (general) computable function. Then
(in accordance with what stated in Section~\ref{ssct:listing}) every
primitive recursive function is equal to some $\varphi_e$ where the
computation for $\varphi_e$ runs in primitive recursively many steps. Thus,
to define a primitive recursive function $f$ (or a set), it is enough to
specify a general algorithm for computing $f(n)$ as long as the number of
steps taken to decide $f(n)$ is bounded by a primitive recursive function.

This helps to avoid overly formal and cumbersome definitions, since it is often easier to see that the time bound is primitive recursive.
\end{remark}

\section{The normal form theorem and some structural properties} \label{sect:normal-form}

In the literature about computable reducibility, it is common to use the
following way of encoding  sets of numbers by equivalence relations: given
$X\subseteq \omega$, one defines
\[
x R_X y \Leftrightarrow x,y\in X \text{ or $x=y$}.
\]
Equivalence relations of the form $R_X$ are called \emph{1-dimensional} in
\cite{Gao},  while $c$-degrees containing $1$-dimensional equivalence
relations are called \emph{set-induced} in \cite{Ng}. The interesting feature of
set-induced degrees is that they offer algebraic and logical information
about the overall structure of $c$-degrees: for example, in
\cite{Andrews-et-al-14} it is proved that the first-order theory of ceers is
undecidable, by showing that the interval $[\deg_c(\Id), \deg_c(R_K)]$ of the
$c$-degrees is isomorphic to the interval $[\mathbf{0}_1, \mathbf{0}'_1]$ of
the $1$-degrees, where $\mathbf{0}_1$ is the $1$-degree of an infinite and
co-infinite computable set and $\mathbf{0}'_1$ is the $1$-degree of the
halting set $K$.

Yet, set-induced degrees are far from exhausting the collection of all
$c$-degrees. In fact, if $R$ is an equivalence relation with  two
non-computable $R$-classes, then there is obviously no $X$ such that
$R\equiv_c R_X$. When dealing with punctual equivalence relations, the
situation changes: all $pr$-degrees are set-induced. Specifically, the next easy theorem shows that the punctual complexity of $R$ is
entirely encoded in its principal transversal $T_R$.

\begin{theorem}[Normal Form Theorem]\label{thm:normal-form}
For any punctual $R$, we have that
\[
R\equiv_{pr} R_{\overline{T_R}}.
\]
\end{theorem}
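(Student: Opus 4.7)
The plan is to exhibit explicit primitive recursive reductions in both directions, both very short. Both will exploit the fact that $T_R$ is a primitive recursive set (which in turn is immediate from the bounded definition $T_R = \{y\neq 0 : (\forall x<y)(x\cancel{R}y)\}$ and the primitive recursiveness of $R$).

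For $R \leq_{pr} R_{\overline{T_R}}$, I would take $f$ to be the \emph{least-representative} map, $f(x) := \mu y \leq x\,(x\rel{R} y)$. This is primitive recursive because it is a bounded minimization over the primitive recursive predicate $R$. Note that $f(x)$ lies in $T_R$ whenever $f(x) > 0$, and lies in $\overline{T_R}$ when $f(x) = 0$. To verify that $f$ reduces $R$ to $R_{\overline{T_R}}$: if $x \rel{R} y$ then $f(x) = f(y)$, so certainly $f(x)\rel{R_{\overline{T_R}}} f(y)$; conversely if $x\cancel{R}y$ then $f(x)\neq f(y)$, and since at most one of $f(x), f(y)$ can equal $0$, at least one of them is in $T_R$, so they lie in distinct $R_{\overline{T_R}}$-classes (either two different singletons of $T_R$, or one singleton of $T_R$ and the large class $\overline{T_R}$).

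For $R_{\overline{T_R}} \leq_{pr} R$, I would use the ``collapse everything outside $T_R$ to $0$'' map: set $g(x) := x$ if $x \in T_R$, and $g(x) := 0$ otherwise. Since $T_R$ is primitive recursive, $g$ is primitive recursive. To check it is a reduction: if $x\rel{R_{\overline{T_R}}} y$ then either $x=y$ (trivial) or both lie in $\overline{T_R}$, in which case $g(x)=g(y)=0$. If $x\cancel{R_{\overline{T_R}}}y$, then $x\neq y$ and at least one of them, say $x$, belongs to $T_R$; then $g(x)=x$, and $g(y)$ is either $y \in T_R$ (distinct least representative, so $x\cancel{R}y$) or $0$ (and $0 \cancel{R} x$ since $x\in T_R$ means $x>0$ and $x$ is $R$-inequivalent to every smaller number, in particular to $0$). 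Either way $g(x)\cancel{R} g(y)$.

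I don't expect any substantive obstacle: the theorem is essentially a translation of the observation that punctuality makes the choice of canonical representatives primitive recursive. The only tiny subtlety worth articulating is the asymmetric role of $0$ in the definition of $T_R$, which is why one has to handle $f(x)=0$ separately in the first reduction and why, in the second reduction, the fact that $x\in T_R$ forces $0\cancel{R}x$ is what makes the ``collapse to $0$'' idea work.
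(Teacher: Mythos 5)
Your proposal is correct and uses exactly the same two reductions as the paper: the least-representative map $f(x)=(\mu y\leq x)[y\rel{R}x]$ in one direction, and the map collapsing $\overline{T_R}$ to $0$ (and fixing $T_R$) in the other. The only difference is that you spell out the verification in more detail than the paper does.
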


\begin{proof}
It is immediate to see that the function  $f(x) = (\mu y\leq x) [y\rel{R}x]$ is
primitive recursive and reduces $R$ to  $R_{\overline{T_{R}}}$. On the other
hand, the following primitive recursive function $g$ reduces
$R_{\overline{T_R}}$ to $R$,
\[
g(x)=
\begin{cases}
0, &\text{if $x\in \overline{T_{R},}$}\\
x, &\text{otherwise}.
\end{cases}
\]
Hence, $R\equiv_{pr} R_{\overline{T_R}}$.
\end{proof}

So, whenever is needed, one can assume that a given punctual equivalence
relation is in normal form. Furthermore, the next lemma ensures that, if two
punctual equivalence relations $R$ and $S$ are in normal form and $R\leq_{pr}
S$, then there is a reduction from $R$ to $S$ that  ``respects'' the normal
form.

\begin{lemma}\label{lem:from-classone-to-classone}
If $R_X \leq_{pr} R_Y$, then there is a reduction $g$ from $R_X$ to $R_Y$ such that
$g[X] \subseteq Y$.
\end{lemma}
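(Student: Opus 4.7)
Given any PR reduction $f\colon R_X\leq_{pr} R_Y$, I would patch $f$ so that its restriction to $X$ takes values in $Y$. Under the normal-form convention behind this lemma (Theorem~\ref{thm:normal-form}), $0\in X\cap Y$, so $0$ is a canonical target inside $Y$.

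\textbf{Key observation.} Since $X$ is a single $R_X$-class, $f[X]$ is contained in the single $R_Y$-class $[f(0)]_{R_Y}$. If $f(0)\in Y$ then this class equals $Y$, so $f[X]\subseteq Y$ and we may take $g:=f$. Otherwise $f(0)\notin Y$, the class $[f(0)]_{R_Y}$ is the singleton $\{f(0)\}$, and $f$ is forced to be constant on $X$ with value $f(0)$.

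\textbf{Construction in the non-trivial case.} Assuming $f(0)\notin Y$, I would define
\[
g(x) = \begin{cases}
0 & \text{if } f(x) = f(0),\\
f(0) & \text{if } f(x)\rel{R_Y} 0 \text{ and } f(x)\neq f(0),\\
f(x) & \text{otherwise.}
\end{cases}
\]
Both guards are PR (the second uses a single $R_Y$-check), so $g$ is PR. The first clause fires exactly on $X$: $f$ is constant on $X$ with value $f(0)$, and conversely $f(x)=f(0)$ for $x\notin X$ would force $x\rel{R_X}0$ and hence $x\in X$. Thus $g[X]=\{0\}\subseteq Y$. The second clause fires only on the set $A:=\{x\notin X : f(x)\rel{R_Y}0\}$, and $|A|\leq 1$: two distinct $x,x'\in A$ would satisfy $f(x)\rel{R_Y}f(x')$ and hence $x\rel{R_X}x'$, contradicting $x,x'\notin X$. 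So the second clause affects at most one point $x^*$, sending it to $f(0)$, which lies in the singleton $R_Y$-class $\{f(0)\}$ distinct from $Y$; everywhere else $g=f$.

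\textbf{Reduction check and main obstacle.} Verifying $x\rel{R_X}x' \Leftrightarrow g(x)\rel{R_Y}g(x')$ is a routine case analysis over the location of $x,x'$ in $X$, $\{x^*\}$, or the remainder, using either the reduction property of $f$ away from $X\cup\{x^*\}$ or the elementary facts $0\in Y$ and $f(0)\notin Y$ to separate the new target classes. The only delicate step is the possibly-existing $x^*\in A$, whose original image in $Y$ would otherwise collide with the new $g[X]=\{0\}\subseteq Y$; the bound $|A|\leq 1$ and the primitive recursive detectability of membership in $A$ via the test $f(x)\rel{R_Y}0$ make the rerouting feasible without disturbing the primitive recursive complexity of $g$.
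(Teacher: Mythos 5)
Your proof is correct and essentially the paper's own argument: in the nontrivial case (where $f[X]$ lands in a singleton class $\{a\}$ with $a\notin Y$) you perform exactly the same swap --- send all of $X$ to a fixed element of $Y$ and reroute the at most one $x\notin X$ with $f(x)\in Y$ to $a=f(0)$ --- merely replacing the paper's direct membership tests by the equivalent primitive recursive guards $f(x)=f(0)$ and $f(x)\rel{R_Y}0$. The normalization $0\in X\cap Y$ is harmless (the paper instead just fixes an arbitrary $y\in Y$), and your explicit bound $|A|\leq 1$ is the same observation that underlies the paper's second case.
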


\begin{proof}
Let $f$ be a primitive recursive function reducing $R_X$ to $R_Y$ such that
$f[X] =\{a\}$ with $a \notin Y$. Fix $y \in Y$ and define
\[
g(x)=
\begin{cases}
y, &\text{if $x \in X$},\\
a, &\text{if $x \notin X$ and $f(x)\in Y$},\\
f(x), &\text{if $x \notin X$ and $f(x)\notin Y$}.
\end{cases}
\]
It is easy to see that $g$ is primitive recursive, maps $X$ to $Y$ and
reduces $R_X$ to $R_Y$.
\end{proof}

Another assumption that one can make without loss of generality is that a
$pr$-reduction between punctual equivalence relations is surjective on the
equivalence classes of its target. This contrasts with the case of ceers
where (see \cite{Andrews-Sorbi-19}) if $R,S$ are such that $R \leq_c S$ via a
computable function $f$ whose range hits all the equivalence classes of $S$,
then the reduction can be inverted, i.e., $S \leq_{c} R$ as well. This
inversion lemma fails for punctual equivalence relations, in fact:

\begin{lemma}\label{lem:inversionfail}
If $R\leq_{pr} S$ then there exists $g$ such that $g: R\leq_{pr} S$ and $g$
hits all the $S$-classes.
\end{lemma}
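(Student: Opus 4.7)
The plan is to use the normal form (Theorem~\ref{thm:normal-form}) together with Lemma~\ref{lem:from-classone-to-classone} to reduce the problem to a concrete question about 1-dimensional equivalence relations. I would first assume that $R = R_X$, $S = R_Y$, with $X = \overline{T_R}$ and $Y = \overline{T_S}$, and that the given reduction $f : R_X \leq_{pr} R_Y$ satisfies $f[X] \subseteq Y$. Writing $Z = \omega \setminus X$ and $W = \omega \setminus Y$ (both primitive recursive and infinite, and with $0 \in X \cap Y$ by the very definition of the principal transversal), the restriction $f|_Z$ is a primitive recursive injection of $Z$ into $W$. The task then reduces to constructing a primitive recursive $g$ that is constant on $X$ with value in $Y$ and whose restriction to $Z$ is a \emph{bijection} onto $W$: such a $g$ is automatically a reduction hitting every $S$-class.

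The main obstacle is that the principal function of $W$ need not be primitive recursive (a primitive recursive infinite set can have an arbitrarily sparse enumeration), so one cannot na\"ively list $W$ in increasing order within primitive recursive time. The key idea is to use $f$ itself to supply the missing bound. Set $\rho(z) := |Z \cap [0, z]|$ and $F(z) := \max_{x \leq z} f(x)$, both clearly primitive recursive. Since $f|_Z$ is injective into $W$, the image $f[Z \cap [0, z]]$ is a subset of $W \cap [0, F(z)]$ of size $\rho(z)$; hence $|W \cap [0, F(z)]| \geq \rho(z)$. In particular, the $\rho(z)$-th smallest element $w_{\rho(z)}$ of $W$ is at most $F(z)$ and can therefore be produced by a bounded primitive recursive search through $W \cap [0, F(z)]$.

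With this in hand, I would define
\[
g(x) = \begin{cases} 0 & \text{if } x \in X,\\ w_{\rho(x)} & \text{if } x \in Z. \end{cases}
\]
Primitive recursiveness of $g$ is immediate, since $X$ is primitive recursive and each ingredient of the $Z$-case (the rank, the bound, the bounded search) is primitive recursive. Checking that $g$ is a reduction hitting all $S$-classes is then routine: elements of $X$ land in $Y$, hitting it; the map $z \mapsto \rho(z)$ is strictly increasing on $Z$ and $i \mapsto w_i$ is strictly increasing, so $g|_Z$ is an injection into $W$ and distinct $R_X$-singleton classes go to distinct $R_Y$-singleton classes; and $z \mapsto w_{\rho(z)}$ is the order-preserving bijection from $Z$ onto $W$, so every singleton $R_Y$-class $\{w\}$ with $w \in W$ is hit as well.
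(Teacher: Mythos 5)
Your proof is correct and follows essentially the same route as the paper: normalize via Theorem~\ref{thm:normal-form} and Lemma~\ref{lem:from-classone-to-classone}, send $X$ to a fixed point of $Y$, and send $\overline{X}$ to $\overline{Y}$ by the order-preserving bijection, with primitive recursiveness secured by the bound $\max\{f(i):i\leq x\}$ coming from injectivity of $f$ on $\overline{X}$. The paper phrases the order-preserving bijection via a course-of-values recursion with a bounded $\mu$-search and keeps $g=f$ on $X$, whereas you phrase it via the rank function and the $n$-th element of $\overline{Y}$ with $g\equiv 0$ on $X$; these are cosmetic variants of the same argument.
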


\begin{proof}
Let $R\leq_{pr} S$ via $f$: by Theorem~\ref{thm:normal-form} we may assume
that $R=R_X$ and $S=R_Y$ for some pair of primitive recursive sets and by
Lemma~\ref{lem:from-classone-to-classone} we may assume that $f$ maps $X$ to
$Y$. Define
\[
g(x)=
\begin{cases}
f(x), &\text{if $x \in X$},\\
\mu y\,[y \leq \max\{f(i): i\leq x\} \,\&\, y \in \overline{Y}
               \smallsetminus \{g(i) : i < x\}],
&\text{if $x \notin X$}.
\end{cases}
\]
Then $g$ gives the reduction and is onto $\overline{Y}$: to show primitive
recursiveness of $g$ we use the fact that if $x$ is the $n$-th element in
$\overline{X}$, then the $n$-th element of $\overline{Y}$ in order of
magnitude is $\leq \max\{f(i): i \leq x\}$ by injectivity of $f$ on
$\overline{X}$.
\end{proof}

\begin{remark}\label{rem:nondecreasing}
Note that the function $g: R_{X} \leq_{pr} R_Y$ constructed in the last lemma
is nondecreasing on $\overline{X}$, i.e., if $x<y$ and $x,y\in \overline{X}$
then $g(x)<g(y)$.
\end{remark}

We may comment on the results presented so far by saying that investigating
punctual equivalence  relations under $\leq_{pr}$ turns out to be the same as
investigating primitive recursive sets under a primitive recursive
reducibility that is required to be bijective on the complements of the sets.
In the rest of the paper, we will take advantage of this correspondence by
often constructing, instead of a full punctual equivalence relation $R$, only
a primitive recursive set $Y$ corresponding to its \emph{main class} in
normal form (i.e.\ $R \equiv_{pr} R_Y$).

\section{Incomparability of punctual degrees}
In this and the following section, we tackle some of the most natural
questions that one can formulate about a new degree structure.

\subsection{The greatest  punctual degree}

We begin by proving that there is a greatest punctual degree.
\begin{proposition}
$\mathbf{Peq}$ has a greatest element. In fact, for all punctual $R$,
$R\leq_{pr} \Id$.
\end{proposition}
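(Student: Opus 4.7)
The plan is simply to exhibit an explicit primitive recursive reduction $f : R \leq_{pr} \Id$ for an arbitrary punctual $R$. The canonical choice is the \emph{least-representative} function
\[
f(x) = (\mu y \leq x)[y \rel{R} x].
\]
Since $R$ is primitive recursive, this bounded minimization is primitive recursive as well, so $f$ is primitive recursive. By construction $f(x)$ is the minimum element of the $R$-class of $x$, hence $f(x) = f(y)$ if and only if $x$ and $y$ lie in the same $R$-class, which is exactly $f(x) \rel{\Id} f(y)$.

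A variant, which exploits the machinery of Section~\ref{sect:normal-form}, is to invoke the Normal Form Theorem and reduce the problem to showing $R_X \leq_{pr} \Id$ for an arbitrary primitive recursive set $X$. In that setting one can define $f(x) = 0$ if $x \in X$ and $f(x) = x+1$ otherwise: this is primitive recursive because $X$ is, it collapses all of $X$ to $0$, and it sends distinct points outside $X$ to distinct positive values, giving precisely $x \rel{R_X} y \Leftrightarrow f(x) = f(y)$.

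There is no substantial obstacle here; the only genuine observation is that passing from ``least element of the $R$-class that is $\leq x$'' to a primitive recursive function is legitimate because $R$ being primitive recursive allows bounded search. The proposition is thus essentially a sanity check that $\Id$ sits on top of $\mathbf{Peq}$, mirroring the fact that no punctual equivalence relation can be more complex than the one that separates every pair of distinct natural numbers.
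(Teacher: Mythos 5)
Your proposal is correct and uses essentially the same reduction as the paper: the paper's proof just invokes the function $f(x) = (\mu y\leq x)[y\rel{R}x]$ from the Normal Form Theorem, which is exactly your least-representative map, noting that it sends each $R$-class to a singleton. The variant via $R_X$ is also fine but unnecessary.
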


\begin{proof}
Given $R$,  let $f$ be the primitive recursive function from the proof of the
Normal Form Theorem (i.e., the function reducing $R$ to
$R_{\overline{T_R}})$. Note that $f$ is also a reduction from $R$ to $\Id$,
as it maps equivalence classes to singletons.
\end{proof}

The punctual degree of $\Id$ contains many natural examples of equivalence
relations. For example, in the literature about polynomial-time reducibility
(see, e.g., \cite{buss2011strong}) researchers considered the isomorphism relations of
familiar classes of \emph{finite} structures, such as graphs, groups, trees,
linear orders, Boolean algebras, and so forth. It is not difficult to see
that all these relations turn out to be $pr$-equivalent to $\Id$. Consider
for instance ${\mathrm{GI}}$, the isomorphism relation between finite graphs.  On
one hand, the problem of deciding whether two finite graphs are isomorphic is
primitive recursive (in fact, it belongs to $\mathbf{NP}$). On the other hand, a $pr$-reduction from $\Id$ to ${\mathrm{GI}}$ is readily obtained by assigning, to each $n$, the empty graph on the domain $\{i: i<n\}$. Hence, $\mathrm{GI}$ is $pr$-equivalent to $\Id$.

Anyway, the fact that the punctual degree of $\Id$ is large does not come as a
surprise. In fact, to construct a computable function which is not primitive
recursive requires non-trivial work (recall Ackermann's famous
construction~\cite{Ackermann}). And we show next that a punctual equivalence
relation $R$  lies strictly below $\Id$ only if, when presented in normal
form, the set of its singletons cannot be enumerated in a primitive recursive
way without repetitions.

To be more precise, let us introduce first the following  analogue of
immunity for primitive recursive sets.

\begin{emdef}\label{def:presets}
A set $X\subseteq \omega$ is \emph{primitive recursively enumerable}
(abbreviated by \emph{p.r.e.}) if $X$ is the range of an injective primitive
recursive function. $X$ is \emph{primitive recursively immune} (abbreviated
by \emph{p.r.-immune}) if $X$ is infinite and it has no infinite p.r.e.\
subset.
\end{emdef}

\begin{remark}
Note that we define a set as p.r.e.\ only if it  has a primitive recursive
enumeration which is \emph{injective}. Without injectivity one would obtain
all c.e.\ sets: it follows easily from Kleene's Normal Form Theorem that any
c.e.\ set can be enumerated by a primitive recursive function. In contrast,
Theorem~\ref{prop:belowid} shows that the p.r.e.\ sets (as just defined) form
a proper subclass of the c.e.\ sets, and in fact even of the primitive
recursive sets.
\end{remark}

\begin{proposition} \label{prop:topcrit}
If $X$ is any set of numbers then $\Id\not\leq_{pr} R_{X}$ if and only if
$\overline{X}$ is p.r.-immune.
\end{proposition}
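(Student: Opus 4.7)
The plan is to read off both directions of the biconditional from the shape of $R_X$: its equivalence classes are the single (possibly empty) class $X$ together with the singletons $\{y\}$ for $y \in \overline{X}$. A primitive recursive function $f : \Id \leq_{pr} R_X$ is therefore exactly a primitive recursive function that is injective on $\omega$ and whose image contains at most one element of $X$, since two distinct inputs both landing in $X$ would give $R_X$-equivalent outputs.

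For the direction ($\Leftarrow$) I would argue by contraposition. Suppose $f : \Id \leq_{pr} R_X$. Either every value of $f$ lies in $\overline{X}$, in which case $f$ itself is an injective primitive recursive enumeration of an infinite subset of $\overline{X}$; or else there is a unique $x_0$ with $f(x_0) \in X$, and then the primitive recursive function
\[
h(n) = \begin{cases} f(n), & \text{if } n < x_0, \\ f(n+1), & \text{if } n \geq x_0, \end{cases}
\]
is an injective primitive recursive enumeration of $\range(f) \setminus \{f(x_0)\} \subseteq \overline{X}$. Either way $\overline{X}$ fails to be p.r.-immune. A mild point worth noting is that we do \emph{not} need to decide primitive recursively which of the two cases holds; we only need \emph{one} of $f$ and $h$ to be a valid witness, which is enough to contradict p.r.-immunity of $\overline{X}$.

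For the direction ($\Rightarrow$) I would again argue contrapositively, and the argument is immediate. If $\overline{X}$ is not p.r.-immune then, under the standing convention of the paper that our equivalence relations are non-finite (so $\overline{X}$ is infinite), $\overline{X}$ contains an infinite p.r.e.\ subset enumerated injectively by some primitive recursive $g$. Such a $g$ is itself a reduction $\Id \leq_{pr} R_X$: for $x \neq y$ the values $g(x) \neq g(y)$ both lie in $\overline{X}$, hence are not $R_X$-related. No serious obstacle arises anywhere in the proof; the only delicate move is the fixed-index surgery in the $(\Leftarrow)$ direction, which is legitimate because $x_0$ is a concrete number hard-wired into $h$, not something we need to locate by a primitive recursive search.
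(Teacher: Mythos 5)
Your proof is correct and follows essentially the same route as the paper: both directions are argued contrapositively, with an injective primitive recursive listing of a p.r.e.\ subset of $\overline{X}$ giving the reduction in one direction, and the range of a putative reduction $f$ (minus its at most one value in $X$) giving the p.r.e.\ subset in the other. Your explicit construction of $h$ by hard-wiring the exceptional index $x_0$ just spells out a detail the paper leaves implicit when it asserts that $\range(f)\cap\overline{X}$ is p.r.e.
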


\begin{proof}
$(\Rightarrow)$: If $\overline{X}$ contains a p.r.e.\ set $A$, then $\Id
\leq_{pr} R_X$ via any primitive recursive function that injectively lists
$A$.

$(\Leftarrow)$:  If $\Id \leq_{pr} R_{X}$ via some primitive
recursive function $f$,  then, with the exception of at most one element, the
range of $f$ is contained in $\overline{X}$.  Therefore, $\range(f)\cap
\overline{X}$ is an infinite p.r.e.\ set showing that $\overline{X}$ is not
p.r.-immune.
\end{proof}

\begin{theorem}\label{prop:belowid}
There exists a primitive recursive set $Y$ whose complement is p.r.-immune.
\end{theorem}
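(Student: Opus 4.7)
The idea is to take $Y$ to be the complement of the ``Ackermann diagonal''. Concretely, let $\mathcal{A}(m,n)$ denote the usual two-variable Ackermann function, set
\[
Z := \{\mathcal{A}(n,n) : n \in \omega\}, \qquad Y := \omega \setminus Z,
\]
and argue that this $Y$ satisfies the conclusion of the theorem. The whole plan rests on the classical fact that the diagonal $n \mapsto \mathcal{A}(n,n)$ eventually dominates every primitive recursive function, while its ``capped'' version is still primitive recursive.

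The first (and main technical) step is to check that $Z$ is primitive recursive. The trick is that even though $n \mapsto \mathcal{A}(n,n)$ is not primitive recursive, the bounded version
\[
\widetilde{\mathcal{A}}(n,B) := \min\{\mathcal{A}(n,n),\, B+1\}
\]
is primitive recursive in $(n,B)$: capping every intermediate value of Ackermann's nested recursion at $B$ forces the whole computation to terminate within primitive recursively many steps. To decide whether $m \in Z$, one then iterates $n=0,1,2,\ldots$ computing $\widetilde{\mathcal{A}}(n,m)$ until the value exceeds $m$, checking along the way whether it equals $m$. The number of iterations needed is $\leq \log^\ast m + 1$, and each iteration is uniformly primitive recursive, so $\chi_Z$ (hence $\chi_Y$) is primitive recursive. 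Also $Z$ is clearly infinite, since $n \mapsto \mathcal{A}(n,n)$ is strictly increasing.

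The heart of the argument is then to show $Z$ has no infinite p.r.e.\ subset. Suppose, for a contradiction, that $W \subseteq Z$ is infinite and $W = \range(f)$ for some injective primitive recursive $f$. Each $f(k) \in Z$ has a unique preimage $n_k$ with $\mathcal{A}(n_k,n_k) = f(k)$; by the primitive recursiveness of the (bounded) inverse Ackermann together with $f$, the map $k \mapsto n_k$ is itself primitive recursive, and it is injective since $f$ is. A pigeonhole argument (if $n_k < k$ for all $k \leq N$, then the $N+1$ distinct values $n_0,\ldots,n_N$ lie in $\{0,\ldots,N-1\}$) yields $n_k \geq k$ for infinitely many $k$, and for each such $k$, monotonicity of Ackermann gives $f(k) = \mathcal{A}(n_k,n_k) \geq \mathcal{A}(k,k)$. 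But $\mathcal{A}(k,k)$ dominates every primitive recursive function of $k$, contradicting primitive recursiveness of $f$. Hence no such $W$ exists, so $\overline{Y} = Z$ is p.r.-immune.

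The main obstacle is cleanly establishing that $Z$ is primitive recursive, i.e., that the capped Ackermann $\widetilde{\mathcal{A}}$ really does fit inside some Grzegorczyk class despite the non-primitive recursiveness of $\mathcal{A}(n,n)$ itself; the rest is a short domination argument.
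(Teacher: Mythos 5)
Your proof is correct, but it takes a genuinely different route from the paper's. The paper runs a dynamic, simple-set-style construction: it builds $Y=\bigcup_s\sigma_s$ in stages, cycling through requirements $P_e$ that force $\range(p_e)\cap Y\neq\emptyset$ for every injective $p_e$ in an effective listing of the primitive recursive functions, interleaved with steps that keep $\overline{Y}$ infinite. You instead exhibit an explicit static example, $\overline{Y}=\{\mathcal{A}(n,n):n\in\omega\}$, and reduce immunity to the classical domination property of the Ackermann diagonal. All the steps check out: the graph of $\mathcal{A}$ is primitive recursive (your capped $\widetilde{\mathcal{A}}$ is one standard way to see this), so membership in $Z$ is decided by a search bounded by $n\le m$ (your $\log^\ast$ remark is a harmless refinement); the diagonal is strictly increasing, so $Z$ is infinite and the preimage map $k\mapsto n_k$ is well defined and injective; and your pigeonhole argument gives $n_k\ge k$ infinitely often, whence $f(k)\ge\mathcal{A}(k,k)$ infinitely often, contradicting the fact that every primitive recursive $f$ satisfies $f(k)<\mathcal{A}(k,k)$ for almost all $k$. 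What each approach buys: yours is shorter, needs no effective listing of the primitive recursive functions, and produces a concrete natural witness; the paper's construction is deliberately chosen as a gentle template for the cycle-opening/closing machinery reused in all the later, more intricate theorems, which is why the authors present it even though an Ackermann-style example would suffice for this particular statement.
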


\begin{proof}
We construct $Y$ in stages by approximating its characteristic function,
i.e., $Y=\bigcup_{s\in\omega}\sigma_s$, where $l^{\sigma_s}=s$. The
construction of $Y$ is similar to that of a simple set and is rather
straightforward. We describe it in some detail to let the reader familiarize themselves
with the sort of machinery that we employ in more intricate
constructions.

\smallskip

We aim at satisfying the following requirements:
 \begin{align*}
  &P_{e}: \text{if $p_e$ is injective, then $\range(p_e)\cap Y\neq \emptyset$},\\
  &M: Y \text{ is co-infinite},
\end{align*}
where  $\set{p_e}_{e\in\omega}$ is a computable list of all primitive
recursive functions, as in Section~\ref{ssct:listing}.

The strategy for a $P_e$-requirement works as follows. During the so-called
``$P_e$-cycle'' we enlarge the set $Y$ (by setting $\sigma_{s+1} =
\catt{\sigma_s}{1}$ at the current stage $s$) until we see that one of the
following conditions holds: either the function $p_e$ is not injective, or we
have $p_e(z) \in Y$ for some $z$. Let $s_0+1$ be the first stage at which we
witness such a situation. We set $\sigma_{s_0+1} = \catt{\sigma_{s_0}}{0}$,
close the $P_e$-cycle and move to satisfying the $P_{e+1}$-strategy, by
opening the $P_{e+1}$-cycle.

\subsubsection*{The construction}
At the beginning of any nonzero stage of the construction we assume that
there exists exactly one open $P$-cycle. Section~\ref{ssct:listing} will
guarantee that the various checks involving $p_e$-computations and their
convergence are primitive recursive.

\subsubsection*{Stage $0$}
Define $\sigma_0 = \lambda$; \emph{open the $P_0$-cycle}. Thus at the beginning
of the next stage, there will be exactly one open $P$-cycle, namely the
$P_{0}$-cycle.

\subsubsection*{Stage $s+1$}
Assume that the $P_e$-cycle is the currently open cycle. We distinguish three
cases:
\begin{enumerate}
	\item There are $l,m \leq s$ such that $p_e(l) [s] \!\downarrow\ =
p_e(m) [s] \!\downarrow$: if so, \emph{close the $P_e$-cycle}, define
$\sigma_{s+1}=\catt{\sigma_s}{0}$, and \emph{open the $P_{e+1}$-cycle}.
\item There is $m\leq s$ such that $p_e(m) [s] \!\downarrow\ =z$ and
    $\sigma_s(z)\!\downarrow\ =1$: do the same as in $(1)$.
\item Otherwise: keep the $P_e$-cycle open and define $\sigma_{s+1}=
    \catt{\sigma_s}{1}$.
\end{enumerate}
Again it is immediate to see that the next stage will inherit from this stage
exactly one open $P$-cycle.

\subsubsection*{The verification} The verification relies on the following lemmas.

\begin{lemma}
Every $P_{e}$-cycle is eventually opened and is later closed forever.
\end{lemma}
\begin{proof}
Notice that the $P_{e}$-cycle is opened at stage $s$ if and only if $e=0$ and
$s=0$, or $e>0$ and we close at $s$ the $P_{e-1}$-cycle.

So assume by induction that the lemma is  true of every $i<e$: thus there
exists a unique $s_{0}$ at which we open the $P_{e}$-cycle ($s_0=0$ if $e=0$,
or otherwise $s_0$ is the stage at which we close the $P_{e-1}$-cycle). If
the $P_{e}$-cycle does not satisfy the claim then the construction implies
the following: the function $p_e$ is injective and $\sigma_{s+1}=
\catt{\sigma_s}{1}$ for all $s\geq s_0+1$. Therefore since $p_e$ is injective
and $Y$ is cofinite, there would be infinitely many elements $m$ with
$p_e(m)\in Y$. Thus, at some stage $s_1+1 \geq s_0+1$, the $P_e$-cycle would
be closed by the item (2) of the construction, and never opened again. We
obtain a contradiction, showing that the $P_{e}$-cycle satisfies the claim.
\end{proof}

\begin{lemma}
$Y$ is primitive recursive and co-infinite.
\end{lemma}
\begin{proof}
It is enough to observe that for all $s$, the value of $\sigma_{s}$ is
decided at stage $s$, and therefore the function $s\mapsto \sigma_s$ is
primitive recursive. Moreover, it is immediate to check that
$l^{\sigma_s}=s$, for every $s$. Hence, $Y=\bigcup_{s\in \omega} \sigma_s$ is
primitive recursive, as $Y(s)=\sigma_{s+1}(s)$.

Since every $P_e$-cycle eventually closes, there are infinitely many stages
$s$ with $\sigma_{s+1} = \catt{\sigma_s}{0}$. This implies that the set
$\overline{Y}$ is infinite.
\end{proof}

\begin{lemma}
All $P$-requirements are satisfied.
\end{lemma}

\begin{proof}
Suppose that $p_e$ is an injective function. Consider the stage $s_0$ at
which the $P_e$-cycle closes. Clearly, the closure of the $P_e$-cycle is
triggered by item~(2). Thus, there is a number $m\leq s_0$ with
$\sigma_{s_0}(p_e(m))\!\downarrow\ = 1$. Hence, we have $p_e(m) \in Y$ and
$\range(p_e) \cap Y \neq \emptyset$.
\end{proof}

Theorem~\ref{prop:belowid} is proved.
\end{proof}

Combining the last three propositions we immediately obtain that there exists
a punctual degree strictly below the identity.

\begin{corollary}
There exists a punctual $R$ such that $R <_{pr} \Id$.
\end{corollary}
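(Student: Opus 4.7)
The plan is to assemble the corollary directly from the three immediately preceding results, with essentially no further work required. First I would invoke Theorem~\ref{prop:belowid} to fix a primitive recursive set $Y$ whose complement $\overline{Y}$ is p.r.-immune, and then take $R := R_{Y}$ as the witnessing equivalence relation. Since $Y$ is primitive recursive, $R_Y$ is primitive recursive; moreover $\overline{Y}$ is infinite (infiniteness is built into the definition of p.r.-immunity), so each element of $\overline{Y}$ forms its own $R_Y$-class and $R_Y$ has infinitely many classes. Hence $R_Y$ qualifies as a punctual equivalence relation in the sense of Remark~\ref{rem:assumption}.

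For the strict inequality, I would argue in two halves. The inequality $R_Y \leq_{pr} \Id$ is immediate from the greatest-element proposition at the head of this section, which already states that every punctual equivalence relation reduces to $\Id$. For the non-reducibility $\Id \not\leq_{pr} R_Y$, I would cite Proposition~\ref{prop:topcrit}: the left-to-right direction there, applied to $X = Y$, says precisely that p.r.-immunity of $\overline{Y}$ forces $\Id \not\leq_{pr} R_Y$. Combining the two directions gives $R_Y <_{pr} \Id$, as required.

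There is no genuine obstacle at this step; all the substantive content lies in Theorem~\ref{prop:belowid}, where the diagonalization against primitive recursive injective listings was performed. The only care needed in the corollary itself is bookkeeping: verifying that the set $Y$ produced in Theorem~\ref{prop:belowid} really does meet the hypothesis of Proposition~\ref{prop:topcrit} (it does, by construction) and that $R_Y$ sits inside $\mathbf{Peq}$ under the running non-finiteness convention (it does, since $\overline{Y}$ is infinite). So the proof reduces to a one- or two-sentence citation chain.
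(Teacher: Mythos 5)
Your proof is correct and is exactly the paper's argument: the paper likewise obtains the corollary by combining Theorem~\ref{prop:belowid} with Proposition~\ref{prop:topcrit} and the greatest-element proposition. The extra bookkeeping you include (checking that $\overline{Y}$ infinite makes $R_Y$ non-finite) is a reasonable addition the paper leaves implicit.
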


\subsection{Counterexamples to reducibilities}\label{ssct:counterexample}

In the rest of the paper, we will often need to build a punctual $S$ such
that, for a given $R$, we have $R \nleq_{pr} S$. To do so, we construct a primitive
recursive set $Y$ such that for every $e$, the requirement
\[
P_e:  p_e \mbox{ does not reduce $R$ to $R_{Y}$},
\]
is satisfied, and thus $S=R_{Y}$ is our desired equivalence relation.
Typically, we construct an increasing sequence $\{\sigma_{s}: s \in \omega\}$
of strings in $2^{<\omega}$ so that $Y=\bigcup_{s} \sigma_{s}$.

At a stage
$s+1$ of the construction we say that  \emph{$p_{e}$ shows a counterexample
to $R\leq_{pr} R_Y$} if there exist $l,m \leq s$ so that
$p_{e}(l)[s]\downarrow$, $p_{e}(m)[s]\downarrow$, and $p_{e}(l)[s], p_{e}(m)[s] <
l^{\sigma_{s}}$, and
\[
l \rel{R} m \Leftrightarrow (\sigma_{s}(p_{e}(l))\ne
\sigma_{s}(p_{e}(m)))  \text{ or } (p_e(l)\neq p_e(m) \,\&\,
\sigma_{s}(p_{e}(l)) = \sigma_{s}(p_{e}(m)) = 0).
\]
A counterexample will guarantee, for the final $Y$, that
\[
l \rel{R} m \Leftrightarrow p_{e}(l) \, \cancel{\rel{R_Y}}\, p_{e}(m),
\]
as desired. For a given $\sigma_{s}$, primitive recursiveness of $R$ and
Section~\ref{ssct:listing} will guarantee that checking if a counterexample
is being shown is primitive recursive.

Similarly, if for every $e$ we want to satisfy the requirement
\[
Q_e:  p_e \mbox{ is not a reduction from $R_{Y}$ to $R$},
\]
at a stage $s+1$ of the construction we say that  \emph{$p_{e}$ shows a
counterexample to $R_Y\leq_{pr} R$} if there exist $l,m < l^{\sigma_{s}}$ so
that $l\neq m$, $p_{e}(l)[s]\downarrow$, $p_{e}(m)[s]\downarrow$, and
\[
\sigma_{s}(l)=\sigma_{s}(m)=1 \Leftrightarrow p_{e}(l) \,\cancel{\rel{R}}\, p_{e}(m):
\]
a counterexample guarantees for the final $Y$ that $R_Y \nleq_{pr} R$.

Finally, sometimes we build two primitive recursive sets $X,Y$ in the form
$X=\bigcup_{s} \sigma^X_{s}$, and $Y=\bigcup_{s} \sigma^Y_{s}$. At a stage
$s+1$ of the construction we say that  \emph{$p_{e}$ shows a counterexample
to $R_X\leq_{pr} R_Y$} if there exist $l,m < l^{\sigma^X_{s}}$ so that $l\neq m$,
$p_{e}(l)[s]\downarrow$, $p_{e}(m)[s]\downarrow$, and $p_{e}(l)[s], p_{e}(m)[s] <
l^{\sigma^Y_{s}}$, and
\begin{multline*}
\sigma^X_{s}(l)= \sigma^X_{s}(m)=1 \Leftrightarrow (\sigma^Y_{s}(p_{e}(l))
\ne \sigma^Y_{s}(p_{e}(m))) \text{ or }\\
(p_e(l)\neq p_e(m) \,\&\, \sigma^Y_{s}(p_{e}(l)) = \sigma^Y_{s}(p_{e}(m)) = 0).
\end{multline*}

\subsection{Incomparability}\label{ssct:incomparability}

Among non-finite punctual equivalence relations, there is no least punctual
degree. In fact, $\mathrm{Id}$ is the only  punctual equivalence relation which is
non-finite and $pr$-comparable with all other punctual equivalence
relations.

The next result will be a consequence also of Theorem~\ref{thm:density} and
Theorem~\ref{thm:density-plus-incomparability} (to be discussed later). However, it may be
useful to give a direct proof in order to introduce the
incomparability strategy, which will be exploited in other constructions.

\begin{theorem}\label{thm:incomparability}
For any punctual $R<_{pr} \Id$, there is punctual $S$ such that $S|_{pr} R$.
\end{theorem}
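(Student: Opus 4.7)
The plan is to use the Normal Form Theorem to assume $R = R_X$ for a primitive recursive set $X$. Since $R<_{pr}\Id$, Proposition~\ref{prop:topcrit} tells us that $\overline{X}$ is p.r.-immune, and this is the only way in which the hypothesis will enter the proof. I will build a primitive recursive set $Y$ by stages $\set{\sigma_s: s\in\omega}$ with $l^{\sigma_s}=s$ and take $S:=R_Y$. The requirements to satisfy, for each $e\in\omega$, are
\[
P_e: p_e \text{ does not reduce } R_X \text{ to } R_Y, \qquad Q_e: p_e \text{ does not reduce } R_Y \text{ to } R_X,
\]
together with making $Y$ and $\overline{Y}$ both infinite (so that $R_Y$ is non-finite, hence punctual). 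The requirements will be tackled one at a time, in the order $P_0,Q_0,P_1,Q_1,\ldots$, each inside its own cycle.

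The design is symmetric. Inside a $P_e$-cycle, I extend by setting $\sigma_{s+1}=\catt{\sigma_s}{1}$ at every step and, at each stage, check whether the current $\sigma_s$ witnesses a counterexample to $R_X\leq_{pr} R_Y$ via $p_e$ in the sense of Section~\ref{ssct:counterexample}; when one is found, the cycle closes by appending a single $0$ and the next cycle opens. Inside a $Q_e$-cycle, dually, I extend by $\catt{\sigma_s}{0}$ and wait for a counterexample to $R_Y\leq_{pr} R_X$ via $p_e$, then close by appending a single $1$. Assuming every cycle closes, the closing bits guarantee that $Y$ and $\overline{Y}$ are both infinite.

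The crux of the verification is to show that every cycle does close in finitely many stages. For a $P_e$-cycle opened at stage $s_0$, if it never closed then its infinite extension by $1$'s would force $\overline{Y}\subseteq [0,s_0)$, so $R_Y$ would have only finitely many equivalence classes; but $R_X$ has infinitely many classes, and pigeonhole on representatives of distinct $R_X$-classes produces a pair $(l,m)$ with $l\,\cancel{\rel{R_X}}\,m$ and $p_e(l)\rel{R_Y}p_e(m)$, a counterexample that becomes visible at some finite stage. For a $Q_e$-cycle, if it never closed then the infinite extension by $0$'s would make $Y$ finite and $\overline{Y}$ cofinite, hence listable by an injective primitive recursive function $\beta$; a would-be reduction $p_e\colon R_Y\leq_{pr} R_X$ would force $p_e\restriction\overline{Y}$ to be injective into $\overline{X}$ with at most one exceptional value, so $p_e\circ\beta$ (after skipping the at most one bad index) would produce an infinite p.r.e.\ subset of $\overline{X}$, contradicting its p.r.-immunity. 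Hence a counterexample must appear at a finite stage here too.

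I expect the $Q_e$-cycle verification to be the main obstacle, since this is the only step where the hypothesis $R<_{pr}\Id$ is genuinely used: the pigeonhole argument that dispatches the $P_e$-cycles has no direct analogue in the opposite direction, and only the p.r.-immunity of $\overline{X}$ rules out the potential reduction in the limit. Once cycle closure is established, Remark~\ref{rem:CTT} ensures that $Y$ is primitive recursive, the alternating closing bits make $Y$ and $\overline{Y}$ both infinite, and therefore $S:=R_Y$ is a punctual equivalence relation with $S\mid_{pr} R$, as required.
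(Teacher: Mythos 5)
Your proposal is correct and follows essentially the same route as the paper: the same $P_e/Q_e$ requirements handled one cycle at a time, appending $1$'s while waiting for a counterexample to $R_X\leq_{pr}R_Y$ and $0$'s while waiting for one to $R_Y\leq_{pr}R_X$, with cycle closure forced in the limit by $R_X$ having infinitely many classes (for $P_e$) and by $R<_{pr}\Id$ (for $Q_e$). The only cosmetic differences are that you close a $Q_e$-cycle with a $1$ to keep $Y$ infinite (harmless but unnecessary), and that in the $Q_e$-verification you inline Proposition~\ref{prop:topcrit} via the p.r.-immunity of $\overline{X}$, where the paper simply concludes $\Id\leq_{pr}R$ and cites the hypothesis.
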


\begin{proof}
Given $R<_{pr}\Id$, we build by stages a primitive recursive set $Y$ such
that $S=R_Y$ satisfies the claim. The requirements to be satisfied are
\begin{align*}
P_e: p_e \mbox{ is not a reduction from $R$ to $R_Y$},\\
Q_e: p_e \mbox{ is not a reduction from $R_Y$ to $R$}.
\end{align*}

\subsubsection*{The strategy}
To satisfy $P_e$, one continues putting more and more fresh elements into $Y$, thus not increasing
the number of $R_Y$-classes. By doing so, we will witness eventually that
$p_e$ maps two $R$-classes to a single $R_Y$-class, since the number of
$R$-classes will outgrow the number of $R_Y$-classes (recall that $R$ is
non-finite, see Remark~\ref{rem:assumption}).

To satisfy a given $Q_e$-requirement, we follow a dual strategy: for any fresh element,
we declare the corresponding singleton as an $R_Y$-class. This ensures that we will find
a pair of witnesses that show that $p_e$ is not a reduction of $R_Y$ to $R$,
since otherwise we would have a reduction of $\Id$ to $R$ as well.

\subsubsection*{The construction}
We construct set $Y$ in stages: in fact at a stage $s$ we define its initial
segment $\sigma_s$ of length $s$, and eventually we take
$Y=\bigcup_{s\in\omega}\sigma_s$.

\subsubsection*{Stage $0$}
Define $\sigma_{0}=\lambda$; \emph{open the $P_{0}$-cycle}.

\subsubsection*{Stage $s+1$}
Assume that $R$ is the currently open cycle.
\begin{enumerate}
\item $R=P_e$, for some $e$: If so, we distinguish two cases.

\begin{enumerate}
\item  If $p_{e}$ shows a counterexample to $R\leq_{pr} R_Y$ (see
    Section~\ref{ssct:counterexample}) then \emph{close the
    $P_{e}$-cycle}. Define $\sigma_{s+1}=\catt{\sigma_{s}}{0}$.
    \emph{Open the $Q_{e}$-cycle }.
\item Otherwise, keep the $P_{e}$-cycle open and define
    $\sigma_{s+1}=\catt{\sigma_{s}}{1}$.
\end{enumerate}

\smallskip

\item If $R=Q_e$, for some $e$, then distinguish two more cases.

\begin{enumerate}
\item  If  $p_{e}$ shows a counterexample to $R_Y\leq_{pr} R$ (see
    Section~\ref{ssct:counterexample}) then \emph{close the
    $Q_{e}$-cycle}. Define $\sigma_{s+1}=\catt{\sigma_{s}}{0}$.
    \emph{Open the $P_{e+1}$-cycle}.
\item Otherwise, keep the $Q_{e}$-cycle open and define
    $\sigma_{s+1}=\catt{\sigma_{s}}{0}$.
\end{enumerate}
\end{enumerate}

This concludes the construction.

\subsubsection*{The verification}
 The verification relies on the following lemmas.

\begin{lemma}
$Y$ is primitive recursive.
\end{lemma}
\begin{proof}
The function $s\mapsto \sigma_s$ is primitive recursive; moreover
$l^{\sigma_{s}}=s$. Hence, $Y$ is primitive recursive, as
$Y(s)=\sigma_{s+1}(s)$.
\end{proof}

\begin{lemma}
All $P$-requirements are satisfied.
\end{lemma}

\begin{proof}
The proof follows the lines of the analogous claim in the proof of
Theorem~\ref{prop:belowid}. First of all, it easily follows by induction that
the $P_{e}$-cycle is opened at stage $0$ if $e=0$, and at the stage at which
the $Q_{e-1}$-cycle is closed if $e>0$; and the $Q_{e}$-cycle is opened at
the stage at which the $P_{e}$-cycle is closed. Assume that for every $i<e$
the $P_{i}$-cycle and the $Q_{i}$-cycle have been closed, and the
corresponding requirements are satisfied. Then at the stage $s_{0}$ (with
$s_{0}=0$ if $e=0$, or $s_{0}$ is the stage when we close the
$Q_{e-1}$-cycle) we open the $P_{e}$-cycle. Failure to close the $P_e$-cycle
would entail that $Y$ is cofinite, thus $R_{Y}$ would have only finitely many
classes, but $p_{e}$ never showing a counterexample would give that $p_{e}: R
\leq_{pr} R_{Y}$, a contradiction as $R$ is not finite. Thus, at some stage,
$p_{e}$ shows a counterexample to $R \leq_{pr} R_{Y}$, whence $P_{e}$ is
satisfied.
\end{proof}

\begin{lemma}
All $Q$-requirements are satisfied.
\end{lemma}
\begin{proof}
Assume that for every $i<e$ the $Q_{i}$-cycle has been closed, and
for every $i\leq e$ the $P_{i}$-cycle has been closed, and the corresponding
requirements are satisfied. If $s_{0}$ is the stage at which we open the
$Q_{e}$-cycle (that is when we close the $P_{e}$-cycle) and for every $s\ge
s_{0}+1$ we never close the cycle then this would entail that $Y$ is finite
(whence $R_{Y}\equiv_{pr} \Id$), but as $p_{e}$ never shows a counterexample
this would give that $p_{e}: R_{Y} \leq_{pr} R$, whence $\Id \leq_{pr} R$, a
contradiction. Thus, at some stage, $p_{e}$ shows a counterexample to $R_{Y}
\leq_{pr} R$, whence $Q_{e}$ is satisfied.
\end{proof}

This concludes the proof of Theorem \ref{thm:incomparability}.
\end{proof}

From the last theorem, it follows that there is an infinite antichain of
punctual degrees.

\begin{corollary}\label{cor:antichain}
There are punctual equivalence relations $\set{S_i}_{i\in\omega}$ such that
$S_i |_{pr}S_j$ for all $i\neq j$.
\end{corollary}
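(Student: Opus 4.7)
The plan is to build the antichain inductively, by iterating a finitary strengthening of Theorem~\ref{thm:incomparability}.

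First, I would establish the following multi-relation version of Theorem~\ref{thm:incomparability}: given any punctual $R_0, \ldots, R_k$ with $R_j <_{pr} \Id$ for each $j \leq k$, there is a punctual $S$ with $S \mid_{pr} R_j$ for every $j \leq k$. The construction is the cyclic one from Theorem~\ref{thm:incomparability}, but now for the enlarged list of requirements
\begin{align*}
P_{e,j}&: p_e \text{ is not a reduction from } R_j \text{ to } R_Y,\\
Q_{e,j}&: p_e \text{ is not a reduction from } R_Y \text{ to } R_j,
\end{align*}
arranged in a single primitive recursive enumeration of pairs $(e,j)$ with $e \in \omega$ and $j \leq k$. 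Each $P_{e,j}$-cycle appends $1$s until a counterexample to $R_j \leq_{pr} R_Y$ appears; it must close in finitely many stages, since otherwise $Y$ would be cofinite and $R_Y$ finite, whereas $R_j$ is non-finite. Each $Q_{e,j}$-cycle appends $0$s until a counterexample to $R_Y \leq_{pr} R_j$ appears; it must close in finitely many stages, since otherwise $Y$ would be finite, giving $R_Y \equiv_{pr} \Id \leq_{pr} R_j$, contradicting $R_j <_{pr} \Id$. The verification of Theorem~\ref{thm:incomparability} transfers word for word.

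Next I would run the induction. For the base case, the corollary to Theorem~\ref{prop:belowid} supplies a punctual $S_0 <_{pr} \Id$. For the inductive step, assume $S_0, \ldots, S_n$ are pairwise $pr$-incomparable and each strictly below $\Id$. Apply the multi-relation version above with $R_j = S_j$ for $j \leq n$ to produce a punctual $S_{n+1}$ with $S_{n+1} \mid_{pr} S_j$ for every $j \leq n$. Then $S_{n+1} <_{pr} \Id$: the inequality $S_{n+1} \leq_{pr} \Id$ is automatic, and if $\Id \leq_{pr} S_{n+1}$ held we would get $S_0 \leq_{pr} \Id \equiv_{pr} S_{n+1}$, contradicting $S_0 \mid_{pr} S_{n+1}$. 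Hence the induction closes, and the family $\{S_i\}_{i \in \omega}$ is the desired antichain.

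The only point requiring care is to confirm that enlarging the list of requirements does not spoil the original cyclic argument. It does not: the construction remains sequential, with exactly one open cycle at each stage, every cycle closes in finitely many stages for the reasons above, and the primitive recursiveness of $Y$ follows as in Theorem~\ref{thm:incomparability}. No genuine obstacle arises; the content of the result lies entirely in the observation that Theorem~\ref{thm:incomparability}, once strengthened in this routine way, is robust enough to be iterated while staying strictly below the top degree.
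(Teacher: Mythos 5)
Your proposal is correct and follows essentially the same route as the paper: the paper states an observation for uniformly primitive recursive infinite families and then applies it to the padded finite family $S_0,\ldots,S_{n-1},S_n,S_n,\ldots$, which is precisely your finite-list strengthening of Theorem~\ref{thm:incomparability}, and the induction step is identical (including the implicit need to check $S_{n+1}<_{pr}\Id$, which you spell out).
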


\begin{proof}
The proof relies on the following observation

\medskip

\paragraph{\textbf{Observation}}
Suppose that $\{R_{i}: i \in \omega\}$ is a family of punctual equivalence
relations none of which is $pr$-equivalent to $\Id$, and for which there
exists a primitive recursive predicate $U(i,x,y)$ such that $x \rel{R_i} y$
if and only if $U(i,x,y)$. Then a straightforward modification of the proof
of Theorem~\ref{thm:incomparability} will show that there exists a punctual
$S$ such that $R_{i} |_{pr} S$, for every $i$. The construction in  this case
aims to build a primitive recursive set $Y$ satisfying the following
requirements $R_{\langle e,i\rangle}$ indexed by the values of the primitive
recursive Cantor pairing function:
\begin{align*}
P_{\langle e, i \rangle}: p_e \mbox{ does not reduce
          $R_{i}$ to $R_Y$},\\
Q_{\langle e, i \rangle}: p_e \mbox{ does not reduce $R_Y$ to $R_{i}$}.
\end{align*}
The construction goes by opening and closing $R$-cycles as in the proof of
the previous theorem. Checking if a counterexample is being shown is
primitive recursive, since this can be done by using the primitive recursive
predicate $U$, together with Section~\ref{ssct:listing}.

\medskip

To finish the proof of the corollary, define by induction the following
infinite antichain $\{S_{n}\}_{n \in \omega}$ of punctual equivalence
relations: Pick $S_{0}<_{pr} \Id$; having found $S_{0}, \ldots, S_{n}$ apply
the above observation to the family $\{R_{i}\}_{i \in \omega}$, where
$R_{i}=S_{i}$ if $i < n$, and $R_{i}=S_{n}$ if $i \ge n$.
\end{proof}

\section{The punctual degrees form a distributive lattice}

In this section, we study the structure of the punctual degrees under joins
and meets. By the Normal Form Theorem we will confine ourselves to
equivalence relations of the form $R_{X}$, where $X$ is a primitive recursive
set.

The first result of this section provides us with a useful characterization
of the reducibility $\leq_{pr}$. Informally speaking, the characterization
connects the punctual degree of a relation $R_X$ with the growth rate of the
function
\[
	\#(0^X) [s] := \#(0^{X\restriction (s+1)}),
\]
i.e. the function which counts the number of singleton $R_X$-classes.

We emphasize that for a primitive recursive $X$, the corresponding function
$\#(0^X) [s]$ is also primitive recursive.

\begin{proposition} \label{prop:growth-rate}
Let $X$ and $Y$ be coinfinite primitive recursive sets. Then we have
$R_X\leq_{pr} R_{Y}$ if and only if there exists a primitive recursive
function $h(x)$ such that for all $s$,
\[
	\#(0^X) [s] \leq \#(0^Y) [h(s)].
\]
\end{proposition}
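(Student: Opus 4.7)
The plan is to prove both directions of the equivalence, leveraging the normal-form machinery of Section~\ref{sect:normal-form}. The key observation is that $\#(0^X)[s]$ counts exactly the elements of $\overline{X}$ lying in $[0,s]$, i.e.\ the singleton $R_X$-classes realized by position $s$. So the proposition is really the statement that the reducibility $R_X \leq_{pr} R_Y$ holds iff $\overline{X}$ is listed ``at most as fast as'' $\overline{Y}$ up to a primitive recursive time-shift $h$.

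For the forward direction, suppose $R_X \leq_{pr} R_Y$. I would combine Lemma~\ref{lem:from-classone-to-classone}, Lemma~\ref{lem:inversionfail}, and Remark~\ref{rem:nondecreasing} to replace the reduction by a primitive recursive function $g$ that sends $X$ into $Y$, bijects $\overline{X}$ onto $\overline{Y}$ (by inspection of the explicit $g$ built in Lemma~\ref{lem:inversionfail}, each value $g(x)$ for $x\in\overline{X}$ is chosen inside $\overline{Y}$), and is nondecreasing on $\overline{X}$. Setting $h(s)=\max\{g(i):i\leq s\}$, which is clearly primitive recursive, the $\#(0^X)[s]$ elements of $\overline{X}\cap[0,s]$ are mapped injectively by $g$ into $\overline{Y}\cap[0,h(s)]$, yielding $\#(0^X)[s]\leq \#(0^Y)[h(s)]$.

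For the backward direction, I will build the reduction $f$ directly from $h$. Pick any $y^{*}\in Y$ (the degenerate case $Y=\emptyset$ forces $R_Y=\Id$ and is handled by the trivial variant in which one lists $\overline{X}$ injectively into $\omega$). Define
\[
f(x)=\begin{cases}
y^{*}, & \text{if } x\in X,\\
\mu y\leq h(x)\,\bigl[\,y\in \overline{Y}\ \&\ \#(0^Y)[y]=\#(0^X)[x]\,\bigr], & \text{if } x\in \overline{X}.
\end{cases}
\]
The hypothesis $\#(0^X)[x]\leq \#(0^Y)[h(x)]$ guarantees that the $\#(0^X)[x]$-th element of $\overline{Y}$ (in order of magnitude) is bounded by $h(x)$, so the bounded search succeeds. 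Hence $f$ is primitive recursive, constant on $X$ with value in $Y$, and injective on $\overline{X}$ with image in $\overline{Y}$, giving $R_X\leq_{pr} R_Y$.

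The main obstacle, as I see it, is not the construction itself but the preparatory step in the forward direction: one must be sure that $g(\overline{X})\subseteq \overline{Y}$, so that the bound $h(s)=\max\{g(i):i\leq s\}$ counts $\overline{Y}$-elements and not $Y$-elements. The lemmas from Section~\ref{sect:normal-form} accomplish exactly this, so the remainder is straightforward counting.
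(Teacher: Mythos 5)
Your proof is correct and follows essentially the same route as the paper's: both directions normalize the reduction via Lemma~\ref{lem:from-classone-to-classone} and Remark~\ref{rem:nondecreasing} (hence Lemma~\ref{lem:inversionfail}), extract $h$ in the forward direction by bounding the image on $\overline{X}$, and in the converse direction construct the order-preserving map sending the $m$-th element of $\overline{X}$ to the $m$-th element of $\overline{Y}$ via a bounded search justified by the hypothesis on $h$. The only differences are cosmetic: you take $h(s)=\max\{g(i):i\leq s\}$ where the paper uses $h(s)=g(k^{\ast})$ with $k^{\ast}$ the largest element of $\overline{X}$ below $s$, and your converse reduction is given by a closed-form search on the zero-count rather than by the paper's recursive ``least unused element of $\overline{Y}$'' definition, which produces the same function.
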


\begin{proof}
Suppose that $f\colon R_X \leq_{pr} R_Y$. By Remark~\ref{rem:nondecreasing},
one may assume that for any pair of elements $k<l$ from $\overline{X}$, we
have $f(k) < f(l)$. In addition, by
Lemma~\ref{lem:from-classone-to-classone}, we assume that $f[\overline{X}]
\subseteq \overline{Y}$. The desired primitive recursive function $h(s)$ is
defined as follows:
\[
	h(s) = \begin{cases}
		f(k^{\ast}), & \text{if } k^{\ast} \text{ is the greatest number}\\
		& \text{ such that }  k^{\ast} \leq s \text{ and } k^{\ast} \in \overline{X},\\
		0, & \text{if } (\forall k \leq s) (k\in X).
	\end{cases}
\]
Indeed, suppose that $\#(0^X) [s] = N>0$. Consider the set
\[
	\overline{X} \cap \{0,1,\dots,s\} = \{ k_1 < k_2 < \dots < k_N\}.
\]
Then we have $f(k_1) < f(k_2) < \dots < f(k_N) = h(s)$, and hence, $\#(0^Y)
[h(s)] \geq N = \#(0^X) [s] $.

To show the converse, let $h$ be a primitive recursive function such that
$\#(0^X) [s] \leq \#(0^Y) [h(s)]$ for all $s$. Without loss of generality,
one may assume that $0\in Y$. The desired primitive recursive reduction
$g\colon R_X \leq_{pr} R_{Y}$ is defined by recursion on $x\in\omega$ as
follows:
\begin{align*}
	g(0)	&= \begin{cases}
		0, & \text{if } 0 \in X,\\
		\text{the least element from } \overline{Y}, & \text{if } 0 \in\overline{X};
	\end{cases} \\
	g(x+1) &= \begin{cases}
		0, & \text{if } x+1 \in X,\\
		\mu y[ y\leq h(x+1) \,\&\, y\in \overline{Y} \smallsetminus \{ g(z)\,\colon z\leq x\} ], & \text{if } x+1 \in\overline{X}.
	\end{cases}
\end{align*}
Suppose that $x\in \overline{X} \smallsetminus\{ 0\}$ and the set
$\overline{X}\cap \{ 0,1,\dots,x\}$ equals $\{ k_1 < k_2 <\dots < k_N = x\}$.
Then the set $\overline{Y} \cap \{1,2,\dots,h(x)\}$ contains at least $N$
elements, and this set has at most $N-1$ elements from
$\text{range}(g\upharpoonright x)$. Therefore, the function $g$ is
well-defined. In addition, it is easy to observe that $g$ provides a
reduction $R_X \leq_{pr} R_{Y}$.
\end{proof}

Proposition~\ref{prop:topcrit} can now be restated as:

\begin{corollary}
	$\Id \leq_{pr} R_Y$ if and only if there is a primitive recursive function $h(x)$ such that $s \leq \#(0^Y) [h(s)]$ for all $s\in\omega$.
\end{corollary}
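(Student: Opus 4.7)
The plan is to obtain the corollary as the special case $X = \emptyset$ of Proposition~\ref{prop:growth-rate}. First I would observe that $R_\emptyset$ is precisely $\Id$: by definition $x R_\emptyset y$ holds iff $x = y$ or $x,y \in \emptyset$, and the second clause is vacuous. Since $\emptyset$ is trivially a primitive recursive coinfinite set, the hypotheses of Proposition~\ref{prop:growth-rate} are satisfied whenever $Y$ is itself coinfinite; the paper's running assumption (Remark~\ref{rem:assumption}) guarantees that this is indeed the case in our setting, because $R_Y$ being non-finite forces $\overline{Y}$ to be infinite.

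Next I would compute $\#(0^\emptyset)[s]$. The characteristic function of $\emptyset$ is the constant $0$ function, so $\emptyset \restriction (s+1)$ is a block of $s+1$ zeros, which gives $\#(0^\emptyset)[s] = s+1$. Substituting into Proposition~\ref{prop:growth-rate} then yields: $\Id \leq_{pr} R_Y$ if and only if there is a primitive recursive $h$ such that $s + 1 \leq \#(0^Y)[h(s)]$ for every $s$.

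It remains only to reconcile the off-by-one between the $s+1$ above and the $s$ appearing in the corollary. One direction is immediate, since $s+1 \leq \#(0^Y)[h(s)]$ trivially implies $s \leq \#(0^Y)[h(s)]$. For the converse, given a primitive recursive $h$ with $s \leq \#(0^Y)[h(s)]$ for all $s$, the function $h'(s) := h(s+1)$ is again primitive recursive and satisfies $s+1 \leq \#(0^Y)[h'(s)]$. The two conditions are therefore equivalent, and the corollary follows.

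I do not anticipate any real obstacle: all of the quantitative content has been done in Proposition~\ref{prop:growth-rate}, and the corollary is a transparent specialization of that result modulo a one-step index shift, which is evidently preserved by primitive recursion.
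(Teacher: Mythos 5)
Your proof is correct and is essentially the argument the paper has in mind: the corollary is stated without proof as a direct restatement/specialization of Proposition~\ref{prop:growth-rate}, which is exactly what you carry out (taking $X=\emptyset$, computing $\#(0^\emptyset)[s]=s+1$, and handling the harmless off-by-one shift).
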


\subsection{Joins and meets}
Now we are ready to prove that the partial order $\mathbf{Peq}$ has joins and
meets, which make the structure a lattice. By slightly abusing notations, we
will talk about suprema and infima of punctual equivalence relations
(referring of course to the poset $\mathbf{Peq}$ of the $pr$-degrees).

\begin{theorem}\label{theor:lattice}
The structure $\mathbf{Peq}$ is a lattice.
\end{theorem}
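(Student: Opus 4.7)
The plan is to combine the Normal Form Theorem with the growth-rate characterization of Proposition~\ref{prop:growth-rate}. By Theorem~\ref{thm:normal-form} and our standing non-finiteness convention (Remark~\ref{rem:assumption}), every punctual degree contains a relation of the form $R_X$ for a coinfinite primitive recursive set $X$. Fix two such sets $X$ and $Y$. The key observation is that both the pointwise maximum $M(s)=\max(\#(0^X)[s],\#(0^Y)[s])$ and the pointwise minimum $m(s)=\min(\#(0^X)[s],\#(0^Y)[s])$ are nondecreasing primitive recursive functions whose one-step increments land in $\{0,1\}$, because the same holds for $\#(0^X)[s]$ and $\#(0^Y)[s]$ individually.

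This lets me realize $M$ and $m$ themselves as zero-counting functions of primitive recursive coinfinite sets. I would define
\[
    s \in \overline{Z_\vee} \iff M(s) > M(s-1), \qquad s \in \overline{Z_\wedge} \iff m(s) > m(s-1),
\]
with the convention $M(-1) = m(-1) = 0$. By design $\#(0^{Z_\vee})[s] = M(s)$ and $\#(0^{Z_\wedge})[s] = m(s)$, and both complements are infinite because $\overline{X}$ and $\overline{Y}$ are. I then claim $R_{Z_\vee}$ is the join and $R_{Z_\wedge}$ is the meet of $R_X$ and $R_Y$ in $\mathbf{Peq}$.

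The verification reduces to four straightforward applications of Proposition~\ref{prop:growth-rate}, each with a manifestly primitive recursive bounding function. The bounds $R_X, R_Y \leq_{pr} R_{Z_\vee}$ and $R_{Z_\wedge} \leq_{pr} R_X, R_Y$ use $h(s)=s$. For the supremum clause, given an upper bound $R_W$ witnessed by nondecreasing primitive recursive $h_X, h_Y$, the function $h(s)=\max(h_X(s),h_Y(s))$ satisfies $\#(0^{Z_\vee})[s] = M(s) \leq \#(0^W)[h(s)]$; dually, for an arbitrary lower bound $R_W$ of $R_X$ and $R_Y$, the same $h$ witnesses $\#(0^W)[s] \leq \#(0^{Z_\wedge})[h(s)]$. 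The only pitfall to avoid would be attempting to realize the join via the more obvious candidate $Z_\vee = X \cap Y$: there one has $\#(0^{X\cap Y})[s]$ potentially as large as $\#(0^X)[s]+\#(0^Y)[s]$, and the required factor-of-$2$ absorption against $\#(0^W)$ need not be primitive recursive (since $\overline{W}$ may be p.r.-immune, as produced by Theorem~\ref{prop:belowid}). The jump-tracking recipe above bypasses this obstacle entirely.
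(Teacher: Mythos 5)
Your proof is correct and mirrors the paper's argument essentially verbatim: you realize the join and meet by defining primitive recursive sets whose zero-counting functions track the pointwise $\max$ and $\min$ of $\#(0^X)[\cdot]$ and $\#(0^Y)[\cdot]$, and then you verify the lattice properties via Proposition~\ref{prop:growth-rate} with bounding function $\max(h_X,h_Y)$ for the extremal clauses. The paper uses identical constructions (its $Z_0$ and $Z_1$ are your $Z_\vee$ and $Z_\wedge$), so there is no substantive divergence beyond the closing remark about $X\cap Y$, which is an apt but supplementary observation.
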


\begin{proof}
Suppose that $X$ and $Y$ are coinfinite primitive recursive sets such that
$R_X |_{pr} R_Y$. Without loss of generality, we assume that $0\in X\cap Y$.
In order to prove the theorem, it is sufficient to show that the relations
$R_X$ and $R_{Y}$ have supremum and infimum.

We define a set $Z_0 \subseteq \omega$ as follows: $0\in Z_0$, and
\[
	s+1 \not\in Z_0\ \Leftrightarrow\ \max(\#(0^X) [s+1],
\#(0^Y) [s+1]) > \max (\#(0^X) [s],  \#(0^Y) [s]).
\]
Recall that the functions $\#(0^X)[s]$ and $\#(0^Y)[s]$ are primitive
recursive. Hence, it is easy to show that the set $Z_0$ is primitive
recursive and coinfinite.

\begin{claim}
$R_{Z_0}$ is the supremum of $R_X$ and $R_Y$.
\end{claim}
\begin{proof}
	First, we note the following:	$\#(0^Z)[0]  = \#(0^X)[0] = \#(0^Y)[0] = 0$, and every set $U$ satisfies
	\[
		\#(0^U)[s+1] = \begin{cases}
			\#(0^U)[s] + 1, & \text{if } s+1\not\in U,\\
			\#(0^U)[s],  & \text{if } s+1\in U.
		\end{cases}
	\]
	These observations (together with an easy induction argument) imply that
	\begin{equation} \label{equ:maximum}
		\#(0^{Z_0})[s] = \max(\#(0^X) [s],  \#(0^Y) [s]).
	\end{equation}
	Thus, one can apply Proposition~\ref{prop:growth-rate} for the function $h(x)=x$, and deduce that $R_{Z_0}$ is an upper bound for both $R_X$ and $R_Y$.
	
	Now suppose that $R_{V}$ is an arbitrary upper bound of $R_X$ and $R_Y$. By Proposition~\ref{prop:growth-rate}, we choose primitive recursive functions $h_X$ and $h_Y$ such that $\#(0^X) [s] \leq \#(0^V) [h_X(s)]$ and $\#(0^Y) [s] \leq \#(0^V) [h_Y(s)]$. Then by~(\ref{equ:maximum}), we obtain
	\[
		\#(0^{Z_0})[s] \leq \max(\#(0^V) [h_X(s)], \#(0^V) [h_Y(s)]) = \#(0^V) [\max(h_X(s),h_Y(s)) ].
	\]
	Hence, we deduce that $R_{Z_0} \leq_{pr} R_V$, and $R_{Z_0}$ is join of $R_X$ and $R_Y$.
\end{proof}

Let now $Z_1$ be the set determined by the following: $0\in Z_1$, and
\[
	s+1 \not\in Z_1\ \Leftrightarrow\ \min(\#(0^X) [s+1],  \#(0^Y) [s+1]) > \min (\#(0^X) [s],  \#(0^Y) [s]).
\]

\begin{claim}
$R_{Z_1}$ is the infimum of $R_X$ and $R_Y$.
\end{claim}
\begin{proof}
	As in the previous claim, one can easily show that
	\begin{equation} \label{equ:minimum}
		\#(0^{Z_1})[s] = \min(\#(0^X) [s],  \#(0^Y) [s]).
	\end{equation}
	By Proposition~\ref{prop:growth-rate}, $R_{Z_1}$ is a lower bound for $R_X$ and $R_Y$.

	Let $R_{V}$ be a lower bound of $R_X$ and $R_Y$. We fix primitive recursive functions $q_X$ and $q_Y$ such that $\#(0^V) [s] \leq \#(0^X) [q_X(s)]$ and $\#(0^V) [s] \leq \#(0^Y) [q_Y(s)]$. Then by~(\ref{equ:minimum}),
	\begin{multline*}
		\#(0^V) [s] \leq \min( \#(0^X) [q_X(s)],  \#(0^Y) [q_Y(s)]) \leq\\
		\min(\#(0^X) [\max(q_X(s),q_Y(s))],  \#(0^Y) [\max(q_X(s),q_Y(s))]) =\\
		 \#(0^{Z_1}) [\max(q_X(s),q_Y(s))].
	\end{multline*}
	Therefore, $R_{Z_1}$ is meet of $R_X$ and $R_Y$.
\end{proof}

Theorem~\ref{theor:lattice} is proved.
\end{proof}

\begin{emdef}\label{def:joinsets}
Given primitive recursive sets $X$ and $Y$, let us denote $R_X \lor R_Y$ the
relation $R_{Z_0}$, constructed in the proof of Theorem~\ref{theor:lattice},
giving the supremum of $R_X$ and $R_Y$. In addition, let us denote $Z_0=X\lor
Y$. Likewise, let us denote $R_X \wedge R_Y$ the relation $R_{Z_1}$,
constructed in the proof above, giving the infimum of $R_X$ and $R_Y$. We
also denote $Z_1=X\wedge Y$.
\end{emdef}

\begin{corollary}
There are no minimal pairs of punctual degrees.
\end{corollary}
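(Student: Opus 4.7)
The plan is to show that every non-finite punctual degree has a non-finite punctual degree strictly below it: once this is established, given any pair $R, S$ their meet $R \wedge S$ (a non-finite punctual degree, see below) admits something strictly below, and so no such pair can have a meet that is a minimal element of $\mathbf{Peq}$.

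First I would verify that, for non-finite punctual $R$ and $S$ in normal form (so $R \equiv_{pr} R_X$, $S \equiv_{pr} R_Y$ with $X, Y$ coinfinite primitive recursive sets), the meet $R \wedge S$ is itself non-finite. This is immediate from the construction in Theorem~\ref{theor:lattice}: one has $\#(0^{X \wedge Y})[s] = \min(\#(0^X)[s], \#(0^Y)[s])$, and since both $\#(0^X)[s]$ and $\#(0^Y)[s]$ tend to infinity, so does their minimum. Thus $X \wedge Y$ is coinfinite and $R_{X \wedge Y}$ is a bona fide non-finite punctual equivalence relation.

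Next, let $U$ be any non-finite punctual equivalence relation; I want to produce a non-finite punctual $T$ with $T <_{pr} U$. Split into two cases. If $U \equiv_{pr} \Id$, then Theorem~\ref{prop:belowid} yields a primitive recursive set $Y$ with $\overline{Y}$ p.r.-immune, so that $T := R_Y$ is a non-finite punctual equivalence relation with $T <_{pr} \Id \equiv_{pr} U$. If instead $U <_{pr} \Id$, then Theorem~\ref{thm:incomparability} provides a (non-finite) punctual $V$ with $V \mid_{pr} U$; setting $T := U \wedge V$ the lattice inequality gives $T \leq_{pr} U$, and if $T \equiv_{pr} U$ held we would have $U \leq_{pr} V$, contradicting the incomparability. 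Hence $T <_{pr} U$, and again $T$ is non-finite by the growth-rate formula applied to $U$ and $V$.

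Applying this to $U := R \wedge S$ produces $T <_{pr} R \wedge S$, so $R \wedge S$ is not a minimal element of $\mathbf{Peq}$, and the corollary follows. There is no real obstacle here: the work was already done in Theorem~\ref{prop:belowid} (to kill the top case) and Theorem~\ref{thm:incomparability} (to kill the sub-$\Id$ case); the only thing to watch is the non-finiteness of the various meets appearing, which is guaranteed throughout by the explicit $\min$-formula for $\#(0^{\cdot})[s]$ underlying the lattice construction.
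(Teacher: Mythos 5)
Your proof is correct, but it does more work than the paper intends. The paper's proof is literally ``Immediate'': since by Remark~\ref{rem:assumption} every degree in $\mathbf{Peq}$ is the degree of a non-finite relation, and since Theorem~\ref{theor:lattice} shows that any two degrees $R_X, R_Y$ have an infimum $R_{X\wedge Y}$ which is itself a non-finite punctual degree (non-finiteness following from the formula $\ZOP{X\wedge Y}[s]=\min(\ZOP{X}[s],\ZOP{Y}[s])\to\infty$, exactly as you note), any incomparable pair has a common lower bound strictly below both of them inside the structure -- which already rules out minimal pairs in the usual sense for a structure with no least element. You instead prove the stronger statement that \emph{no} degree of $\mathbf{Peq}$ is minimal (splitting into the cases $U\equiv_{pr}\Id$, handled by Theorem~\ref{prop:belowid} together with Proposition~\ref{prop:topcrit}, and $U<_{pr}\Id$, handled by taking $U\wedge V$ for $V\mid_{pr}U$ from Theorem~\ref{thm:incomparability}), and then apply this to the meet $R\wedge S$. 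This is precisely the downward-density argument the paper gives later for Corollary~\ref{cor:up-low}, and all the ingredients you invoke are indeed available at this point of the paper, so the argument is sound; what it buys you is that the conclusion holds under either reading of ``minimal pair'' (no common lower bound, or infimum being a minimal element), at the cost of invoking the incomparability theorem where the paper needs only the lattice operations.
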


\begin{proof}
Immediate.
\end{proof}

Note that in the proof of Theorem~\ref{theor:lattice}, we gave an explicit
algorithm for building suprema and infima. This allows us to easily obtain
the following:

\begin{theorem}\label{theor:distr}
The lattice $\mathbf{Peq}$ is distributive.
\end{theorem}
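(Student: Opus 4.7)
The plan is to leverage the explicit counting-function characterizations already established in the proof of Theorem~\ref{theor:lattice}. Recall that, for coinfinite primitive recursive $X$ and $Y$, we proved
\[
\#(0^{X \lor Y})[s] = \max(\#(0^X)[s], \#(0^Y)[s]) \quad\text{and}\quad \#(0^{X \wedge Y})[s] = \min(\#(0^X)[s], \#(0^Y)[s]).
\]
Combined with Proposition~\ref{prop:growth-rate}, this means that to show $R_A \leq_{pr} R_B$ it suffices to exhibit a primitive recursive $h$ with $\#(0^A)[s] \leq \#(0^B)[h(s)]$ for all $s$; in particular, if $\#(0^A)[s] = \#(0^B)[s]$ for every $s$ then $R_A \equiv_{pr} R_B$ via $h(s)=s$.

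So my plan is to verify one of the two distributive laws, say
\[
R_X \wedge (R_Y \lor R_Z) \equiv_{pr} (R_X \wedge R_Y) \lor (R_X \wedge R_Z),
\]
by unfolding both sides through the characterizations above. On the left, iterating the identities gives
\[
\#(0^{X \wedge (Y \lor Z)})[s] = \min\bigl(\#(0^X)[s],\ \max(\#(0^Y)[s],\#(0^Z)[s])\bigr),
\]
while on the right,
\[
\#(0^{(X \wedge Y) \lor (X \wedge Z)})[s] = \max\bigl(\min(\#(0^X)[s],\#(0^Y)[s]),\ \min(\#(0^X)[s],\#(0^Z)[s])\bigr).
\]
Since $(\mathbb{N},\leq)$ is a distributive lattice under $\min$ and $\max$, the pointwise identity $\min(a,\max(b,c)) = \max(\min(a,b),\min(a,c))$ yields that these two counting functions are equal, and hence the corresponding $R_Z$-style relations are $pr$-equivalent via the identity growth rate. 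The dual law follows by the same calculation with the roles of $\min$ and $\max$ swapped.

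There is essentially no obstacle here: the heavy lifting was already done in Theorem~\ref{theor:lattice}, which reduced joins and meets in $\mathbf{Peq}$ to arithmetic on primitive recursive counting functions. The only minor care required is to ensure that the relevant sets are coinfinite so Proposition~\ref{prop:growth-rate} applies; this is guaranteed by Remark~\ref{rem:assumption}, since every relation in our setting is assumed non-finite. Thus distributivity of $\mathbf{Peq}$ is inherited directly from the distributivity of $(\mathbb{N}, \min, \max)$.
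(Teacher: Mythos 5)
Your proof is correct and takes essentially the same approach as the paper: both unfold joins and meets through the counting-function identities from the proof of Theorem~\ref{theor:lattice} and invoke distributivity of $\min$ and $\max$ on $(\omega,\leq)$. The only cosmetic difference is that you verify the dual distributive law, which is equivalent in any lattice.
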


\begin{proof}
Let $X$, $Y$, and $Z$ be coinfinite primitive recursive sets. As discussed
above, by $R_X \vee R_Y$ we denote the supremum of $R_{X}$ and $R_Y$, and
$R_X \wedge R_Y$ is the infimum of $R_X$ and $R_Y$. We sketch the proof for
the following distributivity law:
\[
	R_X \vee (R_Y \wedge R_Z) \equiv_{pr} (R_X \vee R_Y) \wedge (R_X \vee R_Z).
\]

Suppose that $Q \equiv_{pr} R_X \vee (R_Y \wedge R_Z)$ and $S \equiv_{pr}
(R_X \vee R_Y) \wedge (R_X \vee R_Z)$. We may assume that $Q = R_U$ and $S =
R_V$, where $U$ and $V$ are primitive recursive sets such that $0\in U\cap
V$, and for every $s\in\omega$,
\begin{gather*}
\#(0^U)[s]  = \max ( \#(0^X)[s],
\min (\#(0^Y)[s], \#(0^Z)[s] ) ),\\
\#(0^V)[s] = \min ( \max(\#(0^X)[s], \#(0^Y)[s]),
\max( \#(0^X)[s], \#(0^Z)[s] ) ).
\end{gather*}

Since the structure $(\omega, \leq)$ is a linear order, for any numbers
$x,y,z\in\omega$, we have
\[
	\max(x, \min(y,z)) = \min( \max(x,y), \max (x,z) ).
\]
Hence, it is clear that $\#(0^U)[s] = \#(0^V)[s]$ for every $s$, and $R_U =
R_V$. This concludes the proof of Theorem~\ref{theor:distr}.
\end{proof}

\section{Density}
We prove now that the distributive lattice of punctual degrees is  dense.
This contrasts with the case of \textbf{Ceers} and \textbf{ER}, where each
degree has a  minimal cover (see \cite{Andrews-Sorbi-19,andrews2021structure}
for details). However, density is a phenomenon that often shows up when
focusing on the subrecursive world. Mehlorn \cite{mehlhorn1976polynomial}
proved that  the degree structures induced by many subrecursive
reducibilities on sets (including the primitive recursive one) are dense.
Similarly, Ladner~\cite{ladner1975structure} proved that, if $\mathbf{P}\neq
\mathbf{NP}$, then the poset of $\mathbf{NP}$ sets under polynomial-time
reducibility is dense.

Density emerges also in  the study of the online content of structures. More
precisely, for a structure $\mathcal{A}$, $\mathbf{FPR}(\mathcal{A})$ denotes
the degree structure generated by primitive recursive isomorphisms on the
collection of all punctual copies of $\mathcal{A}$.  Bazhenov, Kalimullin,
Melnikov, and Ng~\cite{bazhenov2020online} recently proved the following: if
a punctual infinite $\mathcal{A}$ is finitely generated, then the poset
$\mathbf{FPR}(\mathcal{A})$ is either one-element or dense.

\begin{theorem}[Density]\label{thm:density}
If $R_{X}<_{pr} R_{Z}$ are punctual equivalence relations then there exists a
primitive recursive set $Y$ such that $R_{X}<_{pr} R_{Y} <_{pr}R_{Z}$.
\end{theorem}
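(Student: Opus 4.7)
The plan is to apply the growth-rate characterization of Proposition~\ref{prop:growth-rate}: $R_A\leq_{pr} R_B$ iff some primitive recursive $h$ satisfies $\#(0^A)[s]\leq \#(0^B)[h(s)]$ for all $s$. Density then reduces to sandwiching a primitive recursive counting function $\#(0^Y)[s]$ whose growth lies strictly between those of $\#(0^X)$ and $\#(0^Z)$. First I would perform a few harmless normalizations: we may assume $0\in X\cap Z$; that $Z$ is infinite (otherwise replace $Z$ with $\{2n:n\in Z\}\cup\{2n+1:n\in\omega\}$, which is still $pr$-equivalent to $R_Z$); and that $\#(0^X)[s]\leq \#(0^Z)[s]$ pointwise (otherwise replace $Z$ with $X\vee Z$, still $pr$-equivalent to $R_Z$ since $R_X\leq_{pr} R_Z$, and still infinite as both $X$ and $Z$ are).

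Next I would construct $Y=\bigcup_s\sigma_s$ by stages with $l^{\sigma_s}=s$, using the cycle machinery of Section~\ref{ssct:incomparability} alternating between
\[
	P_e:\ p_e\text{ does not reduce }R_Y\leq_{pr} R_X,\qquad Q_e:\ p_e\text{ does not reduce }R_Z\leq_{pr} R_Y.
\]
The construction will maintain the invariant $\#(0^X)[s]\leq \#(0^{\sigma_s})\leq \#(0^Z)[s]$, which by Proposition~\ref{prop:growth-rate} with $h(s)=s$ immediately gives $R_X\leq_{pr} R_Y\leq_{pr} R_Z$. The strategy within an open $P_e$-cycle is to push $\#(0^Y)$ upward toward $\#(0^Z)$: at stage $s+1$, set $\sigma_{s+1}=\catt{\sigma_s}{0}$ iff $\#(0^{\sigma_s})<\#(0^Z)[s+1]$, and $\sigma_{s+1}=\catt{\sigma_s}{1}$ otherwise. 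Within a $Q_e$-cycle, the dual rule pushes $\#(0^Y)$ downward toward $\#(0^X)$: append $0$ iff $\#(0^{\sigma_s})<\#(0^X)[s+1]$. A short case analysis using $\#(0^X)\leq \#(0^Z)$ shows that both rules preserve the invariant. A cycle closes as soon as a primitive recursive check detects the intended counterexample (for $P_e$, some $s'\leq s$ with $p_e(s')[s]\!\downarrow$ and $\#(0^Y)[s']>\#(0^X)[p_e(s')]$; analogously for $Q_e$).

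The hard part will be proving that every cycle eventually closes, whence the strict reductions $R_X<_{pr} R_Y<_{pr} R_Z$ follow from the $P$- and $Q$-requirements. For an open $P_e$-cycle, observe that the nonnegative gap $\#(0^Z)[s]-\#(0^{\sigma_s})$ is nonincreasing and strictly drops by $1$ whenever $s+1\in Z$ and the gap is positive; since $Z$ is infinite, the gap vanishes at some stage $s_1$, and $\#(0^Y)[s]=\#(0^Z)[s]$ thereafter inside the cycle. The hypothesis $R_Z\not\leq_{pr} R_X$ (forced by $R_X<_{pr} R_Z$) implies that for the given $p_e$ the set $\{s:\#(0^Z)[s]>\#(0^X)[p_e(s)]\}$ must be infinite, since a finite exception set could be patched by modifying $p_e$ at finitely many inputs to yield a primitive recursive reduction $R_Z\leq_{pr} R_X$. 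Choosing any such $s'\geq s_1$ closes the cycle. The $Q_e$-case is symmetric: since $\#(0^X)\to\infty$, eventually $\#(0^Y)[s]=\#(0^X)[s]$ inside the cycle, and the same unboundedness yields an $s'$ with $\#(0^Z)[s']>\#(0^X)[p_e(s')]=\#(0^Y)[p_e(s')]$. Primitive recursiveness of $Y$ is routine given Section~\ref{ssct:listing} and Remark~\ref{rem:CTT}, since every stage's tests are primitive recursive in $s$.
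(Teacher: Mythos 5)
Your proof is correct and follows the same skeleton as the paper's (alternating $P_e$/$Q_e$-cycles, the invariant $\ZeX[s]\leq\ZeY[s]\leq\ZeZ[s]$, and the same normalizations), but the implementation is genuinely different in two ways. First, where the paper copies $Z$ (resp.\ $X$) into $Y$ bit by bit and then runs an explicit transition phase to re-equalize the zero-counts, you control only the counting function $\ZeY$ directly, and your single rule absorbs the transition phase automatically; this is cleaner. Second, and more substantially, you diagonalize against the growth-rate characterization of Proposition~\ref{prop:growth-rate} rather than against explicit counterexample pairs as in Section~\ref{ssct:counterexample}: your closure condition is a violation $\ZeY[s']>\ZeX[p_e(s')]$, and your closure argument is a direct ``infinitely many violations, else patch finitely many values of $p_e$'' argument, whereas the paper argues by contradiction that a never-closing cycle makes $Y$ a tail-copy of $Z$ and hence $R_Z\leq_{pr}R_X$. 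Both routes work; yours trades the combinatorics of strings for arithmetic of counting functions, at the cost of leaning entirely on Proposition~\ref{prop:growth-rate} (note the requirements as you satisfy them are really ``$p_e$ does not witness the growth-rate condition,'' which suffices since every potential reduction yields such a witness). One step to tighten: in the $Q_e$-closure you write $\ZeX[p_e(s')]=\ZeY[p_e(s')]$, but this equality only holds when $p_e(s')$ lands in the equalized portion of the current cycle; if $p_e(s')$ stays below the equalization stage for all large $s'$ in your infinite set, then $\ZeY[p_e(s')]$ is bounded while $\ZeZ[s']\to\infty$, so the counterexample still appears --- cleanest is to run the contrapositive: if the cycle never closes then $\ZeZ[s]\leq\ZeY[\max(p_e(s),m)]=\ZeX[\max(p_e(s),m)]$ for a suitable constant $m$ past the equalization stage, contradicting $R_Z\nleq_{pr}R_X$.
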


\begin{proof}
We will satisfy the following requirements, for every $e\in \omega$:
 \begin{align*}
 &P_e:  \text{$p_e$ does not reduce $R_{Y}$ to $R_{X}$,}\\
  &Q_{e}: \text{$p_e$ does not reduce $R_{Z}$ to $R_{Y}$,}\\
  &M: R_X \leq_{pr} R_{Y},\\
  &N: R_{Y} \leq_{pr} R_{Z},
\end{align*}
where $\set{p_e}_{e\in\omega}$ is a computable listing of all primitive
recursive functions, see Remark~\ref{ssct:listing}.

Assume that $f: R_{X} \leq_{pr} R_{Z}$.  Assume also, without losing
generality, that $0 \in X\cap Z$, and that $Z$ is both infinite and co-infinite.

\bigskip
\subsection*{The environment}
At stage $s+1$ we inherit from stage $s$ a finite binary string
$\sigma^{Y}_{s}$ of length $s+1$; moreover we will let
$\sigma^{X}_{s}=X\restriction s+1$ and $\sigma^{Z}_{s}=Z\restriction s+1$.
For $U \in \{X, Y, Z\}$ we will denote $\ZeU[s]=\ZeUS$ (see
Section~\ref{ssct:strings} for the notation $\#\left(0^{\tau}\right)$, where
$\tau$ is any finite binary string).

\bigskip
\subsection*{The strategies}

Let us sketch the strategy to achieve $R_{Y} \nleq_{pr} R_{X}$. When we
attack for the first time the requirement at stage $s_{0}$ we are given the
strings $\sigma^{X}_{s_{0}}, \sigma^{Y}_{s_{0}}, \sigma^{Z}_{s_{0}}$, for
which we have guaranteed that $\ZeY[s_{0}]=\ZeZ[s_{0}]$.

We open the so called $P_{e}$-cycle: until $p_{e}$ does not show a
counterexample to $R_{Y} \leq_{pr} R_{X}$, we keep copying larger and larger
pieces of $Z$ in $Y$, so that starting from the input $s_0+1$  the set $Y$
looks like $Z$ from the input $s_0+1$. If this process goes on forever, then
we would eventually get $R_{Z} \leq_{pr} R_{Y}$: the initial segment
$\sigma^{Z}_{s_{0}}$ of $Z$ which is not copied by the copying procedure can
be mapped by the reduction to $\sigma^{Y}_{s_{0}}$ as the two strings have
the same number of $0$'s; if $i<s_0+1$ is such that $\sigma^{Z}_{s_{0}}(i)=1$
(i.e.\ $Z(i)=1$)) then the reduction maps $i$ to $0 \in Y$. Thus, eventually
we get that $p_{e}$ does show a counterexample to $R_{Y} \leq_{pr} R_{X}$,
otherwise $R_{Y} \leq_{pr} R_{X}$, but then $R_{Z} \leq_{pr} R_{Y} \leq_{pr}
R_{X}$.

When a counterexample shows up, we close the $P_{e}$-cycle and we move to
next requirement, opening the $Q_{e}$-cycle. (In fact before opening the
$Q_{e}$-cycle, we have to go through a transition phase to reach a stage $t$
at which $\ZeY[t]=\ZeX[t]$.) This also shows that $P_{e}$ is eventually
satisfied.

The strategy to achieve
$R_{Z} \nleq_{pr} R_{Y}$ is similar, opening and closing the so called
$Q_{e}$-cycle:  until $p_{e}$ does not show a counterexample to $R_{Z}
\leq_{pr} R_{Y}$ we keep copying larger and larger pieces of $X$ in $Y$, so
that starting from the input $s_0+1$ (where $s_0$ is when the cycle was
opened) the set $Y$ looks like $X$ from $s_0+1$.
In order to implement this procedure in a correct way, we require the following: when
we start the $Q_{e}$-cycle at $s_{0}$, we have $\ZeY[s_{0}]=\ZeX[s_{0}]$.

Again, the $Q_{e}$-cycle cannot go on forever, otherwise we would get
$R_{Z}\leq_{pr} R_{Y}$ (since $p_{e}$ never shows a counterexample), but on
the other hand the copying procedure would give $R_{Y} \leq_{pr} R_{X}$,
yielding a contradiction. After a counterexample shows up, there will be a
transition phase, at the end of which we will reach a stage $t$ at which
$\ZeY[t]=\ZeZ[t]$.

It remains to explain how we achieve that $R_{X}\leq_{pr} R_{Y} \leq_{pr}
R_{Z}$. For this, we guarantee that at each step $s$ we have
\[
\ZeX[s]\leq \ZeY[s] \leq \ZeZ[s],
\]
so that we can search in a bounded way for the images in $Y$ of the $0$'s in
$\sigma^{X}_{s}$, and for the images in $Z$ of the $0$'s in $\sigma^{Y}_{s}$.
This, together with the facts that $s$ is in the domains of both
$\sigma^{X}_{s}$ and $\sigma^{Y}_{s}$, and the mappings $s\mapsto
\sigma^{U}_{s}$ are primitive recursive, will give the desired reductions.

\begin{remark}\label{rem:ass1}
As $R_X \leq_{pr} R_Z$, we may assume that if $\sigma \subset X, \tau \subset
Z$ have the same length then $\ZeS\leq \ZeA$: for this, one can replace $Z$ with the join $X
\lor Z$ if needed.
\end{remark}

\subsection*{The construction}
The construction is in stages.

\medskip

\subsubsection*{Stage $0$}
Let $Y(0)=1$, and $\sigma^{Y}_{0}=\lambda$. \emph{Open the $P_{0}$-cycle}.
Notice that $\ZeX[0]= \ZeY[0]=\ZeZ[0]=0$.

\bigskip

\subsubsection*{Step $s+1$} We distinguish the two relevant cases:

\smallskip

\noindent\textsc{Case 1)} Suppose that we are within a previously opened $P_{e}$-cycle which
has not been declared closed yet. We assume by induction that when we
opened (say at $s_{0}$) the cycle, we had $\ZeX[s_{0}]\leq
\ZeY[s_{0}]=\ZeZ[s_{0}]$.

\bigskip
\subsection*{Copying phase}
(Copy $R_Z$ in $R_Y$.) If we have not yet moved to the $P_e\rightarrow
Q_e$-transition phase, then let
\[
\sigma^Y_{s+1}=\catt{\sigma^Y_s}{Z(s+1)}.
\]
Notice that by the assumption in Remark~\ref{rem:ass1} after this we still
have
\[
\ZeX[s+1] \leq \ZeY[s+1]=\ZeZ[s+1].
\]
After this, if $p_{e}$ has shown a counterexample to $R_{Y}\leq_{pr} R_{X}$
(as defined in Section~\ref{ssct:counterexample}) then enter the
\emph{$P_e\to Q_e$-transition phase}:

\subsection*{Transition phase}
Carry out the following.
\begin{enumerate}
\item If $\ZeX[s+1] = \ZeY[s+1]$ then exit from the transition phase. We
    \emph{close the $P_{e}$-cycle} and \emph{open the $Q_{e}$-cycle}.

\item If $\ZeX[s+1] < \ZeY[s+1]$ then let
\[
\sigma^Y_{s+1}=\catt{\sigma^Y_s}{1}:
\]
(when $X(s+1)=0$ this has the effect of making $\ZeX[s+1]=\ZeX[s]+1$,
whereas $\ZeY[s+1]=\ZeY[s]$) and go to (1), remaining in this transition
phase.
\end{enumerate}
Notice  that at each stage $t$ within  a $P_{e}$-cycle we have by the
assumption in Remark~\ref{rem:ass1}
\[
\ZeX[t] \leq \ZeY[t] \leq \ZeZ[t],
\]
and when we close the $P_{e}$-cycle, we have
\[
\ZeX[t] = \ZeY[t] \leq \ZeZ[t].
\]

\bigskip

\noindent\textsc{Case 2)} Suppose that we are within a previously opened $Q_{e}$-cycle which
has not been declared closed yet. We assume by induction that when we
opened (say at $s_{0}$) the cycle we had $\ZeX[s_{0}]=\ZeY[s_{0}]\leq
\ZeZ[s_{0}]$.

\subsection*{Copying phase}
(Copy $R_X$ in $R_Y$.) Let
\[
\sigma^Y_{s+1}=\catt{\sigma^Y_s}{X(s+1)}.
\]
Notice that by the assumption in Remark~\ref{rem:ass1} after this we still
have $\ZeX[s+1]= \ZeY[s+1]\leq \ZeZ[s+1]$ if we had $\ZeX[s]= \ZeY[s]\leq
\ZeZ[s]$. After this, if $p_{e}$ has shown a counterexample to
$R_{Z}\leq_{pr} R_{Y}$ (as defined in Section~\ref{ssct:counterexample}) then
enter the \emph{$Q_e\to P_{e+1}$-transition phase}:

\subsection*{Transition phase}
Carry out the following.
\begin{enumerate}
\item If $\ZeY[s+1]=\ZeZ[s+1]$ then exit from the transition phase. We
    \emph{close the $Q_{e}$-cycle} and \emph{open the $P_{e+1}$-cycle}.

\item If $\ZeY[s+1]<\ZeZ[s+1]$ then let
\[
\sigma^Y_{s+1}=\catt{\sigma^Y_s}{0}:
\]
(when $Z(s+1)=1$ this has the effect of making $\ZeY[s+1]=\ZeY[s]+1$
whereas $\ZeZ[s+1]=\ZeZ[s]$) and go to (1), remaining in this transition
phase.
\end{enumerate}
Notice that at each stage $t$ within a $Q_{e}$-cycle we have by the
assumption in Remark~\ref{rem:ass1}
\[
\ZeX[t] \leq \ZeY[t]\leq \ZeZ[t],
\]
and when we close the $Q_e$-cycle we have
\[
\ZeX[t] \leq \ZeY[t]= \ZeZ[t].
\]

\subsection*{The verification} The verification relies on the following lemmas.

\begin{lemma}\label{lem:density-reqs}
For each $e$, the requirements $P_{e}$ and $Q_{e}$ are satisfied.
\end{lemma}

\begin{proof}
As in the proof of Theorem~\ref{thm:incomparability}, it easily follows by
induction that the $P_{e}$-cycle is opened at stage $0$ if $e=0$, and at the
stage at which the $Q_{e-1}$-cycle is closed if $e>0$. The $Q_{e}$-cycle
is opened at the stage at which the $P_{e}$-cycle is closed.

Assume that for
every $i<e$ the $P_{i}$-cycle and the $Q_{i}$-cycle have been closed, and the
corresponding requirements are satisfied. Then at the stage $s_{0}$ (with
$s_{0}=0$ if $e=0$, or $s_{0}$ is the stage when we close the $Q_{e-1}$-cycle
if $e>0$) we open the $P_{e}$-cycle. If $p_{e}$ never shows a counterexample
to $R_{Y} \leq_{pr} R_{X}$ then we claim that $R_{Z} \leq_{pr} R_{X}$, a
contradiction.

To show this claim, notice that in this case (i.e.\ should
$p_{e}$ never show a counterexample to $R_{Y} \leq_{pr} R_{X}$, implying that
$R_{Y}\leq_{pr} R_{X}$) we would have $Y= \sigma^Y_{s_0} \ast Z$. Then $R_{Z}
\leq_{pr} R_{Y}$ by a  primitive recursive function $q$ which matches up the
zeros in $\sigma^{Z}_{s_{0}}$ with those of $\sigma^{Y}_{s_{0}}$ (using that
both strings have the same number of zeros, since $\ZeY[s_{0}]=\ZeZ[s_{0}]$),
$q(i)=0$ if $Z(i)=1$ and $i\leq s_0$, and $q(i)=i$ for $i \ge s_0+1$. It
would follow that $R_{Z} \leq_{pr} R_{X}$, a contradiction.

Thus, at some
stage $p_{e}$ shows a counterexample to $R_{Y} \leq_{pr} R_{X}$, whence $P_{e}$ is
satisfied. Moreover, since $\overline{X}$ is infinite the transition phase of
the cycle will end, since eventually $X$ will produce enough $0$'s to match
up with those which are present in $\sigma^{Y}$ at the beginning of the
$P_{e}\to Q_{e}$-transition phase of the $P_{e}$-cycle. Therefore, the cycle
will be closed.

Similarly, assume that for every $i<e$ the $Q_{i}$-cycle has been closed, and
for every $i\leq e$ the $P_{i}$-cycle has been closed, and the corresponding
requirements are satisfied. If $s_{0}$ is the stage at which we open the
$Q_{e}$-cycle (that is when we close the $P_{e}$-cycle) and for every $s\ge
s_{0}$ we never close the cycle, then $R_{Z} \leq_{pr} R_{Y}$, and thus an
argument similar to the one given above would entail that $R_{Z}$ would be
$pr$-reducible to $R_{X}$, as the construction would ensure in this case that
$Y=\sigma^Y_{s_0}\ast X$ and $\ZeX[s_{0}]=\ZeY[s_{0}]$, giving that $R_Y
\leq_{pr} R_X$. Finally, the $Q_e\to P_{e+1}$-transition phase ends, since $Z$
is infinite.

Hence, all $P$- and $Q$- requirements are satisfied.
\end{proof}

\begin{claim}
$Y$ is primitive recursive.
\end{claim}
\begin{proof}
The function $s\mapsto \sigma^{Y}_s$ is primitive recursive and
$Y(s)=\sigma^{Y}_{s+1}(s)$.
\end{proof}

\begin{lemma}
$R_{X} \leq_{pr} R_{Y} \leq_{pr} R_{Z}$.
\end{lemma}

\begin{proof}
We need to define two primitive recursive functions $g,h$ which provide
reductions $g \colon R_{X}\leq_{pr} R_{Y}$ and $h\colon R_{Y} \leq_{pr} R_{Z}$. Using
that the functions $q^{Y}, q^{Z}$ where $q^{Y}(s)= \sigma^{Y}_{s}$ and
$q^{Z}(s)=\sigma^{Z}_{s}$ are primitive recursive, and at each stage $t$ we
have that $\ZeX[t]\leq \ZeY[t]$ define
\[
g(s)=
\begin{cases}
0, &\textrm{if $X(s)=1$},\\
\min \{i < l^{Y}_{s}: \sigma^{Y}_{s}(i) =0\,\&\, (\forall j<s)[i\ne g(j)]\},
&\textrm{if $X(s)=0$}.
\end{cases}
\]
Similarly, using that at each stage $t$ we have $\ZeY[t]\leq  \ZeZ[t]$ we can
define
\[
h(s)=
\begin{cases}
0, &\textrm{if $Y(s)=1$},\\
\min \{i< l^{Z}_{s}: \sigma^{Z}_{s}(i)=0\,\&\,
(\forall j<s)[i\ne h(j)]\}, &\textrm{if $Y(s)=0$}.
\end{cases}
\]
It is not hard to see that $g$ and $h$ provide the desired $pr$-reductions.
\end{proof}

The last lemma ensures that the global requirements $M$ and $N$ are both satisfied. In combination with Lemma~\ref{lem:density-reqs}, this means that $R_Y$ lies strictly in between $R_X$ and $R_Z$, as desired. Theorem~\ref{thm:density} is proved.
\end{proof}

Upwards and downwards density are immediate consequences of
Theorem~\ref{thm:incomparability}, the existence of infima
(Theorem~\ref{theor:lattice}), and Theorem~\ref{thm:density}:

\begin{corollary}\label{cor:up-low}
If $R<_{pr} \Id$, then there are $S_0,S_1$ such that
\[
S_0<_{pr}R<_{pr}S_1<_{pr} \Id.
\]
\end{corollary}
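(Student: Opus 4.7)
The plan is to obtain the two witnesses separately, using density for $S_1$ and combining incomparability with the existence of meets for $S_0$.

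For the upper witness $S_1$, I would simply apply Theorem~\ref{thm:density} to the strict pair $R <_{pr} \Id$. This immediately yields a punctual $S_1$ with $R <_{pr} S_1 <_{pr} \Id$, which is exactly what is needed.

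For the lower witness $S_0$, density alone is not enough because we have not yet produced anything strictly below $R$. The idea is to manufacture something below $R$ by taking a meet with an incomparable degree. Since $R <_{pr} \Id$, Theorem~\ref{thm:incomparability} provides a punctual $T$ with $T \mid_{pr} R$. By Theorem~\ref{theor:lattice}, the infimum $S_0 := R \wedge T$ exists in $\mathbf{Peq}$, and by the construction of the meet (see Definition~\ref{def:joinsets}) it is again a punctual equivalence relation, i.e.\ non-finite (since the minimum of two functions tending to infinity tends to infinity, so the constructed $Z_1$ is coinfinite). By definition of infimum, $S_0 \leq_{pr} R$ and $S_0 \leq_{pr} T$. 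The main (and only) point to verify is that the first inequality is strict: if we had $S_0 \equiv_{pr} R$, then from $S_0 \leq_{pr} T$ we would conclude $R \leq_{pr} T$, contradicting $T \mid_{pr} R$. Hence $S_0 <_{pr} R$, completing the proof.

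There is no real obstacle here beyond the two-line verification of strictness of the meet; the corollary is essentially a bookkeeping combination of the three theorems cited in its statement.
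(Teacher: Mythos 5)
Your proposal is correct and follows exactly the paper's own argument: upward density via Theorem~\ref{thm:density} applied to $R <_{pr} \Id$, and downward density by taking the meet of $R$ with an incomparable degree from Theorem~\ref{thm:incomparability}. The strictness verification you spell out (if $R \wedge T \equiv_{pr} R$ then $R \leq_{pr} T$, contradicting incomparability) is the same short argument the paper leaves implicit.
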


\begin{proof}
Upwards density (i.e.\ existence of $S_{1}$) is a particular case of
Theorem~\ref{thm:density}. For downward density (i.e.\ existence of $S_{0}$),
recall that if $R$ is a punctual equivalence relation, then by
Theorem~\ref{thm:incomparability}, there exists $S$ such that $R \mid_{pr} S$:
thus $S_{0}:= R\wedge S$ is a punctual equivalence relation such that $S_{0}
<_{pr} R$.
\end{proof}

We now combine the density strategy of the previous theorem with the
incomparability strategy exploited in Theorem~\ref{thm:incomparability}.

\begin{theorem}[Density plus incomparability]\label{thm:density-plus-incomparability}
If $R_{X}<_{pr} R_{T} <_{pr} R_{Z}$ are punctual equivalence relations,
then there exists a primitive recursive set $Y$ such that $R_{X}<_{pr} R_{Y}
<_{pr}R_{Z}$ and $R_{T} \mid_{pr} R_{Y}$.
\end{theorem}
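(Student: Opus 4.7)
The plan is to merge the density construction of Theorem~\ref{thm:density} with the incomparability construction of Theorem~\ref{thm:incomparability}. We build $Y=\bigcup_s \sigma^Y_s$ by stages, satisfying, besides the global requirements $M\colon R_X\leq_{pr} R_Y$ and $N\colon R_Y\leq_{pr} R_Z$, four families of negative requirements for each $e\in\omega$:
\begin{align*}
P_e &: p_e \text{ does not reduce } R_Y \text{ to } R_X,\\
Q_e &: p_e \text{ does not reduce } R_Z \text{ to } R_Y,\\
A_e &: p_e \text{ does not reduce } R_T \text{ to } R_Y,\\
B_e &: p_e \text{ does not reduce } R_Y \text{ to } R_T.
\end{align*}
As in Theorem~\ref{thm:density}, by replacing $Z$ with $X\lor Z$ if necessary we may assume $\#(0^X)[s]\leq \#(0^Z)[s]$ for all $s$, and we preserve throughout the construction the invariant $\#(0^X)[s]\leq\#(0^Y)[s]\leq\#(0^Z)[s]$, which (via the bounded-search functions $g,h$ at the end of the proof of Theorem~\ref{thm:density}) secures $M$ and $N$.

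The strategies for $P_e$ and $Q_e$ are exactly those of Theorem~\ref{thm:density}: during a $P_e$-cycle we copy $Z$ into $Y$ until $p_e$ shows a counterexample to $R_Y\leq_{pr} R_X$, and during a $Q_e$-cycle we copy $X$ into $Y$ until $p_e$ shows a counterexample to $R_Z\leq_{pr} R_Y$. For the two new families, an $A_e$-cycle is structurally identical to a $Q_e$-cycle (copy $X$ into $Y$) but waits for a counterexample to $R_T\leq_{pr} R_Y$: were the cycle never to close, then from the opening stage $s_0$ we would have $Y=\sigma^Y_{s_0}\ast X$, hence $R_Y\leq_{pr} R_X$, and combined with the failure to find a counterexample this would yield $R_T\leq_{pr} R_Y\leq_{pr} R_X$, contradicting $R_X<_{pr} R_T$. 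Symmetrically, a $B_e$-cycle copies $Z$ into $Y$ (like $P_e$) but waits for a counterexample to $R_Y\leq_{pr} R_T$: if it never closed, then $Y=\sigma^Y_{s_0}\ast Z$, so $R_Z\leq_{pr} R_Y\leq_{pr} R_T$, contradicting $R_T<_{pr} R_Z$.

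The main bookkeeping point is to order the four cycle types so that the closing invariant of each matches the opening invariant of the next. We cycle through the requirements in the order
\[
P_0,\, Q_0,\, B_0,\, A_0,\, P_1,\, Q_1,\, B_1,\, A_1,\,\ldots,
\]
which works because $P_e$ and $B_e$ both open when $\#(0^Y)=\#(0^Z)$ and close (after the $P\to Q$-style transition phase that pads $Y$ with $1$'s until $X$ catches up) with $\#(0^X)=\#(0^Y)$, while $Q_e$ and $A_e$ both open when $\#(0^X)=\#(0^Y)$ and close (after the $Q\to P$-style transition phase padding $Y$ with $0$'s until $Z$ catches up) with $\#(0^Y)=\#(0^Z)$. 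The transition phases terminate because $\overline{X}$ and $Z$ are infinite, exactly as in Theorem~\ref{thm:density}.

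The verification is then parallel to those of Theorems~\ref{thm:incomparability} and \ref{thm:density}: the map $s\mapsto\sigma^Y_s$ is primitive recursive by construction; the maintained invariant yields primitive recursive reductions $R_X\leq_{pr} R_Y\leq_{pr} R_Z$; and each negative requirement is shown inductively to be addressed by a cycle that eventually opens and then closes, by the contradictions sketched above. The only real obstacle is choosing the cyclic order of the four requirement types so that all opening/closing invariants line up; the schedule above accomplishes this cleanly, and combined with $P_e$ and $Q_e$ we obtain $R_X<_{pr} R_Y<_{pr} R_Z$, while $A_e$ and $B_e$ together give $R_T\mid_{pr} R_Y$.
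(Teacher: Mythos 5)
Your proof is correct and follows essentially the same route as the paper: modify the cycle machinery of Theorem~\ref{thm:density} so that the diagonalization targets involve $R_T$. The only difference is that the paper is leaner: it keeps just the two cycle types of the density proof and merely swaps their counterexample tests (waiting for counterexamples to $R_Y\leq_{pr}R_T$ during the $Z$-copying cycles and to $R_T\leq_{pr}R_Y$ during the $X$-copying cycles), since the strict inequalities $R_X<_{pr}R_Y<_{pr}R_Z$ then come for free from $R_T\mid_{pr}R_Y$ together with $M$, $N$, and $R_X<_{pr}R_T<_{pr}R_Z$; your four-family schedule $P_e,Q_e,B_e,A_e$ is redundant but the opening/closing invariants do line up as you claim, so the construction goes through.
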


\begin{proof}
Suppose that $R_{X}<_{pr} R_{T} <_{pr} R_{Z}$ are punctual equivalence
relations. To build $Y$, a trivial modification of Theorem~\ref{thm:density}
suffices.

In the previous proof, we close the $P_{e}$-cycle in Case~1 of Step~$s+1$ when we see
that $p_{e}$ has shown a counterexample to $R_{Y}\leq_{pr} R_{X}$. For the
purpose of the present proof, we now ask to \emph{close the $P_{e}$-cycle} in
Case~1 of Step~$s+1$ when we have seen that $p_{e}$ has shown  a
counterexample to $R_{Y}\leq_{pr} R_{T}$, and we have matched up through the
transition phase $\ZeX= \ZeY$:  should $p_{e}$ never show a counterexample to
$R_{Y}\leq_{pr} R_{T}$, then (as in the proof of Theorem~\ref{thm:density})
our copying phase of Case ~1 would end up with making $R_{Z} \leq_{pr} R_{Y}$,
giving $R_{Z}\leq_{pr} R_{T}$, a contradiction.

Similarly, here we ask to \emph{close
the $Q_{e}$-cycle} in Case~2 of Step~$s+1$ when we see that $p_{e}$ shows  a
counterexample to $R_{T}\leq_{pr} R_{Y}$, and we have matched up through the
transition phase $\ZeY= \ZeZ$. Should $p_{e}$ never show a counterexample to
$R_{T}\leq_{pr} R_{Y}$, then (as in the proof of Theorem~\ref{thm:density})
our copying phase of Case ~2 would end up with making $R_{Y} \leq_{pr} R_{X}$,
giving $R_{T}\leq_{pr} R_{X}$, a contradiction.
\end{proof}

\begin{remark}\label{rem:new-proof}
Notice that the two previous theorems provide another proof of
Theorem~\ref{thm:incomparability}: Indeed, given a punctual $R<_{pr} \mathrm{Id}$, it is enough
to pick by Corollary~\ref{cor:up-low}  $S_{0}, S_{1}$ such that $S_{0} <_{pr}
R <_{pr} S_{1}$, so that by density plus incomparability there exists
$S \mid_{pr} R$, with the stronger specification that $S$ lies between $S_{0}$
and $S_{1}$.

Notice also that by a straightforward extension of the argument in
Theorem~\ref{thm:density-plus-incomparability} (in the same vein as in the
argument for Corollary~\ref{cor:antichain}), one can show that if $R<_{pr} S$
then one can build an infinite antichain whose members all lie between $R$
and $S$.
\end{remark}

\section{Join- and meet-reducibility}

In a poset $\langle P, \leq \rangle$ an element $a \in P$ is
\emph{join-reducible} if in $P$ there are $b, c < a$ such that $a$ is the
join of $b,c$, and  $a$ is \emph{meet-reducible} if there are $b, c > a$ such
that $a$ is the meet of $b,c$.

Before showing that in $\Peq$ every element is join-reducible, and every $R<_{pr} \mathrm{Id}$ is
meet-reducible, let us introduce some notations and simple observations which
will be useful in the rest of this section.

In analogy with the principal function $p_{\overline{X}}$ of the complement
$\overline{X}$ (where $X\subseteq \omega$), given a string $\sigma \in
2^{<\omega}$, let also $p_{\overline{\sigma}}$ denote the order preserving
finite bijection $p_{\overline{\sigma}}: \{n: n < \ZeS\} \longrightarrow
\overline{\sigma}$ (the notation $\overline{\sigma}$ has been introduced in
Section~\ref{ssct:strings}). Again in analogy with what we have done for sets
(see Definition~\ref{def:joinsets}), we give the following definition.

\begin{emdef}
Given $\sigma, \tau \in 2^{<\omega}$ such that $l^{\sigma}=l^{\tau}=h$ and
$\ZeS=\ZeA=m$ let $\sigma \lor \tau$ be the string with $l^{\sigma \lor
\tau}=h$ and such that $(\sigma \lor \tau)(i)=0$ if and only if
$i=\min(p_{\overline{\sigma}}(n), p_{\overline{\tau}}(n))$, for some $n<m$.
Dually, define $\sigma \wedge \tau$ to be the string with $l^{\sigma \wedge
\tau}=h$ and such that $(\sigma \wedge \tau)(i)=0$ if and only if
$i=\max(p_{\overline{\sigma}}(n), p_{\overline{\tau}}(n))$, for some $n<m$.
\end{emdef}
Notice that for $\sigma, \tau$ as in the definition, we have
$\ZeSlA=\ZeSwA=m$.

\begin{lemma}\label{lem:simple-obs-on-wedge-lor}
Let $(\sigma_{0}, \sigma_{1})$ be a pair of strings such that $l^{\sigma_0}=
l^{\sigma_1}=h$ and $\ZeSz=\ZeSu$; let $(\tau_{0}, \tau_{1})$ be another pair
of strings such that $l^{\tau_0}= l^{\tau_1}=h'$ and $\ZeAz=\ZeAu$; finally,
let $Y_0,Y_1$ be a pair of sets such that $\sigma_0 \subset Y_0$ and
$\sigma_1 \subset Y_1$. Then, for an operation $\square \in \{\lor, \wedge\}$,
\begin{enumerate}
  \item for every $i<h'$ we have
\[
(\sigma_{0}
\widehat{\phantom{\alpha}} \tau_{0} \square \sigma_{1}
\widehat{\phantom{\alpha}} \tau_{1})(h+i)=(\tau_0 \square \tau_1)(i);
      \]
\item $\sigma_{0} \square \sigma_{1} \subset Y_0 \square Y_1$, and for
    every $i<h$, we have
      \[
(Y_{0} \square Y_{1})(i)=(\sigma_{0} \square \sigma_{1})(i).
      \]
\end{enumerate}
\end{lemma}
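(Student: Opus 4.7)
My plan is to put the string and set versions of $\square$ on a common footing by expressing both in terms of the order-preserving bijections $p_{\overline{\cdot}}$, and then to track where the zeros land. First I would prove a set-level counterpart of Definition~\ref{def:joinsets}: using the identities $\ZOP{X\lor Y}[s]=\max(\ZeX[s],\ZeY[s])$ and $\ZOP{X\wedge Y}[s]=\min(\ZeX[s],\ZeY[s])$ established in the proof of Theorem~\ref{theor:lattice}, and the fact that each function $\#(0^{U})[\cdot]$ is a nondecreasing step function that increments by exactly one at each position of $\overline{U}$, a short induction on $n$ gives
\[
p_{\overline{X\lor Y}}(n)=\min\bigl(p_{\overline{X}}(n),p_{\overline{Y}}(n)\bigr),\qquad p_{\overline{X\wedge Y}}(n)=\max\bigl(p_{\overline{X}}(n),p_{\overline{Y}}(n)\bigr),
\]
whenever the principal functions are defined. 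This matches the shape of the string-level definition of $\square$ and lets me treat the two parts of the lemma uniformly.

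For part (1), I would locate the zeros of each concatenation: for $n<m$ the $n$-th zero of $\sigma_j\widehat{\phantom{\alpha}}\tau_j$ is at $p_{\overline{\sigma_j}}(n)<h$, and for $m\le n<m+m'$ it is at $h+p_{\overline{\tau_j}}(n-m)\ge h$. The hypotheses $\ZeSz=\ZeSu=m$ and $\ZeAz=\ZeAu=m'$ are precisely what is needed to make the pairing by index legitimate. Taking $\min$ (or $\max$ in the meet case) componentwise, the first $m$ zeros of $\sigma_0\widehat{\phantom{\alpha}}\tau_0\,\square\,\sigma_1\widehat{\phantom{\alpha}}\tau_1$ remain below $h$, while for $0\le k<m'$ the $(m+k)$-th zero lies at $h+\min(p_{\overline{\tau_0}}(k),p_{\overline{\tau_1}}(k))$, respectively at $h+\max$, which by the set-level observation equals $h+p_{\overline{\tau_0\square\tau_1}}(k)$. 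Reading this back as a statement about characteristic functions yields the desired identity for every $i<h'$.

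For part (2), the hypotheses $\sigma_j\subset Y_j$ together with $\ZeSz=\ZeSu=m$ force $p_{\overline{Y_j}}(n)=p_{\overline{\sigma_j}}(n)<h$ for $n<m$, and $p_{\overline{Y_j}}(n)\ge h$ for $n\ge m$. Hence for $n<m$ the pairs $(p_{\overline{Y_0}}(n),p_{\overline{Y_1}}(n))$ agree with $(p_{\overline{\sigma_0}}(n),p_{\overline{\sigma_1}}(n))$ and their $\min$ or $\max$ stays below $h$, whereas for $n\ge m$ both entries are $\ge h$. Restricting to positions $i<h$ therefore gives $(Y_0\square Y_1)(i)=(\sigma_0\square\sigma_1)(i)$, which simultaneously yields $\sigma_0\square\sigma_1\subset Y_0\square Y_1$ and the displayed equality. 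The only mildly delicate point is the set-level principal-function identity in the first paragraph, since it is not stated explicitly earlier in the paper; once it is in hand, everything else reduces to comparing $\min$ or $\max$ of pairs of positions against the threshold $h$, and I expect no real obstacle.
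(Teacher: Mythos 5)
Your proof is correct and follows essentially the same route as the paper's: both arguments track the zeros of $\square$ by pairing the $n$-th zeros of the two arguments and comparing positions against the threshold $h$, with the first $m$ pairs landing below $h$ and the remaining ones at or above it. The only difference is that you spell out part (2) via the explicit identity $p_{\overline{X\lor Y}}(n)=\min\bigl(p_{\overline{X}}(n),p_{\overline{Y}}(n)\bigr)$ (and dually for $\wedge$), which bridges the counting-function definition of the set-level operations with the principal-function definition of the string-level ones — a step the paper leaves implicit by saying that (2) ``follows easily from (1)''.
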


\begin{proof}
The proof is immediate. Let $\ZeSz=\ZeSu=m$, and $\ZeAz=\ZeAu=m'$. Item (1)
follows from the fact that $\sigma_{0} \widehat{\phantom{\alpha}} \tau_{0}
\square \sigma_{1} \widehat{\phantom{\alpha}} \tau_{1}$ has $m+m'$ zeros: the
first $m$ ones of them (in order of magnitude) come from comparing the pairs
$(p_{\overline{\sigma_0}}(n), p_{\overline{\sigma_1}}(n))$ with $n<m$; and
the last $m'$ ones of them come from comparing the pairs
$(p_{\overline{\sigma_{0} \widehat{\phantom{\alpha}} \tau_{0}}}(m+n),
p_{\overline{\sigma_{1} \widehat{\phantom{\alpha}} \tau_{1}}}(m+n)$ with
$n<m'$, which amounts to comparing the pairs $(p_{\overline{\tau_0}}(n),
p_{\overline{\tau_1}}(n))$ with $n<m'$.

Finally (2) follows easily from (1).
\end{proof}

\begin{theorem}\label{thm:joinreducible}
Each punctual $R_Z$ is join-reducible.
\end{theorem}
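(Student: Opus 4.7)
The plan is to construct, by stages, primitive recursive sets $X$ and $Y$ such that $R_X, R_Y <_{pr} R_Z$ and $R_X \vee R_Y \equiv_{pr} R_Z$, using a construction in the spirit of Theorem~\ref{thm:density}. As in that theorem, one may assume without loss of generality that $Z$ is both infinite and coinfinite (either the normal form representative already satisfies this, or else $R_Z \equiv_{pr} \Id$ and one replaces the representative by, e.g., the set of even natural numbers). The key idea is to maintain throughout the construction the invariant
\[
\#(0^X)[s], \#(0^Y)[s] \leq \#(0^Z)[s] \quad \text{and} \quad \max\bigl(\#(0^X)[s], \#(0^Y)[s]\bigr) = \#(0^Z)[s].
\]
Granted this invariant, Proposition~\ref{prop:growth-rate} with $h(s) = s$ immediately delivers $R_X, R_Y \leq_{pr} R_Z$ and $R_Z \leq_{pr} R_X \vee R_Y$, so that $R_X \vee R_Y \equiv_{pr} R_Z$ comes for free.

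Strict inequality is achieved through diagonalization against the requirements
\[
P_e:\ \exists s\, \#(0^Z)[s] > \#(0^X)[p_e(s)], \qquad Q_e:\ \exists s\, \#(0^Z)[s] > \#(0^Y)[p_e(s)],
\]
which by Proposition~\ref{prop:growth-rate} entail $R_Z \nleq_{pr} R_X$ and $R_Z \nleq_{pr} R_Y$, respectively. The construction proceeds via alternating $P_e$- and $Q_e$-cycles interspersed with transition phases. In a $P_e$-cycle, I would set $X(s+1) = 1$ (freezing $\#(0^X)$ at its current value) and $Y(s+1) = Z(s+1)$ (so that $Y$ copies $Z$ bit-for-bit and the invariant is trivially preserved); the cycle closes as soon as we observe a stage witnessing $P_e$, which must eventually occur because $\#(0^X)$ is frozen at some value $a$ while $\#(0^Z) \to \infty$. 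The ensuing $P_e \to Q_e$-transition then catches $\#(0^X)$ up to $\#(0^Z)$ by setting $X(s+1) = 0$ and $Y(s+1) = Z(s+1)$ at each transition stage: the gap $\#(0^Z) - \#(0^X)$ is unchanged at $0$-positions of $Z$ and shrinks by $1$ at each $1$-position, so since $Z$ is infinite the gap closes in finitely many stages and the invariant is preserved throughout. The $Q_e$-cycle and the $Q_e \to P_{e+1}$-transition are entirely symmetric.

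The main technical obstacle is ensuring that the entire construction is primitive recursive. At each stage, the work consists only of updating a finite counter tracking the current cycle and phase, performing bounded $p_e$-convergence checks via Section~\ref{ssct:listing}, and appending one bit each to $\sigma^X$ and $\sigma^Y$ according to the current rule; all of this is primitive recursive in $s$. Since each $P_e$-cycle and $Q_e$-cycle, as well as the subsequent transition, terminates in finitely many stages (using $\#(0^Z) \to \infty$ for cycle closure and the infiniteness of $Z$ as a set for transition closure), the resulting approximations $s \mapsto \sigma^X_s, \sigma^Y_s$ are primitive recursive, whence so are $X$ and $Y$. Verification lemmas in the vein of those in Theorem~\ref{thm:density} then confirm that each $P_e$ and $Q_e$ is satisfied and that the invariant is preserved at every stage, completing the proof that $R_Z$ is join-reducible.
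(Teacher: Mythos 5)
Your proposal is correct and follows essentially the same route as the paper's proof: two sets built in alternating cycles, one frozen (all $1$'s) for diagonalization while the other copies $Z$ bit-for-bit, followed by a transition phase that equalizes the zero-counts, with the join verified via $\max\bigl(\#(0^X)[s],\#(0^Y)[s]\bigr)=\#(0^Z)[s]$ exactly as in equation~(\ref{equ:maximum}). The only cosmetic differences are that you diagonalize directly against the growth-rate characterization of Proposition~\ref{prop:growth-rate} (where you should note that a witness $s'$ only becomes permanent once $p_e(s')$ has converged below the current stage, so that $\#(0^X)[p_e(s')]$ is already decided) rather than waiting for a counterexample in the sense of Section~\ref{ssct:counterexample}, and that you verify the join by counting zeros rather than via Lemma~\ref{lem:simple-obs-on-wedge-lor}.
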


\begin{proof}
Let $R_{Z}$ be a punctual equivalence relation in normal form. As $\Id
\equiv_{pr} R_{E}$, with $E$ denoting the set of even numbers, we  can always
assume that $Z$ is infinite and coinfinite.

We will build $R_{Y_0},R_{Y_1}<_{pr}R_{Z}$ such that $R_{Y_0}\vee R_{Y_1}=
R_Z$. To do so, we will satisfy the following requirements:
 \begin{align*}
  P_e \colon &\ \text{$p_e$ does not reduce $R_Z$ to $R_{Y_1}$},\\
  Q_e \colon &\  \text{$p_e$ does not reduce $R_Z$ to $R_{Y_0}$},\\
  N \colon &\ R_Z = R_{Y_0} \vee R_{Y_1}.
\end{align*}
Notice that the $N$-requirement is actually requesting that $R_Z = R_{Y_0}
\vee R_{Y_1}$, not just $R_Z \equiv_{pr} R_{Y_0} \vee R_{Y_1}$.

We will build $Y_0,Y_1$ in stages by approximating their characteristic
functions, i.e., $Y_i=\bigcup_{s\in\omega}\sigma^{Y_i}_s$ for
$i\in\set{0,1}$. In the construction at each stage $s$ we will use also the
string $\sigma^Z_s$ which, we recall, is the initial segment of $Z$ with
length $s$.

\subsubsection*{The construction}
We adopt the same terminology and notations as those employed in Theorem
\ref{thm:density}. As in the proof of that theorem, at each stage, the
construction can be either in a \emph{copying phase} or in a \emph{transition
phase}.

\subsubsection*{Stage $0$}
$\sigma^{Y_0}_0=\sigma^{Y_1}_{0}=\lambda$. \emph{Open the $P_{0}$-cycle},
which will be implemented starting from next stage.

\subsubsection*{Stage $s+1$}
We distinguish two cases.

\bigskip

\noindent\textsc{Case 1.}
Suppose that we are within a previously opened cycle $P_e$, which has
not been declared closed yet.  We assume by induction that we have
$\ZeYu[s+1]\leq\ZeYz[s+1]\leq \ZeZ[s+1]$.

\bigskip
\subsection*{Copying phase}
If we have not yet moved to the $P_e\rightarrow Q_e$-transition phase,
then we copy $R_{Z}$ into $R_{Y_{0}}$: let
$\sigma^{Y_0}_{s+1}=\catt{\sigma^{Y_0}_s}{Z(s)}$ and
$\sigma^{Y_1}_{s+1}=\catt{\sigma^{Y_1}_s}{1}$. After this, if $p_e$ shows a
counterexample to $R_{Z}\leq_{pr} R_{Y_{1}}$ then go to the
\emph{$P_e\rightarrow Q_e$-transition phase} which will be implemented
starting from the next stage.

\bigskip
\subsection*{Transition phase}
Suppose that we are within the $P_e\rightarrow Q_{e}$-transition phase. Let
$Y_1(s+1)=0$ and $Y_0(s+1)=Z(s)$. After this, if $\ZeYz[s+1]=\ZeYu[s+1]$ then
\emph{close the $P_{e}$-cycle} and \emph{open the $Q_{e}$-cycle} which will
be implemented starting from next stage; otherwise, stay in this transition
phase.

\bigskip

\noindent\textsc{Case 2.}
Suppose that we are within a previously opened $Q_e$-cycle, which has
not been declared closed yet. We assume by induction that we have
$\ZeYz[s+1]\leq\ZeYu[s+1]\leq \ZeZ[s+1]$.

\bigskip
\subsection*{Copying phase}
If we have not yet moved to the $Q_e\rightarrow P_{e+1}$-transition phase,
then we copy $R_{Z}$ into $R_{Y_{1}}$: let $\sigma^{Y_0}_{s+1}=
\catt{\sigma^{Y_0}_s}{1}$ and
$\sigma^{Y_1}_{s+1}=\catt{\sigma^{Y_1}_s}{Z(s)}$. After this, if $p_e$ shows
a counterexample to $R_{Z}\leq_{pr} R_{Y_{0}}$ then go to the
\emph{$Q_e\rightarrow P_{e+1}$-transition phase} which will be implemented
starting from the next stage; otherwise, stay in this transition phase.

\bigskip
\subsection*{Transition phase}
Suppose that we are within the $Q_e\rightarrow P_{e+1}$-transition phase. Let
$Y_0(s+1)=0$ and $Y_1(s+1)=Z(s)$. After this, if $\ZeYz[s+1]=\ZeYu[s+1]$ then
\emph{close the $Q_{e}$-cycle} and \emph{open the $P_{e+1}$-cycle} which will
be implemented starting from next stage; otherwise, stay in this transition
phase.

\bigskip

Notice that for every $s$, the constructed initial segment $\sigma^{Y_i}_s$ of
$Y_i$ has length $s$.

(We note that the distinction between ``copying phase'' and ``transition
phase'' can be misleading, as in the transition phase of Case 1 we still keep
copying $Z$ into $Y_0$ as we were doing during the copying phase, and
similarly in the transition phase of Case 2 we still keep copying $Z$ into
$Y_1$ as we were doing during the copying phase.)

\subsection*{The verification}

$Y_0$ and $Y_1$ are primitive recursive as $Y_i(s)=\sigma^{Y_i}_{s+1}(s)$,
and the mapping $s \mapsto \sigma^{Y_i}_{s+1}$ is primitive recursive.

The rest of the verification is based on the following lemmas.

\begin{lemma}
The $P$- and $Q$- requirements are satisfied. Moreover, if $s$ is a stage at
which we close a cycle, then $\ZeYz[s]=\ZeYu[s]$.
 \end{lemma}

\begin{proof}
As in the proof of Theorem~\ref{thm:incomparability} and
Theorem~\ref{thm:density}, if $s$ is any stage, then at $s$ we are either in
an open $P_{e}$- or $Q_{e}$-cycle for exactly one $e$.

Eventually any $P$- or $Q$- cycle will be closed. This is easily seen by
induction.  Suppose that at stage $s_{0}$ we open the $P_{e}$-cycle (the
$P_{0}$-cycle is opened at stage $0$). Then eventually $p_{e}$ shows a
counterexample to $R_{Z} \leq_{pr} R_{Y_{1}}$ (thus $P_{e}$ is satisfied),
otherwise our copying procedure would put all fresh elements into $Y_1$,
giving that $R_{Y_{1}}$ is finite, contradicting that $R_{Z}$ is punctual and
thus non-finite. After $p_{e}$ has shown a counterexample, we start the
$P_{e}\to Q_{e}$-transition phase. During the transition we keep all fresh
elements out of $Y_1$. This makes the number of $0$'s of $Y_1$ growing as
fast as possible, while we copy $Z$ in $Y_0$. Eventually, we will witness a
stage $s$ at which $\ZeYz[s]=\ZeYu[s]$; otherwise, $Z$ would be finite
contradicting the fact that $Z$ is infinite. This shows that the
$P_{e}$-cycle is eventually closed, and we open the $P_{e}$-cycle.

By a similar argument we can prove that each $Q_{e}$-cycle is eventually
opened, then closed, and the corresponding requirement is satisfied.
\end{proof}

\begin{lemma}
The $N$-requirement is satisfied.
\end{lemma}

\begin{proof}
We want to show that $Z=Y_0 \vee Y_1$. Let $s_{0}< s_{1}< \ldots$ be the
sequence of stages at which we open a cycle, with $s_{0}=0$. Our argument is
by induction on the index $n$ of $s_{n}$. Assume by induction that, for every
$i< s_{n}$ we have $(Y_{0} \lor Y_{1})(i)=Z(i)$: this is true if $n=0$.
Assume that at $s_{n}$ we open a $P_{e}$-cycle, the other case being similar:
this cycle is closed at the stage $s_{n+1}$.

By construction,
$\sigma^{Y_{0}}_{s_{n+1}}=\sigma^{Y_{0}}_{s_{n}} \widehat{\phantom{\alpha}}
\tau_{0}$ and $\sigma^{Y_{1}}_{s_{n+1}}=\sigma^{Y_{1}}_{s_{n}}
\widehat{\phantom{\alpha}} \tau_{1}$, where
\[
	\tau_{0}=\str{Z(s_{n}),\dots, Z(s_{n+1}-1)}
\text{ and } \tau_{1}= 1^h \widehat{\phantom{\alpha}} 0^k,
\]
for some $h,k$ with $h+k=s_{n+1}-s_{n}$ (here, for $i \in \{0,1\}$, $i^m$
denotes the string of length $m$ giving value $i$ on all its inputs). As in
$\tau_{1}$ the bit $0$ appears only in the final segment $0^{k}$, for every
$i< s_{n+1}-s_{n}$ we have that
\[
(\tau_{0} \lor \tau_{1})(i)=Z(s_{n}+i).
\]
It then follows from Lemma~\ref{lem:simple-obs-on-wedge-lor} that for every
$j < s_{n+1}-s_n$ we have
\[
(Y_{0} \lor Y_{1})(s_n+j)=(\tau_{0} \lor \tau_{1})(j)=Z(s_{n}+j),
\]
giving that $(Y_0\lor Y_1)(i)=Z(i)$ for all $i < s_{n+1}$.
\end{proof}
This concludes the verification.
\end{proof}

By a symmetric argument, one can also show the following.

\begin{theorem}\label{thm:meetreducible}
Any $R_{Z} <_{pr} \Id$ is meet-reducible.
\end{theorem}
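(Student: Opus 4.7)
The plan is to dualize the construction of Theorem~\ref{thm:joinreducible}. The hypothesis $R_Z <_{pr} \Id$ guarantees, via Proposition~\ref{prop:topcrit}, that $\overline{Z}$ is primitive recursively immune, so in particular $\ZeZ[s] \to \infty$; this plays the dual role of the non-finiteness of $R_Z$ in the proof of Theorem~\ref{thm:joinreducible}.

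I would build primitive recursive sets $Y_0, Y_1$ meeting, for every $e$, the requirements
\begin{align*}
P_e \colon & \ p_e \text{ does not reduce } R_{Y_1} \text{ to } R_Z, \\
Q_e \colon & \ p_e \text{ does not reduce } R_{Y_0} \text{ to } R_Z,
\end{align*}
together with the global invariant $\min(\ZeYz[s], \ZeYu[s]) = \ZeZ[s]$ and the side-inequalities $\ZeYi[s] \geq \ZeZ[s]$ for all $s$ and $i \in \{0,1\}$. By Proposition~\ref{prop:growth-rate} together with the explicit formula for the meet given in Definition~\ref{def:joinsets}, these conditions force $R_Z \leq_{pr} R_{Y_i}$ for both $i$ and $R_{Y_0} \wedge R_{Y_1} \equiv_{pr} R_Z$; satisfaction of $P_e$ and $Q_e$ then delivers the strict inequalities $R_Z <_{pr} R_{Y_i}$.

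The construction would alternate dually between $P_e$-cycles, $P_e \to Q_e$ transitions, $Q_e$-cycles, and $Q_e \to P_{e+1}$ transitions, maintaining as inductive invariant that each cycle opens at a stage $s^{\ast}$ with $\ZeYz[s^{\ast}] = \ZeYu[s^{\ast}] = \ZeZ[s^{\ast}]$. In a $P_e$-cycle I would set $Y_0(s) = Z(s)$ (so $\ZeYz$ continues to track $\ZeZ$) and $Y_1(s) = 0$ (so $\ZeYu$ grows at rate one), preserving $\ZeYu \geq \ZeYz = \ZeZ$ and hence $\min = \ZeZ$. The cycle would close as soon as $p_e$ shows a counterexample to $R_{Y_1} \leq_{pr} R_Z$; were no counterexample ever to appear, then $\overline{Y_1}$ would become cofinite, forcing $R_{Y_1} \equiv_{pr} \Id$, and $p_e$ would then witness $\Id \leq_{pr} R_Z$, contradicting $R_Z <_{pr} \Id$. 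The $P_e \to Q_e$ transition then pauses $Y_1$ by assigning $Y_1(s) = 1$ while continuing $Y_0(s) = Z(s)$, and waits for $\ZeZ$ to catch up with the current value of $\ZeYu$; this wait terminates because $\overline{Z}$ is infinite, and at the end of the transition $\ZeYz = \ZeYu = \ZeZ$ again, so the invariant is reestablished in time to open the $Q_e$-cycle. The $Q_e$-cycle and its transition are entirely symmetric, swapping the roles of $Y_0$ and $Y_1$.

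The verification will then be routine, mirroring Theorem~\ref{thm:joinreducible}: primitive recursiveness of the $Y_i$ follows from the primitive recursive stage bookkeeping; satisfaction of each $P_e$ and $Q_e$ requirement follows from the cycle-termination argument above; termination of every transition follows from $\ZeZ[s] \to \infty$; and the invariant $\min(\ZeYz[s], \ZeYu[s]) = \ZeZ[s]$ is preserved phase by phase by a straightforward case analysis on which side of the min is currently pinned. The only genuine subtlety, compared with Theorem~\ref{thm:joinreducible}, lies in the transition phases: one must wait for $\ZeZ$ to catch up to the larger of the two counts, and it is precisely here that the hypothesis $R_Z <_{pr} \Id$ (rather than merely $R_Z$ non-finite) is indispensable.
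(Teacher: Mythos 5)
Your proposal is correct and follows essentially the same route as the paper: dualize the join-reducibility construction by filling $Y_{1-i}$ with $0$'s during the copying phase and $1$'s during the transition, close each cycle via a counterexample whose failure would force $R_{Y_i}\equiv_{pr}\Id$ and hence $\Id\leq_{pr} R_Z$, and read off $R_{Y_0}\wedge R_{Y_1}\equiv_{pr} R_Z$ from the invariant $\min(\ZeYz[s],\ZeYu[s])=\ZeZ[s]$ (the paper gets the literal equality $Z=Y_0\wedge Y_1$ via Lemma~\ref{lem:simple-obs-on-wedge-lor}, but your counting argument is equivalent). One small correction: your closing sentence misplaces where $R_Z<_{pr}\Id$ is indispensable --- transition termination needs only that $\overline{Z}$ is infinite, while the strict hypothesis is what makes the cycle-closure (counterexample) argument work, exactly as you stated earlier in the proposal.
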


\begin{proof}
The requirements are
 \begin{align*}
  P_e\colon &\ \text{$p_e$ does not reduce $R_{Y_1}$ to $R_Z$},\\
   Q_e\colon &\ \text{$p_e$ does not reduce $R_{Y_0}$ to $R_Z$},\\
  M \colon &\ R_Z = R_{Y_0} \wedge R_{Y_1}.
\end{align*}
The proof and the construction are similar to the previous theorem, with the
modifications that whenever in the previous theorem in a copying or
transition phase we added the bit $i$ to $Y_0$ or $Y_1$, we now add the bit
$1-i$. Notice for instance that for every $e$, $p_{e}$ eventually shows a
counterexample to $R_{Y_1}\leq_{pr} R_{Z}$ as otherwise now $Y_{1}$ would be
eventually finite, thus $R_{Y_1}\equiv_{pr} \Id$, and thus $\Id \leq_{pr}
R_{Z}$, a contradiction. So $P_e$ is satisfied. A similar argument shows that
each $Q_e$ is satisfied.

In order to show that $R_Z=R_X \wedge R_Y$, notice that this time (assuming that at
$s_n$ we open a $P_e$-cycle, the other case being similar)
$\sigma^{Y_{0}}_{s_{n+1}}=\sigma^{Y_{0}}_{s_{n}} \widehat{\phantom{\alpha}}
\tau_{0}$ and $\sigma^{Y_{1}}_{s_{n+1}}=\sigma^{Y_{1}}_{s_{n}}
\widehat{\phantom{\alpha}} \tau_{1}$ where $\tau_{1}= 0^h
\widehat{\phantom{\alpha}} 1^k$, for some $h,k$ with $h+k=s_{n+1}-s_{n}$, and
$\tau_{0}=\str{Z(s_{n}),\dots, Z(s_{n+1}-1)}$. It then follows from
Lemma~\ref{lem:simple-obs-on-wedge-lor} (as in $\tau_1$ the $0$'s show up
before the $1$'s) that for every $j < s_{n+1}-s_n$ we have
\[
(Y_{0} \wedge Y_{1})(s_n+j)=(\tau_{0} \wedge \tau_{1})(j)=Z(s_{n}+j).
\]
Thus by induction on the index $n$ of $s_n$ we can  show that  $(Y_0\wedge
Y_1)(i)=Z(i)$ for all $i < s_{n}$.
\end{proof}

\section{Embedding of the diamond lattice}
So far, we highlighted that $\mathbf{Peq}$ is a  remarkably well-behaved
degree structure, being a dense distributive lattice. Moreover, we proved
that  degrees below the top are not distinguishable with respect to join- or
meet-reducibility. In these two remaining sections, we will turn the
perspective upside down, focusing on some fairly unexpected ill-behaviour of
$\mathbf{Peq}$. In particular, in this section we show that some intervals of
punctual equivalence relations $R<_{pr} S$ embed the diamond lattice, and
some other don't. In fact, we will offer a complete characterization of the
intervals which embeds the diamond lattice, by relying on a combinatorial
property of primitive recursive sets $X$ and $Y$, named
$\blacklozenge$-property, which intuitively says that there are infinitely
many initial segments of the natural numbers up to which $X$ and $Y$ have an
equivalent number of zeros.

Given a set $U$, throughout the section we agree, as in the proof of
Theorem~\ref{thm:density}, that $\sigma^{U}_{s}$ denotes the initial segment
of $U$ having length $s+1$ and $\ZeU[s]$ denotes the cardinality of
$\overline{\sigma^{U}_{s}}$. To avoid trivial cases, we also assume that here
we consider only primitive sets that are infinite and coinfinite.

\begin{emdef}
We say that a pair $(X,Y)$ of primitive recursive sets satisfies \emph{the
$\blacklozenge$-property} if there is a pair $(X^{*}, Y^{*})$ of primitive
recursive sets such that $R_{X^*}\equiv_{pr} R_{X}$ and $R_{Y^*}\equiv_{pr}
R_{Y}$ and
\[
(\forall s)(\exists t\geq s) \bigg[ \ZeXs[t]=\ZeYs[t] \bigg].
\]
\end{emdef}

We say that a stage $s$ is an \emph{equilibrium point} for a pair $(X,Y)$ of
primitive recursive sets if
\[
\ZeX[s]=\ZeY[s].
\]

\begin{theorem}\label{thm:notdelta}
Suppose that a pair $(X, Z)$ does not satisfy the $\blacklozenge$-property,
and $R_X <_{pr} R_Z$. Then, there are no $Y_0$ and $Y_1$ such that
\[
R_{X} <_{pr} R_{Y_i} <_{pr} R_Z, \ R_{X} \equiv_{pr} R_{Y_0} \wedge R_{Y_1}, \text{ and } R_{Z} \equiv_{pr} R_{Y_0} \vee R_{Y_1}.
\]
\end{theorem}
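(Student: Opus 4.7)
The plan is to derive a contradiction from the existence of such $Y_0, Y_1$ by choosing a particular pair of primitive recursive representatives of $R_X$ and $R_Z$ that directly exposes the failure of $\blacklozenge$. The natural candidates come from Theorem~\ref{theor:lattice}: letting $X^{\ast}:=Y_0\wedge Y_1$ and $Z^{\ast}:=Y_0\vee Y_1$, these are primitive recursive sets with $R_{X^{\ast}}\equiv_{pr} R_X$ and $R_{Z^{\ast}}\equiv_{pr}R_Z$, and from the explicit construction of meets and joins in that proof we have, for every $t$,
\[
\ZeXs[t]=\min\bigl(\ZOP{Y_0}[t],\ZOP{Y_1}[t]\bigr),\qquad \ZeZs[t]=\max\bigl(\ZOP{Y_0}[t],\ZOP{Y_1}[t]\bigr).
\]
Since $(X,Z)$ fails the $\blacklozenge$-property, applying the definition to the representatives $(X^{\ast},Z^{\ast})$ yields some $s_0$ such that $\ZeXs[t]\neq\ZeZs[t]$ for all $t\geq s_0$, i.e.\ $\ZOP{Y_0}[t]\neq\ZOP{Y_1}[t]$ for all $t\geq s_0$.

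The key combinatorial step is to observe that the functions $\ZOP{Y_0}[\cdot]$ and $\ZOP{Y_1}[\cdot]$ are nondecreasing and each of them can increase by at most $1$ per step. Consequently, their strict order cannot flip without passing through equality: if $\ZOP{Y_0}[t]<\ZOP{Y_1}[t]$, then $\ZOP{Y_0}[t+1]\leq \ZOP{Y_0}[t]+1\leq \ZOP{Y_1}[t]\leq \ZOP{Y_1}[t+1]$, so a reversal would require equality at $t+1$. Combined with the never-equal conclusion above, this forces one of the two inequalities to hold uniformly for all $t\geq s_0$; say, after relabelling, $\ZOP{Y_0}[t]<\ZOP{Y_1}[t]$ for all $t\geq s_0$. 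Then $\ZeXs[t]=\ZOP{Y_0}[t]$ for every $t\geq s_0$.

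It now suffices to invoke Proposition~\ref{prop:growth-rate} twice with the primitive recursive function $h(s)=\max(s,s_0)$: the inequality $\ZeXs[s]\leq\ZOP{Y_0}[h(s)]$ gives $R_{X^{\ast}}\leq_{pr}R_{Y_0}$, and $\ZOP{Y_0}[s]\leq\ZeXs[h(s)]$ gives $R_{Y_0}\leq_{pr}R_{X^{\ast}}$. Hence $R_{Y_0}\equiv_{pr}R_X$, contradicting the assumption $R_X<_{pr}R_{Y_0}$, and completing the argument.

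The main obstacle I anticipate is not technical but bookkeeping: one has to be careful that the specific representatives $X^{\ast},Z^{\ast}$ chosen from $Y_0\wedge Y_1$ and $Y_0\vee Y_1$ are actually primitive recursive (this is exactly the content of Theorem~\ref{theor:lattice}) and that the definition of $\blacklozenge$ indeed applies to them. Once that is arranged, the non-crossing lemma for two nondecreasing integer sequences with unit increments does all the real work, and Proposition~\ref{prop:growth-rate} converts the pointwise comparison into the desired $pr$-equivalence.
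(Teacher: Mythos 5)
Your proof is correct and takes essentially the same route as the paper's: both arguments hinge on the fact that equilibrium points of $(Y_0,Y_1)$ are exactly the equilibrium points of the explicit representatives $Y_0\wedge Y_1$ and $Y_0\vee Y_1$, and that their absence from some point on forces one of $R_{Y_0},R_{Y_1}$ to collapse onto an endpoint of the interval. The only immaterial differences are that the paper argues the contrapositive and lands the contradiction on the join side via an explicitly constructed reduction $R_{Y_0}\vee R_{Y_1}\leq_{pr}R_{Y_0}$, whereas you land it on the meet side via Proposition~\ref{prop:growth-rate}; your explicit non-crossing argument is precisely what justifies the ``without loss of generality'' step in the paper's first lemma.
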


\begin{proof}
We show the contrapositive statement. Suppose that $R_X$, $R_{Y_0}$,
$R_{Y_1}$, and $R_Z$ form a diamond. Without loss of generality, one may
assume that $0\in Y_0$.

\begin{lemma}
The pair $(Y_0,Y_1)$ has infinitely many equilibrium points.
\end{lemma}
\begin{proof}
Suppose that $s^*$ is the last equilibrium point for $(Y_0,Y_1)$. Then,
without loss of generality, we may assume that for any $s > s^*$,
\[
\ZeYz[s] > \ZeYu[s].
\]

Let $n^*:= \ZeYz[s^*+1]$. For a number $m \ge n^{\ast}$, as
$p_{\overline{Y_0}}(m)> s^{*}+1$, we have
\[
	m+1 = \ZeYz[p_{\overline{Y_0}}(m)] > \ZeYu[p_{\overline{Y_0}}(m)],
\]
and thus $p_{\overline{Y_0}}(m)<p_{\overline{Y_1}}(m)$. By the definition of
$Y_0 \lor Y_1$, we deduce that for every $m \ge n^{\ast}$, we have
$p_{\overline{Y_0 \vee Y_1}}(m) = p_{\overline{Y_0}}(m)$. Therefore, the
function
\[
f(x) :=
\begin{cases}
	0, & \text{if $x \in Y_{0} \lor Y_{1}$},\\
	p_{\overline{Y_0}}(l) , & \text{if } x = p_{\overline{Y_0 \vee Y_1}}(l)
    \text{ for some } l < n^{\ast},\\
	x, & \text{otherwise},
\end{cases}
\]
provides a $pr$-reduction from $R_{Y_0} \vee R_{Y_1}$ into $R_{Y_0}$, which
gives a contradiction.
\end{proof}

Let $s_0 < s_1 <s_2 < \dots$ be the sequence of all equilibrium points for
$(Y_0,Y_1)$. We choose an infinite subsequence of equilibrium points
\[
s^*_0 < s^*_1 < s^*_2 < \dots
\]
such that for every $i\in\omega$, $\ZeYz[s^*_i] < \ZeYz[s^*_{i+1}]$.

Let $t_i := \ZeYz[s^*_i]$. By Lemma~\ref{lem:simple-obs-on-wedge-lor} it is
clear that
\[
\ZYv[s^*_i] = \ZYw[s^*_i] = t_i,
\]
and hence, the pair $(X,Z)$ satisfies the $\blacklozenge$-property.
Theorem~\ref{thm:notdelta} is proved.
\end{proof}

\smallskip

On the other hand, the next result shows that the $\blacklozenge$-property is
sufficient for embedding the diamond:

\begin{theorem}\label{thm:delta}
Let $R_X<_{pr} R_Z$  such that $(X,Z) $ satisfies the
$\blacklozenge$-property. There are punctual $R_{Y_0},R_{Y_1}$ such that the
infimum (resp.\ supremum) of $R_{Y_0}$ and $R_{Y_1}$ is $pr$-equivalent to
$R_X$ (resp.\ $R_Z$).
\end{theorem}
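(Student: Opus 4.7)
The plan is to construct primitive recursive sets $Y_0, Y_1$ whose join and meet coincide (as sets) with $Z$ and $X$ respectively, by an alternating ``mode'' construction synchronized at equilibrium points, and in which enough diagonalization is carried out to make each $R_{Y_i}$ strictly between $R_X$ and $R_Z$. First, using the $\blacklozenge$-property, we replace $X$ and $Z$ with the pair $(X^{*}, X^{*} \vee Z^{*})$ provided by Theorem~\ref{theor:lattice}. The passage from $Z^{*}$ to $X^{*} \vee Z^{*}$ preserves all equilibrium points of $(X^{*}, Z^{*})$, since at such a point the two counting functions already coincide, so that the max equals each. Thus we may assume without loss of generality that $\#(0^{X})[s] \leq \#(0^{Z})[s]$ for all $s$ and that the set $E = \{s : \#(0^{X})[s] = \#(0^{Z})[s]\}$ is an infinite primitive recursive set.

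The construction proceeds by stages. At each stage we maintain a current \emph{mode} $m(s) \in \{0,1\}$: in mode $0$ we set $Y_0(s) = Z(s)$ and $Y_1(s) = X(s)$; in mode $1$ we swap. Crucially, $m(s)$ is only ever changed at stages $s \in E$. Besides the global $M$ and $N$, we aim to satisfy, for each $e$ and $i \in \{0,1\}$,
\[
P_{e}^{i} : p_e \text{ does not reduce } R_{Z} \text{ to } R_{Y_{i}}, \quad Q_{e}^{i} : p_e \text{ does not reduce } R_{Y_{i}} \text{ to } R_{X}.
\]
Each requirement is attached to the mode that makes the relevant $Y_i$ look slow (like $X$) or fast (like $Z$): $P_{e}^{0}$ and $Q_{e}^{1}$ need mode $1$, while $P_{e}^{1}$ and $Q_{e}^{0}$ need mode $0$. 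We enumerate the requirements $R_{0}, R_{1}, \dots$ in blocks of four indexed by $e$, and work through them cycle-style, as in the proofs of Theorems~\ref{thm:density} and~\ref{thm:joinreducible}: while working on $R_{k}$, we stay in its mode and check at each stage whether $p_{e}$ shows a counterexample in the sense of Section~\ref{ssct:counterexample}; once one appears, we enter a transition phase that keeps the current mode until the next stage in $E$, where we switch the mode if needed for $R_{k+1}$ and open the next cycle.

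Primitive recursiveness of $Y_0, Y_1$ is immediate, since every check and every ingredient (membership in $X,Z,E$, counterexample test) is primitive recursive. For $M$ and $N$, a straightforward induction on $s$, using that mode switches occur only at stages $s\in E$, shows
\[
\{\#(0^{Y_0})[s],\,\#(0^{Y_1})[s]\} = \{\#(0^{X})[s],\,\#(0^{Z})[s]\}
\]
for every $s$: within a fixed mode interval each $Y_i$ merely copies the bits of $X$ or $Z$, and at an equilibrium switch the identity $\#(0^{X})[t] = \#(0^{Z})[t]$ makes the two sides of the swap agree. Hence $\#(0^{Y_0 \vee Y_1})[s] = \#(0^{Z})[s]$ and $\#(0^{Y_0 \wedge Y_1})[s] = \#(0^{X})[s]$, and the construction from Theorem~\ref{theor:lattice} actually yields $Y_0 \vee Y_1 = Z$ and $Y_0 \wedge Y_1 = X$ set-theoretically. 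For $P_{e}^{0}$ (the other requirements are symmetric), if no counterexample ever shows up then we remain in mode $1$ from some equilibrium $t$ onward and $p_e$ is a genuine $pr$-reduction of $R_Z$ into $R_{Y_0}$; by Proposition~\ref{prop:growth-rate} we obtain a primitive recursive $h$ with $\#(0^{Z})[s] \leq \#(0^{Y_0})[h(s)]$, and for $h(s) > t$ the equilibrium identity gives $\#(0^{Y_0})[h(s)] = \#(0^{X})[h(s)]$. Replacing $h(s)$ by $\max(h(s),t+1)$ on the finite exceptional set produces a reduction witnessing $R_{Z} \leq_{pr} R_{X}$, contradicting $R_{X} <_{pr} R_{Z}$. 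Termination of each transition phase is ensured by the infinity of $E$.

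The main obstacle is the synchronization bookkeeping: every mode switch must preserve the exact identity $\{\#(0^{Y_0})[s], \#(0^{Y_1})[s]\} = \{\#(0^{X})[s], \#(0^{Z})[s]\}$ on the nose, and each diagonalization must close in finite time. Both issues are handled by the equilibrium condition together with the $\blacklozenge$-property, which guarantee that transitions always finish and that the alternating count invariant is maintained throughout the construction.
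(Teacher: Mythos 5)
Your proposal is correct and follows essentially the same route as the paper: normalize via the $\blacklozenge$-property so that $(X,Z)$ has infinitely many equilibrium points with $\#\left(0^{X}\right)[s]\leq\#\left(0^{Z}\right)[s]$ everywhere, alternately copy the bits of $X$ and $Z$ into $Y_0$ and $Y_1$ with swaps permitted only at equilibrium points, and close each diagonalization cycle once a counterexample appears. The only (harmless) deviations are that your extra requirements $Q_e^i$ ($R_{Y_i}\nleq_{pr}R_X$) are redundant given $M$ and $N$, and that you verify $M$ and $N$ directly from the $\max/\min$ identity on the counting functions rather than via the paper's bit-level Lemma~\ref{lem:simple-obs-on-wedge-lor}, which is an equally valid (and slightly more streamlined) way to reach the same conclusion.
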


\begin{proof}
Without loss of generality, assume that $X$ and $Z$ are both infinite,
co-infinite, and $0\in X\cap Z$. Observe also that we may assume that the
following hold
\begin{enumerate}
\item the pair $(X,Z)$ has infinitely many equilibrium points;
\item for all $s$, $\ZeZ[s]\geq \ZeX[s]$.
\end{enumerate}
To see this that this can be assumed, first choose $X,Z$ with infinitely many
equilibrium points; they must exist since $(X,Z)$ has the
$\blacklozenge$-property. Second, replace $Z$ with $X\vee Z$ if needed; note
that if $t$ is an equilibrium point for $(X,Z)$, then by
Lemma~\ref{lem:simple-obs-on-wedge-lor} it should be an equilibrium point for
$(X,X\vee Z)$ as well.

We will build sets $Y_0,Y_1$ in stages, satisfying the following
requirements:
 \begin{align*}
  Q_e \colon &\ \text{$p_e$ does not reduce $R_Z$ to $R_{Y_0}$},\\
  P_e \colon &\ \text{$p_e$ does not reduce $R_Z$ to $R_{Y_1}$},\\
  M  \colon &\  R_{X} = R_ {Y_0} \wedge R_{Y_1},\\
  N \colon &\  R_Z = R_{Y_0} \vee R_{Y_1}.
\end{align*}

It is easy to see that the above requirements are sufficient: in particular,
we do not need to prove that $R_{Y_i}\nleq_{pr} R_X$. Notice also that we
require $R_X$ and $R_Z$ to be in fact equal, and not just $pr$-equivalent, to
$R_ {Y_0} \wedge R_{Y_1}$ and $R_ {Y_0} \vee R_{Y_1}$, respectively:
therefore we get for free that $R_ {Y_0}$ and $R_ {Y_1}$ lie in the interval
determined by $R_X$ and $R_Z$. As always, we will build $Y_0,Y_1$ in stages
by approximating their characteristic functions, i.e.,
$Y_i=\bigcup_{s\in\omega}\sigma^{Y_i}_s$ for $i\in\set{0,1}$. Each string
$\sigma^{Y_i}_s, \sigma^{X}_s, \sigma^{Z}_s$ we define or deal with at a
stage $s$ has length $s+1$.

\subsubsection*{The strategies}
In order to achieve that $p_e$ does not reduce $R_Z$ to $R_{Y_0}$ we open the
$Q_e$-cycle and employ a copying procedure, copying $R_X$ into $R_{Y_0}$,
until we see that $p_e$ shows a counterexample to $R_Z \leq_{pr} R_{Y_0}$.
Meanwhile we employ a corresponding copying procedure, copying $R_Z$ into
$R_{Y_1}$.

When seeing that $p_e$ shows a counterexample at stage, say,
$s+1$, by our assumption on always being $\ZeX[t]\leq \ZeZ[t]$ it may happen
that $\ZeX[s+1] < \ZeZ[s+1]$. If so, before closing the $Q_e$-cycle we open
the so-called $Q_e\to P_e$-transition phase, which consists (still copying
$R_X$ into $R_{Y_0}$ and $R_Z$ into $R_{Y_1}$) in prolonging bit by bit
$\sigma^{Y_0}_{s+1}$ (which the construction has guaranteed to have the same
number of $0$'s as $\sigma^{X}_{s+1}$) with the bits of $X$, and in
prolonging bit by bit $\sigma^{Y_1}_{s+1}$ (which the construction has
guaranteed to have the same number of $0$'s as $\sigma^{Z}_{s+1}$) with the
bits of $Z$, until we reach the next equilibrium point of $(X,Z)$: at this
point we close the $Q_e$-cycle and we open the $P_e$-cycle.

The described procedure
has the goal of making it possible to apply
Lemma~\ref{lem:simple-obs-on-wedge-lor} and conclude that the bits added to
$\sigma^{Y_0}_{s+1}$ and $\sigma^{Y_1}_{s+1}$ since when we opened the
$Q_e$-cycle satisfy
\[
	(\sigma^{Y_0}_{s+1}\lor \sigma^{Y_1}_{s+1})(i)=(X\lor
Z)(i) \text{ and } (\sigma^{Y_0}_{s+1}\wedge \sigma^{Y_1}_{s+1})(i)=(X\wedge Z)(i),
\]
so as to eventually get $Y_0 \lor Y_1= X \lor Z$ and $Y_0 \wedge Y_1= X
\wedge Z$.

In order to achieve that $p_e$ does not reduce $R_Z$ to $R_{Y_1}$, we use (in
an obvious way) a similar strategy, this time copying $R_X$ into $R_{Y_1}$
and $R_Z$ into $R_{Y_0}$; we go into the $P_e \to Q_{e+1}$-transition phase
when $p_e$ shows a counterexample. Finally, after reaching the next
equilibrium point, we close the $P_e$-cycle and open the $Q_{e+1}$-cycle.

\subsection*{The construction} Unless otherwise specified, we adopt the same
terminology and notation employed in Theorem \ref{thm:density}.

\subsubsection*{Stage $0$}
$\sigma^{Y_0}_0=\sigma^{Y_1}_{0}=\str{1}$. \emph{Open the $Q_0$-cycle}.

\subsubsection*{Stage $s+1$}
There are two cases.

\bigskip

\noindent\textsc{Case 1.}
Suppose that we are within a previously opened $Q_e$-cycle, which has
not been declared closed.  We assume by induction that we have
\[
\ZeX[s+1]= \ZeYz[s+1]\leq\ZeYu[s+1]= \ZeZ[s+1],
\]
and the first stage $s'$ of this particular $Q_e$-cycle has the following property
\[
\ZeX[s']= \ZeYz[s'] = \ZeYu[s']= \ZeZ[s'].
\]

\bigskip
\subsection*{Copying phase}
If we have not yet moved to the $Q_e\rightarrow P_e$-transition phase,
then we copy $R_{X}$ into $R_{Y_{0}}$ and copy $R_{Z}$ into $R_{Y_{1}}$ : let
$\sigma^{Y_0}_{s+1}=\catt{\sigma^{Y_0}_s}{X(s+1)}$ and
$\sigma^{Y_1}_{s+1}=\catt{\sigma^{Y_1}_s}{Z(s+1)}$. After this, if $p_e$ shows a
counterexample to $R_{Z}\leq_{pr} R_{Y_{0}}$ then go to the
\emph{$Q_e\rightarrow P_e$-transition phase}, which will be implemented
starting from the next stage.

\bigskip

\subsection*{Transition phase}
Suppose that we are within the
    transition phase of a previously opened $Q_e$-cycle, which has not
been declared closed yet. Let
    $\sigma^{Y_0}_{s+1}=\catt{\sigma^{Y_0}_s}{X(s+1)}$ and
    $\sigma^{Y_1}_{s+1}=\catt{\sigma^{Y_1}_s}{Z(s+1)}$. After this, if
    $\ZeYz[s+1]= \ZeYu[s+1]$, then \emph{close the $Q_e$-cycle}, and
    \emph{open the $P_{e}$-cycle}, which will be processed starting from
    next stage.

\bigskip

\noindent\textsc{Case 2.} Suppose that we are within a previously opened $P_e$-cycle, which has
not been declared closed yet. We assume by induction that
\[
	\ZeX[s+1]= \ZeYu[s+1]\leq\ZeYz[s+1]= \ZeZ[s+1],
\]
and when we had opened this cycle, we had $\ZeYu[s'] = \ZeYz[s']$.

\subsection*{Copying phase} If we have not yet moved to the
    $P_e\to Q_{e+1}$-transition phase, then let $\sigma^{Y_0}_{s+1}=
    \catt{\sigma^{Y_0}_s}{Z(s+1)}$ and
    $\sigma^{Y_1}_{s+1}=\catt{\sigma^{Y_1}_s}{X(s+1)}$. After this, if
    $p_e$ has shown a counterexample for $p_e\colon R_Z \leq_{pr} R_{Y_1}$, then \emph{move to the
    $P_e\to Q_{e+1}$-transition phase}.

\subsection*{Transition phase} Suppose that we are within the
    transition phase of a previously opened $P_e$-cycle, which has not
    been declared closed yet. Let
    $\sigma^{Y_0}_{s+1}=\catt{\sigma^{Y_0}_s}{Z(s+1)}$ and
    $\sigma^{Y_1}_{s+1}=\catt{\sigma^{Y_1}_s}{X(s+1)}$. After this, if
    $\ZeYz[s+1]= \ZeYu[s+1]$ then \emph{close the $P_e$-cycle}, and
    \emph{open the $Q_{e+1}$-cycle}, which will be processed starting from
    next stage.

\subsection*{The verification}

The sets $Y_0, Y_1$ are primitive recursive as $Y_i(s)=\sigma^{Y_i}_{s}(s)$
(recall that $l^{Y_i}_s=s+1$) and the mapping $s \mapsto \sigma^{Y_i}_{s}$ is
primitive recursive. The rest of the verification is based on the following
lemmas.

\begin{lemma}
The $P$- and $Q$- requirements are satisfied.
\end{lemma}

\begin{proof}
Similarly to the proofs of Theorem~\ref{theor:lattice} and Theorem \ref{thm:density},
it is easily seen by induction that every $P$- or $Q$-cycle is opened and
later closed, and exactly one cycle is open at each stage. Consider for
instance a $Q_e$-cycle, and assume that is it was opened at stage $s_0$, and we started
processing the cycle from $s_0+1$. Assume also that
\[
\ZeX[s_0]= \ZeYz[s_0].
\]
Should $p_e$ never show a counterexample to $R_Z \leq_{pr} R_{Y_0}$ then it
would be $R_Z \leq_{pr} R_X$. Indeed, in this case we would eventually get
$Y_0= \sigma^{Y_0}_{s_0} \ast X$ (see Section~\ref{ssct:strings} for the
notation): thus, $p_e\colon R_Z \leq_{pr} R_{Y_0}$ would imply $R_Z \leq_{pr} R_X$.
Therefore, eventually we do get a counterexample, and requirement $Q_e$ is satisfied.

After $p_e$ shows a counterexample, we start the transition phase: when we
open it (say, at $s_1$) we have
\[
\ZeYz[s_1]\leq\ZeYu[s_1].
\]
By our assumption that $(X,Z)$ has the $\blacklozenge$-property it follows
(by prolonging $\sigma^{Y_0}$ as $X$, and $\sigma^{Y_1}$ as $Z$) that
eventually $\ZeYz$ catches up with $\ZeYu$, thus we reach a stage $s+1$ when
$\ZeYz[s+1]=\ZeYu[s+1]$. At this stage, we close the $Q_e$-cycle and we open
the $P_e$-cycle.

A similar claim holds for $P_e$-cycles. Note that in the second part of the cycle,
the transition phase waits until the inequality $\ZeYu[t] \leq \ZeYz[t]$ reaches a stage $s+1$
such that $\ZeYu[s+1]=\ZeYz[s+1]$.

We also conclude that all $P$- and $Q$-requirements are satisfied.
\end{proof}

\begin{lemma}\label{lemma:Mreq}
The $M$-requirement and the $N$-requirement are satisfied.
\end{lemma}

\begin{proof}
Let $0=s_0< s_1< \ldots$ be an infinite sequence of stages $s$ at which we
have
\[
\ZeX[s]= \ZeYz[s]=\ZeYu[s]= \ZeZ[s].
\]
For instance, this happens when we open cycles.

Let $i\in \omega$ and let $n$
be such that $i < s_{n+1}-s_n$.
Suppose that at $s_n$ we open a $Q$-cycle~--- then $\sigma^{Y_0}_{s_{n+1}}= \sigma^{Y_0}_{s_{n}} \widehat{\phantom{\alpha}}
\tau_0$ and $\sigma^{Y_1}_{s_{n+1}}= \sigma^{Y_1}_{s_{n}}
\widehat{\phantom{\alpha}} \tau_1$, where $\tau_0(i)=X(s_n+1+i)$ and
$\tau_1(i)=Z(s_n+1+i)$. Since the pairs $(\sigma^{Y_0}_{s_{n}},
\sigma^{Y_1}_{s_{n}})$, $(\tau_0, \tau_1)$, and $(Y_0,Y_1)$ satisfy the
assumptions of Lemma~\ref{lem:simple-obs-on-wedge-lor}, it follows by
induction on the index $n$ of $s_n$ that
\begin{align*}
(Y_0 \wedge Y_1)(i)&= (X\wedge Z)(i)\\
(Y_0 \lor Y_1)(i)&= (X\lor Z)(i).
\end{align*}
Hence, we deduce that $R_{Y_0} \wedge R_{Y_1} = R_X$ and $R_{Y_0} \lor R_{Y_1} = R_Z$.
Lemma~\ref{lemma:Mreq} is proved.
\end{proof}

This concludes the verification. Theorem~\ref{thm:delta} is proved.
\end{proof}

Combining Theorem \ref{thm:notdelta} and Theorem \ref{thm:delta}, we obtain
the following.

\begin{corollary}\label{cor:deltapropertydefinable}
An interval $[R,S]$ of $\mathbf{Peq}$ embeds the diamond lattice preserving
$0$ and $1$ if and only if $(R,S)$ has the $\blacklozenge$-property.
\end{corollary}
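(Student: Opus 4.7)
The plan is to derive the corollary directly from Theorems~\ref{thm:notdelta} and~\ref{thm:delta}, which together handle the two directions. First, using the Normal Form Theorem (Theorem~\ref{thm:normal-form}), I write the interval as $[R_X, R_Z]$ for some coinfinite primitive recursive $X, Z$ with $R_X <_{pr} R_Z$; this allows me to translate the $\blacklozenge$-property (stated for pairs of sets) and the two theorems into a statement about the interval $[R,S]$.

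For the backward direction, assume $(R,S)$ satisfies the $\blacklozenge$-property. Theorem~\ref{thm:delta} yields punctual $R_{Y_0}, R_{Y_1}$ with
\[
R_{Y_0} \wedge R_{Y_1} \equiv_{pr} R_X \quad \text{and} \quad R_{Y_0} \vee R_{Y_1} \equiv_{pr} R_Z.
\]
Since $R_X <_{pr} R_Z$, meet and join differ, so $R_{Y_0}$ and $R_{Y_1}$ cannot be $\leq_{pr}$-comparable (any comparability would force meet and join to coincide with one of them). Hence $R <_{pr} R_{Y_i} <_{pr} S$ for $i\in\{0,1\}$ and $R_{Y_0}\mid_{pr} R_{Y_1}$, and the assignment sending $\mathbf{0}, a, b, \mathbf{1}$ of the diamond to $R, R_{Y_0}, R_{Y_1}, S$ respectively is a lattice embedding of the diamond into $[R,S]$ preserving top and bottom.

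For the forward direction, suppose a $(0,1)$-preserving embedding of the diamond into $[R,S]$ exists, and let $R_{Y_0}, R_{Y_1}$ be the images of the two incomparable middle elements. Then
\[
R_X <_{pr} R_{Y_i} <_{pr} R_Z, \quad R_{Y_0}\wedge R_{Y_1}\equiv_{pr} R_X, \quad R_{Y_0}\vee R_{Y_1}\equiv_{pr} R_Z,
\]
which is exactly the configuration ruled out by Theorem~\ref{thm:notdelta} when $(X,Z)$ fails $\blacklozenge$. Taking the contrapositive, $(X,Z)$ satisfies the $\blacklozenge$-property, i.e.\ so does $(R,S)$.

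There is no real obstacle here, since the substantive content has been carried out in the two theorems being combined. The only small care is in the backward direction, where I must observe that the $R_{Y_0}, R_{Y_1}$ produced by Theorem~\ref{thm:delta} are automatically incomparable and strictly between $R$ and $S$; this is immediate from the fact that their meet is strictly below their join.
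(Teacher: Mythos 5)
Your overall route is exactly the paper's: the corollary is obtained by combining Theorem~\ref{thm:delta} (for the backward direction) with the contrapositive of Theorem~\ref{thm:notdelta} (for the forward direction), after passing to normal form via Theorem~\ref{thm:normal-form}. The forward direction as you present it is fine.

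However, the one step you single out as needing care in the backward direction is justified incorrectly. You claim that $R_{Y_0}$ and $R_{Y_1}$ must be incomparable because their meet is strictly below their join, asserting that comparability ``would force meet and join to coincide with one of them.'' In any lattice, if $a\leq b$ then $a\wedge b=a$ and $a\vee b=b$: the meet and the join each coincide with one of the two elements, but they need not coincide with each other. So $R_{Y_0}\leq_{pr}R_{Y_1}$ is perfectly consistent with $R_{Y_0}\wedge R_{Y_1}\equiv_{pr}R_X<_{pr}R_Z\equiv_{pr}R_{Y_0}\vee R_{Y_1}$; it would simply mean $R_{Y_0}\equiv_{pr}R_X$ and $R_{Y_1}\equiv_{pr}R_Z$, which is not a diamond. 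The bare statement of Theorem~\ref{thm:delta} does not exclude this degenerate configuration, so ``immediate from the fact that their meet is strictly below their join'' is not a valid inference. You must instead appeal to the proof of Theorem~\ref{thm:delta}, whose requirements $P_e$ and $Q_e$ explicitly guarantee $R_Z\nleq_{pr}R_{Y_0}$ and $R_Z\nleq_{pr}R_{Y_1}$. Combined with $R_{Y_0}\vee R_{Y_1}\equiv_{pr}R_Z$, these facts do force incomparability (comparability would make the join equivalent to one of the $R_{Y_i}$, giving $R_Z\leq_{pr}R_{Y_i}$) and also strictness of $R_X<_{pr}R_{Y_i}$ (if $R_{Y_0}\equiv_{pr}R_X$ then $R_Z\equiv_{pr}R_X\vee R_{Y_1}\equiv_{pr}R_{Y_1}$, again a contradiction). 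With that substitution your proof is correct and coincides with the paper's.
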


\section{On the intricacy of $\Peq$}
In this final section, we deepen the analysis of $\mathbf{Peq}$, unveiling
further structural complexity. Most notably, we will focus on the
automorphisms of $\Peq$, proving that such a degree structure is neither
rigid nor homogeneous. We will also show that $\Peq$  contains nonisomorphic
lowercones.  These results will require  both to further explore the
consequences of the $\blacklozenge$-property defined above and to introduce
another property, named slowness, concerning the rate at which a primitive
recursive set shows its zeros. We conclude the section by collecting a number
of interesting open questions, which may motivate future work. We are
particularly interested in whether the theory of $\mathbf{Peq}$ is decidable
or not.

\begin{remark} (Redefining the symbol $\ZOP{X}[s]$.)
In this section, for technical reasons, we will take $\ZOP{X}[s]$ to be the
number of $i\leq k$ such that $X(i)=0$, where $k$ is the largest such that
$X(j)\downarrow$ in at most $s$ many steps for all $j\leq k$. So $\ZOP{X}[s]$
is the number of zeroes that we can see in the characteristic function of $X$
after evaluating it for $s$ many steps.
\end{remark}

The next lemma is an analogue of Proposition~\ref{prop:growth-rate}~--- it
expresses $R_X\leq_{pr}R_Y$ in terms of the growth rates of $\ZOP{X}$ and
$\ZOP{Y}$:

\begin{lemma}\label{lem:reductionintermsofcharfunction}
Given any $X,Y$, $R_X\leq_{pr} R_Y$ if and only if there exists a primitive
recursive function $p$ such that for every $s$,
$\ZOP{X}[s]\leq\ZOP{Y}[p(s)]$.
\end{lemma}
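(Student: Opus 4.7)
The plan is to reduce this lemma to Proposition~\ref{prop:growth-rate}, which gives essentially the same equivalence but under the earlier reading of $\ZOP{X}[s]$ as $\#(0^{X\restriction (s+1)})$. To distinguish the two notions in the proof, write $n_X(s) := \#(0^{X\restriction(s+1)})$ for the previous quantity and $m_X(s)$ for the current one (i.e.\ the number of zeros of $X$ visible after $s$ steps of evaluation).

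The key observation is that, for any primitive recursive set $X$, there are primitive recursive functions $\alpha_X$ and $\beta_X$ with
\[
m_X(s) = n_X(\beta_X(s)) \qquad \text{and} \qquad n_X(s) \leq m_X(\alpha_X(s))
\]
for every $s$. Here $\beta_X(s)$ is the largest $k$ such that $X(j)\!\downarrow$ in $\leq s$ steps for all $j\leq k$, and $\alpha_X(s)$ is a primitive recursive upper bound on the number of steps required to evaluate $X$ at all inputs $\leq s$. Both exist and are primitive recursive by the standard fact that the running time of a primitive recursive function is primitive recursive (cf.\ Remark~\ref{rem:CTT} and Section~\ref{ssct:listing}).

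From this the two directions are a routine change of variables. For $(\Rightarrow)$, assuming $R_X\leq_{pr} R_Y$, Proposition~\ref{prop:growth-rate} yields a primitive recursive $h$ with $n_X(s)\leq n_Y(h(s))$; setting $p(s) := \alpha_Y(h(\beta_X(s)))$ gives
\[
m_X(s) \;=\; n_X(\beta_X(s)) \;\leq\; n_Y(h(\beta_X(s))) \;\leq\; m_Y(\alpha_Y(h(\beta_X(s)))) \;=\; m_Y(p(s)).
\]
For $(\Leftarrow)$, from $m_X(s)\leq m_Y(p(s))$ set $h(s) := \beta_Y(p(\alpha_X(s)))$ and check
\[
n_X(s) \;\leq\; m_X(\alpha_X(s)) \;\leq\; m_Y(p(\alpha_X(s))) \;=\; n_Y(\beta_Y(p(\alpha_X(s)))) \;=\; n_Y(h(s)),
\]
so Proposition~\ref{prop:growth-rate} delivers $R_X\leq_{pr} R_Y$.

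No real obstacle is expected: the entire argument is a primitive recursive translation between the ``number of positions examined'' and ``number of computation steps taken'' scales, which are freely interchangeable for primitive recursive characteristic functions. The only point that needs a sanity check is the coinfiniteness hypothesis in Proposition~\ref{prop:growth-rate}, but this is automatic here since by Remark~\ref{rem:assumption} all punctual equivalence relations in this paper are assumed non-finite, and hence the underlying sets $X$ and $Y$ are coinfinite.
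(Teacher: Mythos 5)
Your proof is correct and takes essentially the same approach as the paper's: both arguments amount to translating, by primitive recursive conversions, between the position-based count $n_X(s) = \#(0^{X\restriction(s+1)})$ used in Proposition~\ref{prop:growth-rate} and the step-based count $\ZOP{X}[s]$ redefined in this section, and then invoking Proposition~\ref{prop:growth-rate}. The only cosmetic difference is that in the forward direction the paper builds $p$ directly from the reduction $f$ rather than factoring through an $h$ supplied by Proposition~\ref{prop:growth-rate}, while the paper's reverse direction carries out exactly the step-to-position conversion that your $\alpha_X,\beta_X$ bookkeeping makes explicit.
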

\begin{proof}
Suppose that $R_X\leq_{pr}R_Y$ via $f$. For each $s$ we let $p(s)$ be the
least stage $t>s$ such that $Y(n)[t]\downarrow$ for all $n\leq f(s)$. Then
$\ZOP{X}[s]\leq \ZOP{Y}[p(s)]$.

Now conversely fix $p$. For each $m$ we find the first stage $t$ for which we
have $X\upharpoonright (m+1)[t]\downarrow$. Let $h(m)=p(t)$. Then
$h(p_{\overline{X}}(n))\geq p_{\overline{Y}}(n)$ for all $n$ and by
Proposition \ref{prop:growth-rate}, $R_X\leq_{pr} R_Y$.
\end{proof}

It is easy to see that there are $R<_{pr} S$ such that the pair $(R,S)$
satisfies the $\blacklozenge$-property: By Theorem \ref{thm:incomparability}
take a pair of incomparable $Y_0\mid_{pr} Y_1$, then $R=Y_0\wedge Y_1 <_{pr}
S=Y_0\vee Y_1$ has the $\blacklozenge$-property. In fact, by Theorems
\ref{thm:joinreducible} and \ref{thm:meetreducible}, given any $R<_{pr} \Id$
there is some $S>_{pr} R$, and given any $S$ there is some $R<_{pr}S$ such
that $(R,S)$ has the $\blacklozenge$-property. So every punctual degree is
the top and (if it is not $\Id$) the bottom  of an interval with the
$\blacklozenge$-property.

However, since the $\blacklozenge$-property is a property of a $pr$-degree
and not of a set, it is not totally obvious why there should be an interval
that does \emph{not} satisfy the $\blacklozenge$-property. We prove a lemma
which expresses the $\blacklozenge$-property as a property about sets. Note
that the $\blacklozenge$-property does not apriori require the two sets to be
comparable, and the characterization below holds in general.

\begin{lemma}\label{lem:deltapropertysets}
A pair $(X,Y)$ satisfies the $\blacklozenge$-property if and only if there
exist primitive recursive functions $p$ and $q$ such that
$\ZOP{X}[s]=\ZOP{Y}[p(s)]=\ZOP{Y}[t]=\ZOP{X}[q(t)]$ for infinitely many
$s,t$.
\end{lemma}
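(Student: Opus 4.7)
The plan is to prove each direction by an explicit primitive recursive construction that exploits Lemma~\ref{lem:reductionintermsofcharfunction}, which characterizes pr-reducibility via the growth rate of $\ZOP{\cdot}[\cdot]$.

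For the forward direction, I would fix a pair $(X^{*}, Y^{*})$ witnessing the $\blacklozenge$-property for $(X,Y)$. The set of equilibrium points $E := \{t : \ZOP{X^{*}}[t] = \ZOP{Y^{*}}[t]\}$ is primitive recursive and infinite. From $R_{X} \equiv_{pr} R_{X^{*}}$ together with Lemma~\ref{lem:inversionfail} and Remark~\ref{rem:nondecreasing}, one obtains a primitive recursive, order-preserving bijection $\overline{X^{*}} \leftrightarrow \overline{X}$ that allows one to translate, for each $t \in E$ with common equilibrium level $N(t) := \ZOP{X^{*}}[t]$, the position of the $N(t)$-th zero of $X^{*}$ (which is bounded by data available at step $t$) to the position of the $N(t)$-th zero of $X$. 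Doing the same for $Y$, and then primitive recursively computing the least time at which $\ZOP{X}$ (respectively $\ZOP{Y}$) reaches the relevant position, I would define $p$ by sending these $s$-witnesses to their paired $u$-witnesses and extending arbitrarily, e.g.\ $p(s) := 0$ otherwise; $q$ is defined symmetrically. Since $E$ is infinite and the translations are primitive recursive, infinitely many $s$ satisfy $\ZOP{X}[s] = \ZOP{Y}[p(s)]$, and symmetrically for $q$.

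For the backward direction, let $S := \{s : \ZOP{X}[s] = \ZOP{Y}[p(s)]\}$ and $T := \{t : \ZOP{Y}[t] = \ZOP{X}[q(t)]\}$; both are primitive recursive and infinite. The strategy is to build $X^{*}$ and $Y^{*}$ in stages, choosing anchor stages $r_{0} < r_{1} < \cdots$ primitive recursively derived from $S$ and $T$, such that at each anchor $r_{n}$ both $X^{*}$ and $Y^{*}$ have exactly $N_{n} := \ZOP{X}[s_{n}] = \ZOP{Y}[p(s_{n})]$ zeros. Between anchors, one pads with $1$s so that the zero-counts of $X^{*}$ (respectively $Y^{*}$) track those of $X$ (respectively $Y$) up to a primitive recursive time dilation; this yields $R_{X^{*}} \equiv_{pr} R_{X}$ and $R_{Y^{*}} \equiv_{pr} R_{Y}$ via Lemma~\ref{lem:reductionintermsofcharfunction}, while producing an equilibrium point at each $r_{n}$ by construction.

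The main obstacle will be the bookkeeping in the backward direction: we must simultaneously control the zero-growth of $X^{*}$ and $Y^{*}$ so as to remain pr-comparable with $X$ and $Y$, and also ensure that the anchor alignments yield genuine equilibrium points in the new $\ZOP$ sense, which takes into account computation time rather than mere initial-segment length. Since the anchor positions and padding lengths are determined by primitive recursive data, namely the pr enumerations of $S$ and $T$, the functions $\ZOP{X}[\cdot], \ZOP{Y}[\cdot]$, and $p,q$, the construction can be carried out uniformly in stages, and the verification reduces to direct invocations of Lemma~\ref{lem:reductionintermsofcharfunction}.
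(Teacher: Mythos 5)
Your forward direction is essentially the paper's argument in different clothing: composing the pr-reductions between $X,X^{*}$ and $Y,Y^{*}$ to obtain, from each equilibrium point of $(X^{*},Y^{*})$, a primitive recursive bound on where the matching zero-counts of $X$ and $Y$ appear. One wrinkle: defining $p$ only on the ``$s$-witnesses'' and setting $p(s)=0$ elsewhere requires you to \emph{recognize} the witnesses primitive recursively, and the witness stage derived from an equilibrium point $t$ need not be $\geq t$, so the search for the generating $t$ is not obviously bounded. This is avoidable (and is what the paper does) by defining $p(s)$ for \emph{every} $s$ as a bounded search for the least $t\leq f_{Y^{*},Y}(f_{X,X^{*}}(s+1))$ with $\ZOP{X}[s]=\ZOP{Y}[t]$.

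The backward direction, however, has a genuine gap, and it sits exactly where you wrote ``up to a primitive recursive time dilation.'' Your plan is to align $X^{*}$ and $Y^{*}$ at anchor stages where both have $N_{n}=\ZOP{X}[s_{n}]=\ZOP{Y}[p(s_{n})]$ zeros. When $p(s_{n})\geq s_{n}$, the natural anchor is the stage $w=p(s_{n})$, at which you must hold $\ZOP{X^{*}}[w]$ down to $N_{n}$; but to decide $X^{*}(w)$ at stage $w$ you must recognize that $w$ is of the form $p(s_{n})$ for some $s_{n}\in S$, i.e.\ compute $p^{-1}$, which is in general not primitive recursive. (Relatedly, the increasing enumeration of the infinite primitive recursive set $S$ need not be primitive recursive, so ``anchor stages primitive recursively derived from $S$'' is not available either.) This inversion problem is the entire content of this direction, and the paper's proof is built around circumventing it: it introduces the primitive recursive, nondecreasing function $c(w)=$ the largest $u\leq w$ with $\ZOP{X}[u]\leq\ZOP{Y}[w]$ or $p(u)<w$, uses it to define a slowed-down zero-counting function $d(w)=\min\{d(w-1)+1,\ \ZOP{X}[c(w)]\}$ for $X^{*}$, and splits into cases according to whether infinitely many good pairs $(s,t)$ satisfy $p(s)\geq s$, or $q(t)\geq t$, or neither --- in the last case no new sets are needed because $\ZOP{X}[p(s)]=\ZOP{Y}[p(s)]$ (or the symmetric identity at $q(t)$) already holds, so $X^{*}=X$, $Y^{*}=Y$ works. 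Note also that your decoupled sets $S$ and $T$ discard the linkage $\ZOP{X}[s]=\ZOP{Y}[t]$ between the two halves of the hypothesis, which the paper's case analysis uses. Without some substitute for the $c$/$d$ device and the case split, the construction you outline cannot be carried out primitive recursively.
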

\begin{proof}
Suppose that $(X,Y)$ satisfies the $\blacklozenge$-property. Fix $(X^{*},
Y^{*})$ witnesseing that the pair $(X,Y)$ has the $\blacklozenge$-property,
so that
\[
(\forall s)(\exists t\geq s)\left[\ZeXs[t]=\ZeYs[t]\right],
\] and functions $f_{X,X^*},f_{X^*,X},f_{Y,Y^*}$ and $f_{Y^*,Y}$ satisfying
\begin{gather*}
	\ZOP{X}[s]\leq\ZOP{X^*}[f_{X,X^*}(s)],\ \ \ZOP{X^*}[s]\leq\ZOP{X}[f_{X^*,X}(s)],\\
	\ZOP{Y}[s]\leq\ZOP{Y^*}[f_{Y,Y^*}(s)], \ \ \ZOP{Y^*}[s]\leq\ZOP{Y}[f_{Y^*,Y}(s)]
\end{gather*}
for every $s$, respectively (applying Lemma
\ref{lem:reductionintermsofcharfunction}). Then obviously we should take $p$
and $q$ to be the composition of the given functions in the correct order.
More specifically, we let $p(s)=$ the least stage $t\leq
f_{Y^*,Y}(f_{X,X^*}(s+1))$ such that $\ZOP{X}[s]=\ZOP{Y}[t]$, and if $t$
cannot be found then let $p(s)=s$. Similarly, let $q(t)=$ the least stage
$u\leq f_{X^*,X}(f_{Y,Y^*}(t+1))$ such that $\ZOP{Y}[t]=\ZOP{X}[u]$, and if
$u$ cannot be found then let $q(t)=t$.

We first check that $p$ works. Let $w$ and $j$ be such that
\[
\ZeXs[w]=\ZeYs[w]=j.
\]
Let $s$ be the greatest stage such that $\ZOP{X}[s]=j$. Then as $\ZeXs[w]=j$
and $\ZOP{X}[s+1]=j+1$, we certainly have $w<f_{X,X^*}(s+1)$. Then
\[
f_{Y^*,Y}(f_{X,X^*}(s+1))\geq f_{Y^*,Y}(w)
\]
and also
\[
\ZOP{Y}[f_{Y^*,Y}(w)]\geq\ZOP{Y^*}[w]=j.
\]
Therefore, the bound $f_{Y^*,Y}(f_{X,X^*}(s+1))$ is large enough, and we have

\[
\ZOP{X}[s]=\ZOP{Y}[p(s)]=j.
\]
A similar argument holds for $q$.

Now conversely, assume that $p$ and $q$ exist. It is easy to see that we can
make $p$ and $q$ nondecreasing. We wish to show that $(X,Y)$ satisfies the
$\blacklozenge$-property. An obvious candidate for $X^*$ is a set satisfying
$\ZOP{X^*}[p(s)]=\ZOP{X}[s]$ for every $s$, and then we can take $Y^*=Y$, so
that $\ZeXs[p(s)]=\ZeYs[p(s)]$ holds for infinitely many $s$. Unfortunately,
in order to do this, we will need to compute $p^{-1}$ which in general is not
primitive recursive. So we will have to use both $p$ and $q$ to define $X^*$
and $Y^*$.

We call $(s,t)$ a \emph{good pair} if
\[
\ZOP{X}[s]=\ZOP{Y}[p(s)]=\ZOP{Y}[t]=\ZOP{X}[q(t)];
\]
by the hypothesis there are infinitely many good pairs.

First, suppose that there are infinitely many good pairs $(s,t)$ such that
$p(s)\geq s$. Define $c(w)$ to be the largest value of $u\leq w$ such that
$\ZOP{X}[u]\leq\ZOP{Y}[w]$ or $p(u)<w$. Notice that the function $c$ is
primitive recursive and non-decreasing. Therefore, so is the function
$d(w)=\min\left\{d(w-1)+1,\ZOP{X}[c(w)]\right\}$. Furthermore, $d$ has the
property that for any $w$, $d(w+1)\leq d(w)+1$, and that for any $w$ there is
some $t$ satisfying $w\leq t\leq 2w$ such that $d(t)=\ZOP{X}[c(w)]$. (Recall
our convention that for any set $Z$ and any stage $t$,
$\ZOP{Z}[t+1]\leq\ZOP{Z}[t]+1$). Therefore we can define the primitive
recursive set $X^*$ satisfying $\ZOP{X^*}[w]=d(w)$ for all $w$. Take $Y^*=Y$.

Let $\hat{d}(w)$ be the largest value $\leq w$ such that
$\ZOP{X}[\hat{d}(w)]=d(w)$, which is also primitive recursive. Therefore, by
Lemma \ref{lem:reductionintermsofcharfunction} we obviously have
$R_{X^*}\leq_{pr} R_X$. Now we observe that for each $s$, $s\leq c(w)$ where
$w=\max\{s,p(s)+1\}$. Therefore
\[
\ZOP{X^*}[2w]=d(2w)\geq\ZOP{X}[c(w)]\geq\ZOP{X}[s],
\]
showing that $R_{X^*}\geq_{pr} R_X$. Now it follows by a straightforward
induction on $w$ that the following claim is true:
\[
d(w)\geq\max\{\ZOP{X}[u]\mid \ZOP{X}[u]\leq\ZOP{Y}[w]
\]
for some $u\leq w\}$ (using the fact that
\[
\ZOP{X}[c(w)]\geq\max\{\ZOP{X}[u]\mid \ZOP{X}[u]\leq\ZOP{Y}[w]
\]
for some $u\leq w\}$). Now take $(s,t)$ to be a good pair with $p(s)\geq s$.
By the claim above, we have $\ZOP{X^*}[p(s)]\geq \ZOP{X}[s]$. If they were
not equal then $d(p(s))$ would have to be larger than $\ZOP{X}[s]$ which
means that
\[
\ZOP{X}[s]<d(p(s))\leq \ZOP{X}[c(p(s))].
\]
But then as $c(p(s))\geq s$ and $p$ is nondecreasing, we have $p(c(p(s)))\geq
p(s)$, which means, by the definition of $c(p(s))$, that
\[
\ZOP{X}[c(p(s))]\leq\ZOP{Y}[p(s)]=\ZOP{X}[s],
\]
a contradiction. Thus we conclude that
\[
\ZOP{X^*}[p(s)]=\ZOP{X}[s]=\ZOP{Y}[p(s)]=\ZOP{Y^*}[p(s)].
\]

If there are infinitely many good pairs $(s,t)$ such that $q(t)\geq t$ then
we repeat the above, now taking $X^*=X$ and $Y^*$ defined analogously, using
$q$ in place of $p$. So we assume that there are infinitely many good pairs
$(s,t)$ such that $p(s)<s$ and $q(t)<t$. We claim that we can take $X=X^*$
and $Y=Y^*$. Fix a good pair $(s,t)$ such that $p(s)<s$, $q(t)<t$ and

\[
\ZOP{X}[s]=\ZOP{Y}[p(s)]=\ZOP{Y}[t]=\ZOP{X}[q(t)]=j.
\]
Suppose that
\[
\min\{w\mid \ZOP{X}[w]=j\}\leq\min\{w\mid \ZOP{Y}[w]=j\}.
\]
Now this means that
\[
p(s)\geq\min\{w\mid \ZOP{X}[w]=j\}
\]
and since $p(s)<s$ we have that
\[\ZOP{X}[p(s)]=j=\ZOP{Y}[p(s)].
\]
On the other hand if
\[
\min\{w\mid \ZOP{X}[w]=j\}\geq\min\{w\mid \ZOP{Y}[w]=j\}
\]
then
\[
\ZOP{Y}[q(t)]=j=\ZOP{X}[q(t)].
\qedhere
\]
\end{proof}

\begin{corollary}\label{cor:deltapropertycor}
Let $R_X\leq_{pr} R_Y$. Then $(X,Y)$ satisfies the $\blacklozenge$-property
if and only if there exists a primitive recursive function $q$ such that
$\ZOP{Y}[t]=\ZOP{X}[q(t)]$ for infinitely many $t$. Furthermore, we can take
$q$ to be nondecreasing, and we may also replace
``$\ZOP{Y}[t]=\ZOP{X}[q(t)]$" with ``$\ZOP{Y}[t]\leq \ZOP{X}[q(t)]$".
\end{corollary}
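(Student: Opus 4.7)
The forward implication is immediate from Lemma~\ref{lem:deltapropertysets}: the lemma supplies primitive recursive $p,q$ satisfying $\ZOP{X}[s]=\ZOP{Y}[p(s)]=\ZOP{Y}[t]=\ZOP{X}[q(t)]$ for infinitely many pairs $(s,t)$, so in particular $\ZOP{Y}[t]=\ZOP{X}[q(t)]$ (and a fortiori the ``$\leq$'' variant) holds for infinitely many $t$.

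For the backward implication with equality, suppose $R_X\leq_{pr}R_Y$ and a primitive recursive $q$ is given with $\ZOP{Y}[t]=\ZOP{X}[q(t)]$ for $t$ in an infinite set $T$. By Lemma~\ref{lem:reductionintermsofcharfunction}, fix a primitive recursive $r$ such that $\ZOP{X}[s]\leq\ZOP{Y}[r(s)]$ for every $s$, and define the primitive recursive function
\[
  p(s):=\mu u\leq r(s)\,\bigl[\ZOP{Y}[u]\geq\ZOP{X}[s]\bigr],
\]
which is well-defined since $u=r(s)$ always witnesses the inner condition. My plan is to verify the hypothesis of Lemma~\ref{lem:deltapropertysets} by showing that $(q(t),t)$ is a good pair for every $t\in T$. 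Setting $s:=q(t)$ and $j:=\ZOP{X}[s]=\ZOP{Y}[t]$, I need $\ZOP{Y}[p(s)]=j$. By construction $\ZOP{Y}[p(s)]\geq j$; suppose for contradiction it were strict, i.e.\ $\ZOP{Y}[p(s)]>j$. Then either $p(s)=0$ and $\ZOP{Y}[u]\geq\ZOP{Y}[0]>j$ for all $u$, or $p(s)>0$ and the minimality of $p(s)$ yields $\ZOP{Y}[p(s)-1]<j$, so by nondecreasingness $\ZOP{Y}[u]\neq j$ for every $u$. Either way $\ZOP{Y}[t]\neq j$, contradicting the choice of $t$. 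Hence $\ZOP{Y}[p(s)]=j$, and $(s,t)$ is good. Since the values $j$ are unbounded as $t$ varies over $T$, these pairs are infinite in number, and Lemma~\ref{lem:deltapropertysets} yields $\blacklozenge(X,Y)$.

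To derive the ``$\leq$'' variant from the equality variant, I reduce to the equality case by passing to primitive-recursively equivalent sets $X^\sharp,Y^\sharp$ computed by a slowed-down primitive recursive evaluation revealing at most one new position per step, so that $\ZOP{X^\sharp}$ and $\ZOP{Y^\sharp}$ increment by at most one per stage and hence attain every natural value in their range. Composing the given $q$ with the conversion functions supplied by Lemma~\ref{lem:reductionintermsofcharfunction} for $R_X\equiv_{pr}R_{X^\sharp}$ and $R_Y\equiv_{pr}R_{Y^\sharp}$ produces a primitive recursive $\leq$-witness for $(X^\sharp,Y^\sharp)$; the non-skipping property then upgrades this to an equality witness via bounded search of the form $t\mapsto\mu s\,[\ZOP{X^\sharp}[s]\geq\ZOP{Y^\sharp}[t]]$, and the equality argument above applied to $(X^\sharp,Y^\sharp)$ delivers $\blacklozenge(X^\sharp,Y^\sharp)=\blacklozenge(X,Y)$ by definition of the $\blacklozenge$-property. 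For the nondecreasing claim, replacing a $\leq$-witness $q$ by $t\mapsto\max_{t'\leq t}q(t')$ trivially preserves the inequality (since $\ZOP{X}$ is nondecreasing); a nondecreasing equality witness is then recovered by re-running the ``$\leq$ implies $\blacklozenge$ implies $=$'' chain.

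The one technical delicacy is the possible skipping of intermediate values by $\ZOP{X}$ or $\ZOP{Y}$, which obstructs a direct conversion of the $\leq$-hypothesis into an equality hypothesis. I expect this to be the main obstacle, addressed as above through the slowdown to non-skipping $pr$-equivalents together with a routine composition of canonical conversion functions; the remaining steps amount to bounded search combined with nondecreasingness, and an appeal to Lemma~\ref{lem:deltapropertysets}.
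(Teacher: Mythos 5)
Your core argument matches the paper's own (very terse) proof: fix $g$ with $\ZOP{X}[s]\leq\ZOP{Y}[g(s)]$ from Lemma~\ref{lem:reductionintermsofcharfunction}, let $p(s)$ be the least $u$ (necessarily $\leq g(s)$, hence a primitive recursive bounded search) with $\ZOP{Y}[u]=\ZOP{X}[s]$, and verify that $(q(t),t)$ is a good pair of Lemma~\ref{lem:deltapropertysets} for each of the infinitely many $t$ with $\ZOP{Y}[t]=\ZOP{X}[q(t)]$. Your verification of why $\ZOP{Y}[p(s)]$ cannot overshoot $j$ is correct and is in fact slightly more careful than what the paper writes down, since the paper leaves everything after the definition of $p$ implicit.

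Where you diverge is in the treatment of the ``$\leq$'' variant. You introduce slowed-down $pr$-equivalent sets $X^{\sharp},Y^{\sharp}$ to ensure that $\ZOP{}$ does not skip values. This is unnecessary: the paper's remark redefining $\ZOP{X}[s]$ for this section already builds in the convention that $\ZOP{Z}[t+1]\leq\ZOP{Z}[t]+1$ for every $Z$ and $t$ (the paper explicitly recalls this mid-proof of Lemma~\ref{lem:deltapropertysets}), so the sets you are handed are already non-skipping and you may define $q'(t)=\mu s\leq q(t)\,[\ZOP{X}[s]\geq\ZOP{Y}[t]]$ directly, with a harmless fallback when the search fails. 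The paper's convention then forces $\ZOP{X}[q'(t)]=\ZOP{Y}[t]$ at every $t$ with $\ZOP{Y}[t]\leq\ZOP{X}[q(t)]$, yielding the equality witness without the detour through $\equiv_{pr}$-conversion functions. Note also that the $\mu$-search you display for the upgrade has no bound written; as an unbounded search it is not primitive recursive, and you must bound it by (a composition involving) $q(t)$, which you gesture at but do not state.

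For the nondecreasing clause, your remark that $t\mapsto\max_{t'\leq t}q(t')$ gives a nondecreasing $\leq$-witness is fine, but ``re-running the $\leq\Rightarrow\blacklozenge\Rightarrow{=}$ chain'' does not by itself yield a \emph{nondecreasing} equality witness, since the forward direction of the corollary only hands back some equality witness. The gap is easily filled: one checks that the bounded search $q'$ above is itself nondecreasing when $q$ is (because $\ZOP{X}$ and $\ZOP{Y}$ are nondecreasing, the search bound $q(t)$ is nondecreasing, and a failed search at $t_1$ forces $\ZOP{X}[s]<\ZOP{Y}[t_1]\leq\ZOP{Y}[t_2]$ for all $s\leq q(t_1)$, so $q'(t_2)\geq q(t_1)\geq q'(t_1)$). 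Still, this observation should be made explicit rather than waved at.
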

\begin{proof}
If $R_X\leq_{pr}R_Y$ then we fix by Lemma
\ref{lem:reductionintermsofcharfunction}, a function $g$ such that
$\ZOP{X}[s]\leq\ZOP{Y}[g(s)]$ for every $s$. But $p(s)\leq g(s)$ for every
$s$, where $p(s)$ is the least such that $\ZOP{X}[s]=\ZOP{Y}[p(s)]$.
\end{proof}

\begin{corollary}\label{cor:deltapropertysubinterval}
If $R_X\leq_{pr}R_Y$ and $(X,Y)$ has the $\blacklozenge$-property then every
subinterval of $(X,Y)$ also has the $\blacklozenge$-property.
\end{corollary}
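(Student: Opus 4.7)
The plan is to reduce the statement, via Corollary~\ref{cor:deltapropertycor}, to producing a single primitive recursive function $q'$ such that $\ZOP{V}[s] \leq \ZOP{U}[q'(s)]$ holds for infinitely many $s$ (here $R_X \leq_{pr} R_U \leq_{pr} R_V \leq_{pr} R_Y$ is the given subinterval). The idea is to build $q'$ by composing, in the correct order, the primitive recursive witnesses for the two ``outer'' reductions $R_X \leq_{pr} R_U$ and $R_V \leq_{pr} R_Y$ with the backward witness $q$ coming from the $\blacklozenge$-property of $(X,Y)$.

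More precisely, I will first fix nondecreasing primitive recursive functions $a$ and $c$ with $\ZOP{X}[s] \leq \ZOP{U}[a(s)]$ and $\ZOP{V}[s] \leq \ZOP{Y}[c(s)]$ for all $s$ (via Lemma~\ref{lem:reductionintermsofcharfunction}), and a nondecreasing primitive recursive $q$ with $\ZOP{Y}[t] \leq \ZOP{X}[q(t)]$ for $t$ in an infinite set $T$ (via Corollary~\ref{cor:deltapropertycor} applied to $(X,Y)$). Then I will set
\[
q'(s) := a(q(c(s+1))).
\]
Since $\ZOP{V}$ is unbounded, $c$ must be unbounded too; hence for each $t \in T$ the value $s(t) := \max\{s : c(s) \leq t\}$ is well-defined, satisfies $c(s(t)+1) > t$ by maximality, and $s(t) \to \infty$ as $t \to \infty$. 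The verification then amounts to the chain
\[
\ZOP{V}[s(t)] \leq \ZOP{Y}[c(s(t))] \leq \ZOP{Y}[t] \leq \ZOP{X}[q(t)] \leq \ZOP{U}[a(q(t))] \leq \ZOP{U}[q'(s(t))],
\]
where the last inequality uses $c(s(t)+1) > t$ together with monotonicity of $q$ and $a$. Since $\{s(t): t \in T\}$ is infinite, $q'$ is the desired witness.

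The main obstacle is a tempting-but-doomed variant: one might hope for infinitely many $s$ with $c(s) \in T$, so that the chain of inequalities closes directly on $s$. This need not happen, because $T$ can have primitive-recursively-unboundable gaps, so the range of $c$ may miss $T$ from some point onwards. The workaround above sidesteps this by indexing the ``good'' $s$-values by elements of $T$ rather than trying to hit $T$ through $c$: we feed the slightly larger value $c(s+1)$ into $q$, and monotonicity rescues the bound. Everything else---standardizing $a, c, q$ to be nondecreasing, primitive recursiveness of the counting functions, and the infinitude of $\{s(t) : t \in T\}$---is routine given the lemmas already established in the section.
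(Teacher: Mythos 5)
Your proposal is correct and follows essentially the same route as the paper: both reduce the claim to Corollary~\ref{cor:deltapropertycor} and then compose the primitive recursive witnesses of the two outer reductions with the $\blacklozenge$-witness for $(X,Y)$, handling the indexing mismatch by picking, for each good stage on the $(X,Y)$ side, a largest suitable stage on the subinterval side. The only (cosmetic) difference is that the paper insists on equalities, normalizing $(X,Y)$ to have genuine equilibrium points and using ``least stage such that the counts agree'' functions, whereas you invoke the ``$\leq$'' variant of Corollary~\ref{cor:deltapropertycor} and chase inequalities through nondecreasing witnesses, which slightly streamlines the bookkeeping.
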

\begin{proof}
Suppose $R_X\leq_{pr}R_{X'}\leq_{pr}R_{Y'}\leq_{pr} R_Y$. Restricting the
diamond $R_C,R_D$ to the subinterval $(X',Y')$ does not automatically do it,
since for instance, $R_C\vee R_{X'}$ could be above $ R_{Y'}$.

We may assume that $\ZOP{X}[t]=\ZOP{Y}[t]$ for infinitely many $t$. We fix
functions $f$ and $g$ such that for every $t$, $f(t)$ and $g(t)$ are the
least such that $\ZOP{Y'}[t]=\ZOP{Y}[g(t)]$ and $\ZOP{X}[t]=\ZOP{X'}[f(t)]$.
Now given any $t$ let $q(t)=f(u)$ where $u$ is the largest such that
$u<g(t+1)$ and $\ZOP{X}[u]=\ZOP{Y}[u]$. If $u$ cannot be found, let $q(t)=t$.
By Corollary \ref{cor:deltapropertycor} it remains to check that
$\ZOP{Y'}[w]=\ZOP{X'}[q(w)]$ for infinitely many $w$. Suppose that
$\ZOP{X}[t]=\ZOP{Y}[t]=j$ for some $t,j$. Let $w$ be the largest such that
$\ZOP{Y'}[w]=j$. Then
\[
\ZOP{Y}[g(w+1)]=\ZOP{Y'}[w+1]=j+1
\]
and therefore $t<g(w+1)$. By the minimality of $g(w+1)$, we have

\[
\ZOP{X}[u]=\ZOP{Y}[u]=j
\]
for the chosen $u$, and therefore
\[
\ZOP{X'}[q(w)]= \ZOP{X'}[f(u)]=\ZOP{X}[u]=j=\ZOP{Y'}[w].
\qedhere
\]
\end{proof}

Lemma \ref{lem:deltapropertysets} characterizes the $\blacklozenge$-property
in terms of the relative growth rates of $\ZOP{X}$ and $\ZOP{Y}$. For our
next purpose it shall be convenient to express the $\blacklozenge$-property
in terms of the relative growth rates of $p_{\overline{X}}$ and
$p_{\overline{Y}}$. The term ``$n+1$" in the next lemma is important; by
Remark \ref{rem:deltapropertyx1} we cannot replace $p_{\overline{Y}}(n+1)$
with $p_{\overline{Y}}(n)$.

\begin{lemma}\label{lem:deltapropertysetsprincipalfunction}
Let $R_X\leq_{pr} R_Y$. Then $(X,Y)$ satisfies the $\blacklozenge$-property
if and only if there exists a primitive recursive function $r$ such that
$p_{\overline{X}}(n)\leq r(p_{\overline{Y}}(n+1))$ for infinitely many $n$.
\end{lemma}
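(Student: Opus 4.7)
The plan is to use Corollary~\ref{cor:deltapropertycor} as the bridge: since $R_X \leq_{pr} R_Y$, the pair $(X,Y)$ has the $\blacklozenge$-property iff there is a nondecreasing primitive recursive $q$ with $\ZOP{Y}[t] \leq \ZOP{X}[q(t)]$ for infinitely many $t$, and I will translate between ``seeing zeros by time $t$'' and ``position of the $n$-th zero''. Throughout, let $k^Z_s$ denote the largest $k$ such that $Z(j)\!\downarrow$ in $\leq s$ steps for every $j \leq k$, so that $\ZOP{Z}[s]$ counts the zeros of $Z\restriction (k^Z_s+1)$; for primitive recursive $Z$ the map $s \mapsto k^Z_s$ is primitive recursive. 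For primitive recursive $Y$ I also fix a primitive recursive $T_Y$ with $k^Y_{T_Y(m)} \geq m$ for every $m$ (obtained as the running maximum of the evaluation times for $Y(j)$, $j\leq m$). Without loss of generality $r$ is nondecreasing.

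For the forward direction, fix nondecreasing primitive recursive $q$ as above, and consider any $t$ at which $\ZOP{Y}[t] \leq \ZOP{X}[q(t)]$; set $n+1 := \ZOP{Y}[t]$. Then $\ZOP{X}[q(t)] \geq n+1$ gives $p_{\overline{X}}(n) \leq k^X_{q(t)}$. To bound $q(t)$ in terms of $p_{\overline{Y}}(n+1)$, observe that by stage $T_Y(p_{\overline{Y}}(n+1))$ the $(n+2)$-th zero of $Y$ has already appeared, so $\ZOP{Y}[T_Y(p_{\overline{Y}}(n+1))] \geq n+2 > \ZOP{Y}[t]$, forcing $t \leq T_Y(p_{\overline{Y}}(n+1))$. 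Monotonicity of $q$ and of $s \mapsto k^X_s$ then yields $p_{\overline{X}}(n) \leq k^X_{q(T_Y(p_{\overline{Y}}(n+1)))}$, so setting $r(m) := k^X_{q(T_Y(m))}$ does the job. Since $\ZOP{Y}[t] \to \infty$, the infinitely many valid $t$'s produce infinitely many valid $n$'s.

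For the backward direction, suppose $p_{\overline{X}}(n) \leq r(p_{\overline{Y}}(n+1))$ holds for $n$ in an infinite set $I$. Given $t$, I define $q(t)$ by: compute $M := r(k^Y_{t+1})$, and let $q(t)$ be a primitive recursive stage by which $X\restriction (M+1)$ has been fully evaluated, so that $k^X_{q(t)} \geq M$. For each $n \in I$ let $t^*_n$ be the largest $t$ with $\ZOP{Y}[t]=n+1$ (it exists since $Y$ is coinfinite, so $\ZOP{Y}$ eventually exceeds $n+1$). At stage $t^*_n + 1$ the $(n+2)$-th zero of $Y$ becomes visible, hence $k^Y_{t^*_n+1} \geq p_{\overline{Y}}(n+1)$. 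Therefore $k^X_{q(t^*_n)} \geq r(k^Y_{t^*_n+1}) \geq r(p_{\overline{Y}}(n+1)) \geq p_{\overline{X}}(n)$, which gives $\ZOP{X}[q(t^*_n)] \geq n+1 = \ZOP{Y}[t^*_n]$. Distinct $n \in I$ produce distinct $t^*_n$'s, furnishing the infinitely many witnesses required by Corollary~\ref{cor:deltapropertycor}.

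The one point that must be negotiated carefully is the $n+1$ versus $n$ shift in the statement (also flagged in Remark~\ref{rem:deltapropertyx1}): in the backward direction I need a primitive recursive upper bound on $p_{\overline{Y}}(n+1)$ obtained from $t^*_n$, and the inequality $k^Y_{t^*_n+1} \geq p_{\overline{Y}}(n+1)$ supplies exactly this because $t^*_n+1$ is precisely when the $(n+2)$-th zero of $Y$ is first revealed. Trying to work with $p_{\overline{Y}}(n)$ instead would require effectively locating the $(n+1)$-th zero from $\ZOP{Y}[t^*_n]=n+1$, information not recoverable by a primitive recursive function of $t^*_n$ in general. Apart from this bookkeeping, the argument is a careful translation between the rate-of-growth formulation of Corollary~\ref{cor:deltapropertycor} and the principal-function formulation of the present lemma.
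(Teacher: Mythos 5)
Your proof is correct and follows essentially the same route as the paper's: both directions pass through Corollary~\ref{cor:deltapropertycor} and translate between the stage-counting formulation and the principal-function formulation by composing $q$ (resp.\ $r$) with primitive recursive ``time to evaluate the set up to position $m$'' functions, with the largest stage $t$ at which $\ZOP{Y}[t]$ takes a given value playing the same pivotal role in both arguments. The only (cosmetic) differences are that you insert the bookkeeping functions $k^Z_s$ and $T_Z$ explicitly where the paper leaves them implicit, and in the backward direction you settle for the inequality variant $\ZOP{Y}[t]\leq\ZOP{X}[q(t)]$ licensed by the corollary rather than engineering exact equality as the paper does.
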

\begin{proof}
Suppose that $(X,Y)$ has the $\blacklozenge$-property. Fix $q$ as in
Corollary \ref{cor:deltapropertycor}. Let $r(m)=q(u)$, where $u$ is the least
stage such that $Y(i)[u]\downarrow $ for all $i\leq m$. Now let $t$ and $n$
be such that $\ZOP{Y}[t]=\ZOP{X}[q(t)]=n$. Notice that
$p_{\overline{X}}(n)<q(t)$. Let $m=p_{\overline{Y}}(n+1)$ and $u$ be such
that $r(m)=q(u)$. Then since $\ZOP{Y}[t]=n$, we have $t<u$, which means that
$r(m)=q(u)\geq q(t)> p_{\overline{X}}(n)$.

Now suppose that $r$ exists; obviously we may assume that $r$ is
nondecreasing. Define $q(t)$ to be the largest stage $u\leq v$ such that
$\ZOP{X}[u]=\ZOP{Y}[t]$, where $v$ is the least stage such that
$X(j)[v]\downarrow $ for all $j\leq r(t+1)$; if this does not exist, let
$q(t)=t$. Now let $n$ be such that $p_{\overline{X}}(n)\leq
r(p_{\overline{Y}}(n+1))$, and let $t$ be the largest such that
$\ZOP{Y}[t]=n$. We check that $\ZOP{Y}[t]=\ZOP{X}[q(t)]$. We have
$\ZOP{Y}[t+1]=n+1$ and thus $p_{\overline{Y}}(n+1)<t+1$ which means that
\[
r(t+1)\geq  r(p_{\overline{Y}}(n+1))\geq p_{\overline{X}}(n).
\]
This means that $q(t)$ will be equal to some largest $u$ such that
\[
\ZOP{X}[u]=\ZOP{Y}[t]=n.
\]
Hence $\ZOP{X}[q(t)]=\ZOP{Y}[t]$.
\end{proof}

Returning to our question as to whether every interval has the
$\blacklozenge$-property, we can now make use of Lemma
\ref{lem:deltapropertysetsprincipalfunction} to construct an interval that
does not have the $\blacklozenge$-property. In fact, we can show that every
punctual degree is the top of an interval that does not satisfy the
$\blacklozenge$-property:

\begin{proposition}\label{prop:nondeltainterval}
Given any $R_Y$ there is some $R_X<_{pr} R_Y$ such that $(X,Y)$ does not have
the $\blacklozenge$-property.
\end{proposition}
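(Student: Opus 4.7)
The plan is to build a primitive recursive set $X$ by stages so that $\overline{X}\subseteq\overline{Y}$, which yields $R_X\leq_{pr}R_Y$ by Proposition~\ref{prop:growth-rate} (with $h(s)=s$), while satisfying for every $e$ the requirement
\[
N_e\colon\ p_{\overline{X}}(n) > p_e(p_{\overline{Y}}(n+1))\ \text{ for all but finitely many } n.
\]
By Lemma~\ref{lem:deltapropertysetsprincipalfunction}, this ensures that $(X,Y)$ does not have the $\blacklozenge$-property; and the inequality $R_X\leq_{pr} R_Y$ is then automatically strict, since $R_X\equiv_{pr}R_Y$ would trivially witness the $\blacklozenge$-property by taking $X^\ast=Y^\ast=Y$.

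The construction maintains two counters, $n$ (the number of zeros placed in $X$ so far) and $e$ (the requirement index currently being targeted), both starting at $0$ and both incremented whenever a new zero is placed in $X$. At stage $s$: if $Y(s)=1$, set $X(s)=1$. If $Y(s)=0$, let $m=\#(0^Y)[s]$; then set $X(s)=0$ (and increment both $n$ and $e$) precisely when $m\geq n+2$ (so that $y:=p_{\overline{Y}}(n+1)$ can be located by bounded search within $\{0,1,\dots,s\}$) and, in addition, for every $i\leq e$ we have $p_i(y)[s]\!\downarrow$ with value $<s$ (checkable in a primitive recursive way, as in Section~\ref{ssct:listing}); otherwise set $X(s)=1$.

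The verification splits into three parts. First, $X$ is primitive recursive: every decision at stage $s$ is bounded by a primitive recursive function of $s$, so Remark~\ref{rem:CTT} applies. Second, for each pair $(n,e)$ the construction reaches, a new zero is eventually placed: the coinfiniteness of $Y$ supplies stages with $Y(s)=0$ and $m\geq n+2$, and the totality of each $p_i$ guarantees that for all sufficiently large such $s$ we have $p_i(y)[s]\!\downarrow$ with value $<s$ for every $i\leq e$; hence both counters grow unboundedly and $X$ is coinfinite. Third, requirement $N_e$ is satisfied: from the $e$-th zero placement onward, the condition on $(ii)$ forces $p_{\overline{X}}(n')>p_e(p_{\overline{Y}}(n'+1))$ for every $n'\geq e$.

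The essential conceptual ingredient is Lemma~\ref{lem:deltapropertysetsprincipalfunction}, which translates the $\blacklozenge$-property (a priori an invariant of pairs of $pr$-degrees rather than of pairs of sets) into a uniform dominance condition on the principal functions $p_{\overline{X}}$ and $p_{\overline{Y}}$. Once this is available, the construction is an injury-free single-requirement queue, and the only point requiring care is to verify that the stage-$s$ bound on the convergence checks for $p_e$ does not prevent zeros from ever being placed. This is resolved by the simple observation that $y$ is fixed once $n$ is fixed, and $p_e$ is total, so any sufficiently large $s$ satisfies the check.
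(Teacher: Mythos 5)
Your proof is correct and takes essentially the same route as the paper: both arguments reduce the statement to Lemma~\ref{lem:deltapropertysetsprincipalfunction} and then arrange $p_{\overline{X}}(n)>\max_{i\le n} p_i\big(p_{\overline{Y}}(n+1)\big)$ for all $n$. The only difference is packaging --- the paper factors the diagonalization through an auxiliary fast-growing total computable function $F$ that a primitive recursive set can majorize, while you inline the same delay directly into a stage-by-stage construction of $X$ (with the extra, harmless feature that $\overline{X}\subseteq\overline{Y}$).
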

\begin{proof}
We first note that given any total computable function $F$ there is a
coinfinite primitive recursive set $X$ such that $p_{\overline{X}}(n)\geq
F(n)$ for every $n$. Now given any $Y$ we let $F$ to be a computable function
that is fast growing enough so that for every primitive recursive function
$r$, $F(n)>r(p_{\overline{Y}}(n+1))$ for almost all $n$. (Notice that
$p_{\overline{Y}}$ is not necessarily primitive recursive). Now we take $X$
such that $p_{\overline{X}}(n)\geq F(n)$ for all $n$. Then $R_X\leq_{pr} R_Y$
by Proposition \ref{prop:growth-rate} and $(X,Y)$ does not have the
$\blacklozenge$-property by Lemma
\ref{lem:deltapropertysetsprincipalfunction}. This of course implies that
$Y\nleq_{pr}X$.
\end{proof}

On the other hand, it is not the case that every punctual degree is the
bottom of an interval that does not have the $\blacklozenge$-property. For
instance, if $(X,T)$ has the $\blacklozenge$-property then by Corollary
\ref{cor:deltapropertysubinterval} there is no $Y$ such that
$R_X\leq_{pr}R_Y$ and $(X,Y)$ does not have the $\blacklozenge$-property.

By Corollary \ref{cor:deltapropertydefinable}, the $\blacklozenge$-property
is definable in the language of partial orders. This property will be crucial
in our subsequent analysis of $\Peq$. The next most natural step when
studying a new degree structure is to verify whether the degree structure is
rigid or homogeneous. We introduce an important definition that shall soon
prove to be very useful:

\begin{defi}Given any primitive recursive set $X$, we define $X^{[-1]}$ to be the set defined by:
\[X^{[-1]}(n)=\begin{cases}
1, &\text{if $n$ is the least such that $X(n)=0$},\\
X(n),&\text{otherwise}.
\end{cases}\]
In particular, $\ZOP{X^{[-1]}}[t]=\max\{0,\ZOP{X}[t]-1\}$ for every stage $t$.
\end{defi}

An immediate consequence of the definition is:

\begin{lemma}\label{lem:delta13}
$R_{X^{[-1]}}<_{pr} R_X$ if and only if $R_X<_{pr} \Id$.
\end{lemma}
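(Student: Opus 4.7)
The plan is to observe that $R_{X^{[-1]}} \leq_{pr} R_X$ always holds (the inclusion map on $\omega$ witnesses $\ZOP{X^{[-1]}}[t] \leq \ZOP{X}[t]$, so Lemma~\ref{lem:reductionintermsofcharfunction} applies with $p(t)=t$). Hence the biconditional reduces, after contraposition, to showing that $R_X \leq_{pr} R_{X^{[-1]}}$ if and only if $\Id \leq_{pr} R_X$.

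For the easy direction, suppose $\Id \leq_{pr} R_X$, witnessed by a primitive recursive $q$ with $s \leq \ZOP{X}[q(s)]$. Since $\ZOP{X}[t] \leq \ZOP{X^{[-1]}}[t]+1$, I get $s-1 \leq \ZOP{X^{[-1]}}[q(s)]$, so $n \leq \ZOP{X^{[-1]}}[q(n+1)]$, i.e.\ $\Id \leq_{pr} R_{X^{[-1]}}$. Combined with $R_X \leq_{pr} \Id$ (from the maximality of the degree of $\Id$) this yields $R_X \leq_{pr} R_{X^{[-1]}}$.

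For the nontrivial direction, assume $R_X \leq_{pr} R_{X^{[-1]}}$ and fix, via Lemma~\ref{lem:reductionintermsofcharfunction}, a primitive recursive $p$ such that $\ZOP{X}[s] \leq \ZOP{X^{[-1]}}[p(s)] = \max\{0,\ZOP{X}[p(s)]-1\}$ for every $s$. Whenever $\ZOP{X}[s] \geq 1$ this forces $\ZOP{X}[p(s)] \geq \ZOP{X}[s] + 1$, so each application of $p$ strictly increases the value of $\ZOP{X}$. Since $\overline{X}$ is infinite there is some least $s_0$ with $\ZOP{X}[s_0] \geq 1$; define $q$ by primitive recursion, $q(0)=s_0$ and $q(n+1)=p(q(n))$. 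A straightforward induction gives $\ZOP{X}[q(n)] \geq n+1$, and $q$ is primitive recursive as the iterate of a primitive recursive function. By Lemma~\ref{lem:reductionintermsofcharfunction} this yields $\Id \leq_{pr} R_X$, as required.

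The only real content in the argument is the iteration step; the rest is just the arithmetic of $\ZOP{X^{[-1]}}[t]=\max\{0,\ZOP{X}[t]-1\}$. The potential obstacle is making sure that the iterate is primitive recursive, but this is precisely what primitive recursion is built to do, and no bookkeeping about indices of $p$ is needed.
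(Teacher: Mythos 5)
Your proof is correct and follows essentially the same route as the paper's: both handle the trivial inequality $R_{X^{[-1]}}\leq_{pr}R_X$ first, reduce the lemma to the equivalence $R_X \leq_{pr} R_{X^{[-1]}} \Leftrightarrow \Id \leq_{pr} R_X$, and settle the nontrivial implication by primitive recursive iteration of a step that strictly increases the number of visible zeros of $X$. The only difference is cosmetic: you run the iteration through the counting characterization of Lemma~\ref{lem:reductionintermsofcharfunction}, whereas the paper iterates the reduction function $f$ directly via $F(k+1)=\max f([0,F(k)])$.
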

\begin{proof}
Since $\ZOP{X^{[-1]}}[t]\leq \ZOP{X}[t]$ for every $t$, we have
$R_{X^{[-1]}}\leq_{pr} R_X$ (by Lemma
\ref{lem:reductionintermsofcharfunction}), so we have to show that
$R_{X^{[-1]}}\geq_{pr} R_X$ if and only if $R_X\geq_{pr} \Id$. Suppose that
$g$ reduces $\Id$ to $R_X$. Let $n$ be the least element not in $X$. If
$n\not\in \textrm{rng}(g)$ then $g$ is already a reduction from $\Id$ to
$R_{X^{[-1]}}$, and if $g(m)=n$ then the function $h(k)=g(k+m+1)$ will reduce
$\Id$ to $R_{X^{[-1]}}$.

Conversely suppose that $f$ reduces $R_X$ to $R_{X^{[-1]}}$. Define the
function $F$ by the following. Let $F(0)=\max f([0,n])$ where $n$ is the
second element not in $X$. Let $F(k+1)=\max f([0,F(k)])$. Then for each $k$,
there are at least $k+1$ many distinct elements not in $X$ which are smaller
than $F(k)$. The function $F$ can easily be used to define a reduction of
$\Id$ to $R_X$.
\end{proof}

Our first question about rigidity is easily answered by Lemma
\ref{lem:delta13}:

\begin{theorem}\label{thm:nonrigidity}
$(\Peq,\leq)$ is not rigid.
\end{theorem}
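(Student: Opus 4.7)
The plan is to produce a non-identity automorphism of $\Peq$. Since $[\Id]$ is order-definable as the greatest element of $\Peq$, it is fixed by every automorphism, so the interesting action is on the degrees strictly below $[\Id]$. My strategy is to exhibit two distinct, $pr$-incomparable punctual degrees $\mathbf{a} = [R_A]$ and $\mathbf{b} = [R_B]$ and an automorphism $\varphi$ with $\varphi(\mathbf{a}) = \mathbf{b}$ and $\varphi(\mathbf{b}) = \mathbf{a}$.

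I would carry out a simultaneous back-and-forth construction: stage by stage, build primitive recursive sets $A$ and $B$ while extending a partial order isomorphism $\varphi_s$ whose union is the desired automorphism $\varphi$. At each step, given a new punctual degree $\mathbf{c}$ to incorporate into the domain or range, I would use density-plus-incomparability (Theorem~\ref{thm:density-plus-incomparability}) together with density (Theorem~\ref{thm:density}) to locate a suitable partner $\mathbf{d}$ positioned correctly with respect to all previously committed degrees. Distributivity (Theorem~\ref{theor:distr}) reduces the bookkeeping to preserving $\leq$, since in a distributive lattice the join and meet operations are determined by the order. When I need to produce a degree strictly below a given non-top degree $\mathbf{e}$, the operation $X \mapsto X^{[-1]}$ (Lemma~\ref{lem:delta13}) provides one uniformly.

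The main obstacle, I expect, is coordinating the evolving construction of $A$ and $B$ with the back-and-forth choices. Every commitment $\varphi_s(\mathbf{c}) = \mathbf{d}$ imposes constraints linking $\mathbf{c}$'s interactions with $\mathbf{a}, \mathbf{b}$ to $\mathbf{d}$'s interactions with $\mathbf{b}, \mathbf{a}$: for instance, the degree $[R_C] \vee \mathbf{a}$ must be mapped by $\varphi$ to $[R_D] \vee \mathbf{b}$, and similarly for meets and for joins/meets with earlier-placed partners. These constraints propagate back into the specification of $A$ and $B$ themselves, and maintaining primitive recursiveness of $A$ and $B$ throughout demands precise control of the growth rates $\ZOP{A}[s]$ and $\ZOP{B}[s]$ via Proposition~\ref{prop:growth-rate}. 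Moreover, the $\blacklozenge$-property (Corollary~\ref{cor:deltapropertydefinable}), being order-definable, is an invariant the automorphism must respect: if $(\mathbf{c}, \mathbf{c}')$ has the $\blacklozenge$-property then so must $(\varphi(\mathbf{c}), \varphi(\mathbf{c}'))$, constraining the pool of admissible partners. The hardest part will be verifying that all these finite-stage constraints remain jointly consistent as the construction progresses, so that the back-and-forth closes up to a total automorphism genuinely swapping $\mathbf{a}$ and $\mathbf{b}$.
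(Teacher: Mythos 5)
There is a genuine gap here: your back-and-forth plan is never carried out, and the step you yourself flag as ``the hardest part'' --- showing that the finite-stage commitments remain jointly consistent and that the partial isomorphism always extends --- is precisely where the argument would founder. The paper's own later results show that $\Peq$ is far from homogeneous: slowness and the $\blacklozenge$-property are automorphism invariants, uppercones are not all isomorphic, and no proper lowercone is isomorphic to $\Peq$. So you cannot start from an arbitrary incomparable pair $\mathbf{a},\mathbf{b}$ and hope to swap them; you would first need to certify that they lie in the same orbit under \emph{all} order-definable invariants, and you give no mechanism for doing so. Density, density-plus-incomparability, and distributivity are nowhere near enough to guarantee extendability of a partial order-isomorphism in this structure, and the additional difficulty of simultaneously constructing $A$ and $B$ as primitive recursive sets while the back-and-forth is still choosing its targets is not addressed at all.

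The irony is that you already hold the key in your hands: the operator $X\mapsto X^{[-1]}$, which you relegate to a bookkeeping role, \emph{is} the non-trivial automorphism. Since $\ZOP{X^{[-1]}}[t]=\max\{0,\ZOP{X}[t]-1\}$, Lemma~\ref{lem:reductionintermsofcharfunction} shows that $R_X\leq_{pr}R_Y$ if and only if $R_{X^{[-1]}}\leq_{pr}R_{Y^{[-1]}}$, so the map $\deg(R_X)\mapsto\deg\bigl(R_{X^{[-1]}}\bigr)$ is well defined on degrees, order-preserving and order-reflecting; it is surjective because every $Y$ arises as $X^{[-1]}$ for a suitable $X$ (prepend one extra zero). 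Hence it is an automorphism of $(\Peq,\leq)$, and by Lemma~\ref{lem:delta13} it moves every degree except $\deg(\Id)$ strictly downward, so it is not the identity. No back-and-forth is needed.
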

\begin{proof}
The map $\deg(R_X)\mapsto \deg\left(R_{X^{[-1]}}\right)$ is a non-trivial
automorphism. In fact, it fixes $\deg(\Id)$ and moves every other degree to a
strictly smaller degree.
\end{proof}

\begin{corollary}\label{cor:finitenotdefinable}
The only definable degree is the greatest degree, $\deg(\Id)$. No finite set
of degrees is definable except for $\{\deg(\Id)\}$.
\end{corollary}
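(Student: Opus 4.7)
The plan is to derive this corollary directly from Theorem~\ref{thm:nonrigidity}, combined with the elementary principle that any set first-order definable in a structure is preserved setwise by every automorphism. The strategy is to show that any putative finite definable set $F \ne \{\deg(\Id)\}$ must contain an infinite descending chain under iteration of the automorphism $\phi$ supplied by that theorem.

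For the easy direction, I would observe that $\{\deg(\Id)\}$ is definable as the greatest element of $\Peq$, whose existence is guaranteed by the Proposition at the start of Section~4. In particular, $\deg(\Id)$ itself is definable.

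For the main direction, suppose toward a contradiction that $F$ is a finite nonempty definable set of degrees with $F \ne \{\deg(\Id)\}$. Then $F$ contains some degree $\mathbf{d} \ne \deg(\Id)$. Consider the automorphism $\phi\colon \deg(R_X) \mapsto \deg(R_{X^{[-1]}})$ of Theorem~\ref{thm:nonrigidity}. Since $F$ is definable in the language of partial orders, $F$ is $\phi$-invariant, and hence also $\phi^n$-invariant for every $n$. By Lemma~\ref{lem:delta13}, $\phi(\mathbf{e}) < \mathbf{e}$ for every $\mathbf{e} \ne \deg(\Id)$; moreover, because automorphisms of a poset with a unique greatest element must fix that element, we have $\phi^{-1}(\deg(\Id)) = \deg(\Id)$, so no iterate $\phi^n(\mathbf{d})$ can ever equal $\deg(\Id)$. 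Consequently the chain
\[
\mathbf{d} > \phi(\mathbf{d}) > \phi^2(\mathbf{d}) > \phi^3(\mathbf{d}) > \cdots
\]
is infinite, strictly descending, and entirely contained in $F$, contradicting the finiteness of $F$. The first assertion of the corollary (that $\deg(\Id)$ is the only definable degree) is the special case $|F| = 1$.

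Since the substantive work has already been carried out in Theorem~\ref{thm:nonrigidity} and Lemma~\ref{lem:delta13}, I do not anticipate any serious obstacle: the argument is essentially a bookkeeping exercise. The one point requiring a moment's care is the verification that the orbit of $\mathbf{d}$ under $\phi$ really is infinite and avoids $\deg(\Id)$, which is immediate from the observation that automorphisms preserve the top element together with the strict-decrease property of $\phi$ on non-top degrees.
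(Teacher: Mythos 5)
Your proof is correct and is essentially the intended one: the paper gives no explicit proof of this corollary, stating it immediately after Theorem~\ref{thm:nonrigidity}, whose proof already observes that the automorphism $\deg(R_X)\mapsto\deg(R_{X^{[-1]}})$ fixes $\deg(\Id)$ and strictly decreases every other degree. Your bookkeeping with the infinite descending orbit (and the check that the orbit never reaches $\deg(\Id)$) is exactly the right way to turn that observation into the non-definability statement.
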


We turn now our attention to the question of how homogeneous the structure
$(\Peq,\leq)$ is. (Un)fortunately, the structure $(\Peq,\leq)$ is neither
rigid nor homogeneous, which indicates that the structure is not as trivial
as might seem at first glance. This justifies further investigations into the
degree structure of $\Peq$.

\begin{defi}
We call a coinfinite primitive recursive set $X$ \emph{slow} if for every
primitive recursive function $r$,
$p_{\overline{X}}(n+1)>r(p_{\overline{X}}(n))$ holds for almost every $n$.
\end{defi}
A slow set generates its zeros slower than any primitive recursive recursive
function can predict (infinitely often). Slow sets obviously exist. An
immediate consequence of Lemma \ref{lem:deltapropertysetsprincipalfunction}
is:

\begin{corollary}\label{cor:slowimpliesnotdelta}
If $X$ is slow then $(X,\Id)$ does not satisfy\footnote{Strictly speaking, we
should write $(X,2\omega)$ instead of $(X,\Id)$ here.}  the
$\blacklozenge$-property.
\end{corollary}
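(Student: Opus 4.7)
The plan is to chase the definitions through Lemma~\ref{lem:deltapropertysetsprincipalfunction} and show that slowness forces $p_{\overline{X}}$ to dominate every primitive recursive function (not just every primitive recursive bound on its own iterates). Throughout, view $\Id$ in normal form as $R_E$ where $E=2\omega$, so that $p_{\overline{E}}(n)=2n+1$ is itself primitive recursive. Since $R_X\leq_{pr}R_{\Id}=R_E$ (the top), Lemma~\ref{lem:deltapropertysetsprincipalfunction} applies: $(X,\Id)$ has the $\blacklozenge$-property iff there is a primitive recursive $r$ such that $p_{\overline{X}}(n)\leq r(p_{\overline{E}}(n+1))=r(2n+3)$ for infinitely many $n$. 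Setting $g(n)=r(2n+3)$, which is primitive recursive, this is equivalent to: \emph{there exists a primitive recursive $g$ with $p_{\overline{X}}(n)\leq g(n)$ for infinitely many $n$.}

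The heart of the argument is therefore the following claim, which I will prove to derive a contradiction:
\begin{quote}\textbf{Claim.} If $X$ is slow, then for every primitive recursive $g$, $p_{\overline{X}}(n)>g(n)$ for almost all $n$.\end{quote}
Given $g$, define the primitive recursive function $g^*(m)=\max\{g(k+1):k\leq m\}$, which is non-decreasing and satisfies $g^*(m)\geq g(m+1)$. Apply slowness to the primitive recursive $r=g^*$: for almost every $n$,
\[
p_{\overline{X}}(n+1)\,>\,g^*\!\bigl(p_{\overline{X}}(n)\bigr).
\]
Since $p_{\overline{X}}$ is strictly increasing we have $p_{\overline{X}}(n)\geq n$, and because $g^*$ is non-decreasing this gives
\[
g^*\!\bigl(p_{\overline{X}}(n)\bigr)\,\geq\,g^*(n)\,\geq\,g(n+1).
\]
Combining, $p_{\overline{X}}(n+1)>g(n+1)$ for almost all $n$, which is the claim after reindexing.

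The claim directly contradicts the reformulated $\blacklozenge$-condition: no primitive recursive $g$ can satisfy $p_{\overline{X}}(n)\leq g(n)$ even once cofinitely often, let alone infinitely often. Hence $(X,\Id)$ fails the $\blacklozenge$-property.

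The main conceptual obstacle is the claim itself: slowness as defined is a statement about how $p_{\overline{X}}(n+1)$ grows relative to $p_{\overline{X}}(n)$, whereas what is needed is absolute domination $p_{\overline{X}}(n)>g(n)$. The bridge is the trivial observation $p_{\overline{X}}(n)\geq n$, which lets a primitive recursive function of $p_{\overline{X}}(n)$ be bounded below by the same (monotonized) primitive recursive function of $n$; this converts the local growth-rate hypothesis into the global domination needed to contradict the principal-function characterization of $\blacklozenge$.
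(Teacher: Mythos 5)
Your proof is correct and follows the route the paper intends: the paper derives this corollary as an ``immediate consequence'' of Lemma~\ref{lem:deltapropertysetsprincipalfunction} applied with $Y=2\omega$, and your argument just fills in the details, with the key bridge (using $p_{\overline{X}}(n)\geq n$ together with a monotonized $g^*$ to turn the relative growth condition of slowness into absolute domination) being exactly what is needed.
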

Thus, if $X$ is slow and $(Y,\Id)$ satisfies the $\blacklozenge$-property
($Y$ exists, by Theorem \ref{thm:density}), then no automorphism of
$(\Peq,\leq)$ can map $\deg(Y)$ to $\deg(X)$. This fact also means that
uppercones of $\Peq$ are not always isomorphic to each other.

We also note that the converse to Corollary \ref{cor:slowimpliesnotdelta} is
false: Given any $X$ we can easily find some $Y$ such that
$R_{X^{[-1]}}\leq_{pr}R_Y\leq_{pr}R_X$ and $Y$ is not slow; to do this we can
arrange for $p_{\overline{Y}}(n+1)=p_{\overline{Y}}(n)+1$ for infinitely many
$n$. Then for each such $Y$, $(Y,\Id)$ cannot satisfy the
$\blacklozenge$-property, by Corollary \ref{cor:deltapropertysubinterval}.

When we turn to lowercones however, the situation is less obvious. Since
every punctual degree is the top of an interval with the
$\blacklozenge$-property as well as the top of (another) interval without the
$\blacklozenge$-property, it is not clear how we can immediately distinguish
two lowercones from each other using the $\blacklozenge$-property, similarly
to how we separated uppercones. In fact, the lowercone $\{\deg(R_Y)\mid
R_Y\leq_{pr} R_X\}$ is isomorphic to the lower cone $\{\deg(R_Y)\mid
R_Y\leq_{pr} R_{X^{[-1]}}\}$.

Hence it is entirely conceivable that every lowercone $\{\deg(R_Y)\mid
R_Y\leq_{pr} R_X\}$ is isomorphic to $(\Peq,\leq)$. From the point of view of
each degree of $R_Y$ where $R_Y\leq_{pr}R_X$, the set $X$ has no delay, since
the zeros of $X$ are always generated no slower than the zeros of $Y$. Hence
we might expect to always be able to extend any partial embedding of $\Peq$
into $\{\deg(R_Y)\mid R_Y\leq_{pr} R_X\}$. We will show that this is not the
case. The key to our analysis lies (again!) in the operator $X\mapsto
X^{[-1]}$.

\begin{rem}\label{rem:deltapropertyx1}
By Lemma \ref{lem:deltapropertysetsprincipalfunction}, $(X^{[-1]},X)$ will
have the $\blacklozenge$-property for any $X$. Consequently, we cannot
replace ``$p_{\overline{Y}}(n+1)$" in Lemma
\ref{lem:deltapropertysetsprincipalfunction} with ``$p_{\overline{Y}}(n)$";
for instance, if $Y$ is slow and $X=Y^{[-1]}$.
\end{rem}

Even though an interval of the form $(X^{[-1]},X)$ will always satisfy the
$\blacklozenge$-property, the same is not true of an interval of the form
$(X^{[-2]},X)$, where $X^{[-2]}=\left(X^{[-1]}\right)^{[-1]}$.
\begin{lemma}\label{lem:x2deltaproperty}
$(X^{[-2]},X)$ satisfies the $\blacklozenge$-property if and only if $X$ is
not slow.
\end{lemma}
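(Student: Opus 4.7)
The plan is to reduce the statement to Lemma \ref{lem:deltapropertysetsprincipalfunction} via a simple index shift, exploiting the identity $p_{\overline{X^{[-2]}}}(n) = p_{\overline{X}}(n+2)$, which is immediate from the fact that $X^{[-2]}$ is obtained from $X$ by setting the first two zeros to one. Note also that $\ZOP{X^{[-2]}}[t] \leq \ZOP{X}[t]$ for every $t$, so by Lemma \ref{lem:reductionintermsofcharfunction} we have $R_{X^{[-2]}} \leq_{pr} R_X$, so Lemma \ref{lem:deltapropertysetsprincipalfunction} is applicable to the pair $(X^{[-2]}, X)$.

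For the forward direction, I would assume that $(X^{[-2]}, X)$ satisfies the $\blacklozenge$-property. Lemma \ref{lem:deltapropertysetsprincipalfunction} then gives a primitive recursive function $r$ such that
\[
p_{\overline{X^{[-2]}}}(n) \leq r(p_{\overline{X}}(n+1)) \quad \text{for infinitely many } n.
\]
Substituting $p_{\overline{X^{[-2]}}}(n) = p_{\overline{X}}(n+2)$ and re-indexing $m = n+1$, I obtain $p_{\overline{X}}(m+1) \leq r(p_{\overline{X}}(m))$ for infinitely many $m$, which is exactly the negation of slowness of $X$.

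The converse direction runs the same computation backwards: if $X$ is not slow, fix a primitive recursive $r$ witnessing $p_{\overline{X}}(m+1) \leq r(p_{\overline{X}}(m))$ for infinitely many $m$. Setting $n = m - 1$ for each such $m \geq 1$, we get $p_{\overline{X}}(n+2) \leq r(p_{\overline{X}}(n+1))$ for infinitely many $n$, i.e.\ $p_{\overline{X^{[-2]}}}(n) \leq r(p_{\overline{X}}(n+1))$, and Lemma \ref{lem:deltapropertysetsprincipalfunction} yields the $\blacklozenge$-property for $(X^{[-2]}, X)$.

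There is no real obstacle here: the entire content of the lemma is the observation that the $\blacklozenge$-property for $(X^{[-2]}, X)$ literally unfolds, via Lemma \ref{lem:deltapropertysetsprincipalfunction} and the identity $p_{\overline{X^{[-2]}}}(n) = p_{\overline{X}}(n+2)$, into the negation of the definition of slowness. The only thing worth emphasizing in the write-up is why the index shift ``$n+2$ vs.\ $n+1$'' on the left matches the shift ``$n+1$ vs.\ $n$'' in the definition of slowness; this is the reason Remark \ref{rem:deltapropertyx1} warns that the analogous statement with $X^{[-1]}$ in place of $X^{[-2]}$ fails, since there the shift would only be by one and would collapse to a tautology.
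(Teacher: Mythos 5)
Your proof is correct and is exactly the paper's argument: the paper's own proof consists of the single line ``apply Lemma~\ref{lem:deltapropertysetsprincipalfunction}, noting that $p_{\overline{X^{[-2]}}}(n)=p_{\overline{X}}(n+2)$,'' and you have simply spelled out the index shift and the unfolding of the definition of slowness in full detail. No issues.
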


\begin{proof}
We apply Lemma~\ref{lem:deltapropertysetsprincipalfunction} and noting that
$p_{\overline{X^{[-2]}}}(n)=p_{\overline{X}}(n+2)$.
\end{proof}

\begin{lemma}\label{lem:x1boundsallnondeltaproperty}
Given any $R_Y\leq_{pr} R_X$, either $(Y,X)$ satisfies the
$\blacklozenge$-property or $R_Y\leq_{pr}R_{X^{[-1]}}$.
\end{lemma}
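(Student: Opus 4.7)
The plan is to apply Lemma~\ref{lem:deltapropertysetsprincipalfunction}, which, in our setting where $R_Y\leq_{pr} R_X$, characterises the $\blacklozenge$-property for $(Y,X)$ as the existence of a primitive recursive $r$ with $p_{\overline{Y}}(n)\leq r(p_{\overline{X}}(n+1))$ for infinitely many $n$. The whole proof will turn on a clean dichotomy: either this inequality already holds infinitely often with $r$ the identity, or it fails for almost every $n$.

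In the first branch, suppose $p_{\overline{Y}}(n)\leq p_{\overline{X}}(n+1)$ for infinitely many $n$. I take $r(m)=m$, which is trivially primitive recursive, and Lemma~\ref{lem:deltapropertysetsprincipalfunction} immediately yields the $\blacklozenge$-property for $(Y,X)$.

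In the complementary branch, there is some $N$ such that $p_{\overline{Y}}(n)>p_{\overline{X}}(n+1)$ for every $n\geq N$; I claim this forces $R_Y\leq_{pr} R_{X^{[-1]}}$. The intuition is that once we have seen the $N$-th zero of $Y$, the initial segment of $X$ already carries strictly more zeros than that of $Y$. To make this precise I would argue as follows: for any $s\geq p_{\overline{Y}}(N)$, setting $n=\ZOP{Y}[s]-1\geq N$ gives $s\geq p_{\overline{Y}}(n)>p_{\overline{X}}(n+1)$, whence $\ZOP{X}[s]\geq n+2=\ZOP{Y}[s]+1$ and so $\ZOP{X^{[-1]}}[s]\geq\ZOP{Y}[s]$. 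The finitely many $s<p_{\overline{Y}}(N)$ are absorbed by defining $h(s):=\max\{s,\,p_{\overline{X}}(N+1)\}$, which is primitive recursive because $p_{\overline{X}}(N+1)$ is a specific natural number (both $X$ and $Y$ are assumed coinfinite throughout the section). Then for every $s$, $h(s)\geq p_{\overline{X}}(N+1)$ gives $\ZOP{X}[h(s)]\geq N+2$, hence $\ZOP{X^{[-1]}}[h(s)]\geq N+1\geq\ZOP{Y}[s]$ when $s<p_{\overline{Y}}(N)$, while $h(s)\geq s$ covers the remaining range. Proposition~\ref{prop:growth-rate} then yields $R_Y\leq_{pr} R_{X^{[-1]}}$.

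I do not anticipate a serious obstacle. The only mildly delicate points are recognising that $p_{\overline{X}}(N+1)$ is a fixed constant (so that its use in $h$ preserves primitive recursiveness) and noticing that in the infinitely-often branch the identity already serves as the primitive recursive witness required by Lemma~\ref{lem:deltapropertysetsprincipalfunction}; the rest is a routine translation between principal-function inequalities and the zero-counting characterisation of $\leq_{pr}$ given by Proposition~\ref{prop:growth-rate}.
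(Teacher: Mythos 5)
Your proof is correct. The paper's own argument is a two-line contrapositive through the counting functions: if $R_Y\nleq_{pr}R_{X^{[-1]}}$, then $\ZOP{Y}[s]>\ZOP{X^{[-1]}}[s]$ for infinitely many $s$ (otherwise the identity, padded by a constant, would already witness the reduction via Lemma~\ref{lem:reductionintermsofcharfunction}), hence $\ZOP{Y}[s]\geq\ZOP{X}[s]$ infinitely often, and Corollary~\ref{cor:deltapropertycor} with $q=\mathrm{id}$ yields the $\blacklozenge$-property. You exploit the same underlying fact --- that $X^{[-1]}$ lags behind $X$ by exactly one zero, so a trivial function suffices as the $\blacklozenge$-witness --- but route it through the dual, principal-function characterization (Lemma~\ref{lem:deltapropertysetsprincipalfunction}) and organize it as a dichotomy rather than a contraposition, building the reduction in the second branch explicitly via Proposition~\ref{prop:growth-rate}. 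Both are valid; the paper's version is shorter because Corollary~\ref{cor:deltapropertycor} converts ``the identity works infinitely often'' directly into the $\blacklozenge$-property, whereas your route must absorb the finitely many exceptional $s$ by hand, which you do correctly with the constant $p_{\overline{X}}(N+1)$. One small caution: this section redefines $\ZOP{X}[s]$ as a step-count rather than the length-count used in Proposition~\ref{prop:growth-rate}; the two agree for primitive recursive sets up to a primitive recursive change of time scale, and the paper itself elides the distinction, but your step $s\geq p_{\overline{Y}}(n)$ does implicitly rely on the convention that seeing the zero at position $p_{\overline{Y}}(n)$ takes at least $p_{\overline{Y}}(n)$ steps.
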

\begin{proof}
If $R_Y\nleq_{pr}R_{X^{[-1]}}$ then $\ZOP{Y}[s]>\ZOP{X^{[-1]}}[s]$ for
infinitely many $s$, which means that $\ZOP{Y}[s]\geq \ZOP{X}[s]$ for
infinitely many $s$. Apply Corollary \ref{cor:deltapropertycor}.
\end{proof}

Lemma~\ref{lem:x1boundsallnondeltaproperty} tells us that $R_{X^{[-1]}}$
bounds all $R_Y$ below $R_X$ such that $(Y,X)$ does not have the
$\blacklozenge$-property. This will allow us to define the map
$\deg(R_X)\mapsto \deg(R_{X^{[-1]}})$. Towards this, we prove another lemma:

\begin{lemma}\label{lem:definablelem1}
Let $R_X$ and $R_Y$ be punctual equivalence relations with
$R_{X^{[-1]}}\nleq_{pr} R_Y$. Then there is some $Z$ such that $R_Z\leq_{pr}
R_{X^{[-1]}}$, $(Z,X)$ does not have the $\blacklozenge$-property and
$R_Z\nleq_{pr} R_Y$.
\end{lemma}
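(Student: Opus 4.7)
The plan is to build $Z$ in stages while maintaining $\overline{Z}\subseteq\overline{X^{[-1]}}$ (so that $R_Z\leq_{pr} R_{X^{[-1]}}$ is immediate), and satisfying, for each $e\in\omega$, the requirements
\[
P_e\colon\ (\exists s)\,\bigl[\ZOP{Z}[s]>\ZOP{Y}[p_e(s)]\bigr]
\qquad\text{and}\qquad
Q_e\colon\ (\forall^{\infty} n)\,\bigl[p_{\overline{Z}}(n)>p_e(p_{\overline{X}}(n+1))\bigr].
\]
All the $P_e$'s together will give $R_Z\nleq_{pr} R_Y$ via Lemma~\ref{lem:reductionintermsofcharfunction}, and all the $Q_e$'s together will give that $(Z,X)$ fails the $\blacklozenge$-property via Lemma~\ref{lem:deltapropertysetsprincipalfunction}.

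To run the construction I introduce the primitive recursive guard $F_e(k):=\max_{e'<e} p_{e'}(p_{\overline{X}}(k+1))$ and a running index $e^*(s)$, defined as the least $e$ such that no $s'<s$ already witnesses $P_e$. At stage $s$, declare $Z(s)=0$ iff $X^{[-1]}(s)=0$ and $s>F_{e^*(s)}(\ZOP{Z}[s-1])$. Because everything reduces to a bounded search through the finite history, the resulting $Z$ is primitive recursive (Remark~\ref{rem:CTT}), and the inclusion $\overline{Z}\subseteq\overline{X^{[-1]}}$ holds by design.

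The hard part will be verifying that every $P_e$ is met, and I will do so by contradiction. Assume $\ZOP{Z}[s]\le\ZOP{Y}[p_e(s)]$ for all $s$. By induction on $e$ (so every $P_{e'}$ with $e'<e$ has been met), $e^*(s)=e$ from some stage $s_0$ onward, and from $s_0$ the construction forces the zeros $z_0<z_1<\cdots$ of $Z$ to satisfy: for all $k$ past a threshold, $z_k$ is the least element of $\overline{X^{[-1]}}$ strictly exceeding $\max(z_{k-1},F_e(k))$. A bounded recursion using the primitive recursive enumeration of $\overline{X^{[-1]}}$ then produces a primitive recursive upper bound $G$ with $z_k\le G(k)$, whence $\ZOP{Z}[G(k)]\ge k+1$. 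Combining with the failure assumption yields $\ZOP{Y}[p_e(G(k))]\ge k+1$, and setting $h(s):=p_e(G(\ZOP{X^{[-1]}}[s]-1))$ gives $\ZOP{X^{[-1]}}[s]\le\ZOP{Y}[h(s)]$, so $R_{X^{[-1]}}\leq_{pr} R_Y$ by Lemma~\ref{lem:reductionintermsofcharfunction}, contradicting the hypothesis. Once each $P_e$ is satisfied, every $Q_e$ follows: only finitely many zeros are added while $e^*(s)\le e$, and every zero $z_n$ added with $e^*(s)>e$ satisfies $z_n>p_e(p_{\overline{X}}(n+1))$ by the guard itself (with $n=\ZOP{Z}[s-1]$ at the moment of the add). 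The delicate point is arranging the bounded recursion for $G$ to be genuinely primitive recursive; this works because once $e^*$ stabilizes, both $F_e$ and the subsequent evolution of $Z$ become deterministically determined by $F_e$ and the primitive recursive set $X^{[-1]}$ alone.
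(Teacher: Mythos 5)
Your overall architecture matches the paper's (the same two families of requirements, discharged via Lemmas~\ref{lem:reductionintermsofcharfunction} and~\ref{lem:deltapropertysetsprincipalfunction}, with a moving index that advances each time a $P_e$ is witnessed), but the implementation has a fatal gap in the verification of $P_e$. By insisting that $\overline{Z}\subseteq\overline{X^{[-1]}}$ and placing the $k$-th zero of $Z$ at the first element of $\overline{X^{[-1]}}$ beyond the guard $F_e(k)$, you force $z_k\geq p_{\overline{X^{[-1]}}}(k)$. A primitive recursive $G$ with $z_k\leq G(k)$ would therefore primitive recursively bound $p_{\overline{X^{[-1]}}}$, which (by the argument of Proposition~\ref{prop:topcrit}) would make $\overline{X^{[-1]}}$ non-p.r.-immune and hence force $R_X\equiv_{pr}\Id$. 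For every other $X$ --- and such $X$ exist by Theorem~\ref{prop:belowid} --- there is no ``primitive recursive enumeration of $\overline{X^{[-1]}}$'' and no such $G$: the wait for ``the next zero of $X^{[-1]}$ past the guard'' is an unbounded search. Consequently your contradiction argument for $P_e$ collapses: from the failure of $P_e$ you cannot manufacture a primitive recursive $h$ with $\ZOP{X^{[-1]}}[s]\leq\ZOP{Y}[h(s)]$, because the $m$-th zero of $Z$ may lie beyond every primitive recursive function of the position of the $m$-th zero of $X^{[-1]}$. Indeed $P_e$ can genuinely fail: the guard never consults $Y$, so it may thin $Z$ so much that $R_Z\leq_{pr}R_Y$ after all; and once some $P_e$ fails, $e^*$ stalls and the $Q_{e'}$ for $e'\geq e$ fail as well.

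The paper's proof avoids exactly this by decoupling $\overline{Z}$ from $\overline{X^{[-1]}}$: it only maintains the count inequality $\ZOP{Z}[s]<\ZOP{X}[s]$ (which already yields $R_Z\leq_{pr}R_{X^{[-1]}}$ by Lemma~\ref{lem:reductionintermsofcharfunction}) and adds the $i$-th zero of $Z$ at the very stage at which the computation $p_{V}(k)$ converges for the current input $k$ sitting at a zero of $X$, processing inputs sequentially one step per stage. The delay of $Z$ behind $X^{[-1]}$ is then controlled by the primitive recursive running time $\hat{p}_e$, which is what makes the chain $\ZOP{X^{[-1]}}[k]\leq\ZOP{Z}[\hat{p}_e(k)]\leq\ZOP{Y}[p_e(\hat{p}_e(k))]$ available when $V$ stalls, contradicting $R_{X^{[-1]}}\nleq_{pr}R_Y$. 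A secondary problem in your write-up is that evaluating $F_{e^*(s)}$ at stage $s$ requires the full values of $p_{e'}$ for all $e'<e^*(s)$, and the map $(e,x)\mapsto p_e(x)$ is total computable but not primitive recursive, so your $Z$ is not obviously primitive recursive either; the one-step-per-stage device in the paper is also what handles this.
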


\begin{proof}
Fix a computable listing $\{p_e\}_{e\in\omega}$ of all primitive recursive
functions as in Section~\ref{ssct:listing}. By making the function values
larger, we may assume that for every $p_e$ is strictly increasing, and that
$p_{e+1}(n)>p_e(n)$. This listing $(e,n)\mapsto p_e(n)$ is total computable
but of course not primitive recursive. We will also assume that
$p_e(x)=\psi(x)$ for some total computable function which halts in fewer than
$\hat{p}_e(x)$ many steps, where $\hat{p}_e$ is some primitive recursive
function. All indices can be found effectively.

We now define the set $Z$ in stages. Since $Z$ must be primitive recursive,
at every stage $s$ we must decide $Z\upharpoonright s+1$. In the below
construction, at each stage $s+1$, we will declare $\ZOP{Z}[s+1]=\ZOP{Z}[s]$
or $\ZOP{Z}[s]+1$; in the former case we mean that we set $Z(s+1)=1$ and in
the latter case we set $Z(s+1)=0$.

At stage $s$ we will have a parameter $V(s)$ which stands for the index such
that at stage $s$ we are attempting to make
$\ZOP{Z}[s]\nleq\ZOP{Y}[p_{V(s)}(s)]$. At stage $s=0$ we declare $0\in Z$
(i.e. $\ZOP{Z}[0]=0$) and set $V(0)=0$. Suppose we have the value of
$\ZOP{Z}[s]$ and $V(s)=e$. Compute $p_e(k)$ for one more step (where $k$ was
the input that was last processed).  If there is a new convergence $p_e(k)$
seen at this step such that $\ZOP{X}[k]\neq\ZOP{X}[k+1]$, we take
\[
\ZOP{Z}[s+1]=\min\{\ZOP{Z}[s]+1,\ZOP{X}[s]-1\}.
\]
Check if $\ZOP{Z}[t]>\ZOP{Y}[p_e(t)]$ for any $t\leq s$ for which we have
already found the value of $p_e(t)$. If so we increase $V$ by one. In all
other cases take $\ZOP{Z}[s+1]=\ZOP{Z}[s]$, and go to the next stage with the
same value of $V$.

The above gives a primitive recursive description of the set $Z$. Since
$\ZOP{Z}[s]<\ZOP{X}[s]$ for every $s$, we have $R_Z\leq_{pr} R_{X^{[-1]}}$.
Notice that the construction processes the inputs $k$ sequentially; namely,
the construction begins with $k=0$ and $V=0$ and waits for $p_0(0)$ to
converge (this takes $\hat{p}_0(0)$ many stages). It then moves on to $k=1$
and waits for $p_0(1)$ to converge, and so on. If ever, the construction
decides to increase the value of $V$ while waiting on, say, $p_0(5)$, then we
will move on to wait for $p_1(5)$ to converge, then $p_1(6)$, and so on. Let
$k^*(s)$ be the value of $k$ being processed by the construction at stage
$s$. Since $\{p_e\}_{e\in\omega}$ are all total, $\lim_s p^*(s)=\infty$.
Define the sequence $\{k_i\}_{i\in\omega}$ such that $\ZOP{X^{[-1]}}[k_i]=i$
and $\ZOP{X^{[-1]}}[k_i+1]=i+1$. Take $k_{-1}=-1$.

\begin{claim}\label{claim1}
For every stage $s$ we have $\ZOP{Z}[s]=i$, where $k_{i-1}<k^*(s)\leq k_i$.
\end{claim}
\begin{proof}
If $s=0$ then $i=k^*(0)=0$ and so $\ZOP{Z}[0]=0$. Assume $\ZOP{Z}[s]=i$ where
$k_{i-1}<k^*(s)\leq k_i$. Since the value of $\ZOP{Z}[s+1]$ is decided at the
end of stage $s$, we have to examine what the construction did at stage $s$.
At stage $s$ we would increase the value of $\ZOP{Z}$ only if
$p_{V(s)}(k^*(s))$ is found to converge at that stage and $k^*(s)=k_i$. In
that case $k^*(s+1)=k_i+1$ and so $k_i<k^*(s+1)\leq k_{i+1}$, and so we have
to check that $\ZOP{Z}[s+1]=i+1$. But note that as $k^*(s)< s$ we have
\[
\ZOP{X^{[-1]}}[s]\geq\ZOP{X^{[-1]}}[k^*(s+1)]=i+1=\ZOP{Z}[s]+1
\]
and so
\[
\ZOP{Z}[s+1]=\min\{\ZOP{Z}[s]+1,\ZOP{X}[s]-1\}=i+1. \qedhere
\]
\end{proof}

Next, we verify that $\lim_s V(s)=\infty$; suppose not. Let $t_0$ be the
least such that $V(t_0)=e$ and $V(s)=e$ for almost all $s$. Let
$t_0<t_1<t_2<\cdots$ be the stages such that $p_e(l_i)$ first converged at
stage $t_i$, where $i>0$, $k^*(t_i)=l_i<k^*(t_i+1)$ and $l_i=k_{i+t_0}$. Note
that $l_1> t_0$. By our convention above, we have that $t_i=\hat{p}_e(l_i)$.
By Claim \ref{claim1} we see that $\ZOP{Z}[t]=\ZOP{Z}[t_{i+1}]$ for every $t$
and $i$ such that $t_i<t\leq t_{i+1}$.

For every $k$ and $i>0$ such that $l_i<k\leq l_{i+1}$, we have
$t_i+1=\hat{p}_e(l_i)+1\leq\hat{p}_e(l_i+1)\leq \hat{p}_e(k)\leq
\hat{p}_e(l_{i+1})=t_{i+1}$, and since $\ZOP{Z}[t_i+1]=\ZOP{Z}[t_{i+1}]$, we
also have $\ZOP{Z}[\hat{p}_e(k)]=\ZOP{Z}[t_{i+1}]$. Therefore, we have
\begin{align*}
\ZOP{X^{[-1]}}[k]&\leq \ZOP{X^{[-1]}}[l_{i+1}] \quad \text{(by Claim \ref{claim1})}\\
&=\ZOP{Z}[t_{i+1}]\\
&=\ZOP{Z}[\hat{p}_e(k)] \quad \text{(as the construction never increased $V$ after $t_0$)}\\
&\leq \ZOP{Y}[p_e(\hat{p}_e(k))].
\end{align*}
This shows that $R_{X^{[-1]}}\leq_{pr}R_{Y}$, contrary to our assumption.

Now that we know $\lim_s V(s)=\infty$, for every $e$ there must be a stage
$s$ of the construction where we saw $\ZOP{Z}[t]>\ZOP{Y}[p_e(t)]$ for some
$t\leq s$, which means that $R_Z\nleq_{pr} R_Y$. Now consider some primitive
recursive function $p_e$. Let $V(s)=e$ for some $s$. For each $k>k^*(s)$ we
let $t>s$ be the stage where $k^*(t)=k<k^*(t+1)$, and $i$ be such that
$k_{i-1}<k\leq k_{i}$. By Claim \ref{claim1}, $\ZOP{Z}[t]=i$. We now have
\begin{align*}
\ZOP{Z}[p_e(k)]&\leq \ZOP{Z}[\hat{p}_e(k)]\\
&\leq\ZOP{Z}[\hat{p}_{V(t)}(k)] \quad &\text{(at stage $t$ we saw $p_{V(t)}(k)$ converge)}\\
&=\ZOP{Z}[t]\\
&= i\\
&=\ZOP{X^{[-1]}}[k_i]\\
&=\ZOP{X^{[-1]}}[k]\\
&<\ZOP{X}[k].
 \end{align*}
Thus, $(Z,X)$ does not have the $\blacklozenge$-property.
\end{proof}

Now we are ready to apply the analysis started above.

\begin{corollary}
The map $\deg(R_X)\mapsto \deg(R_{X^{[-1]}})$ is definable in $(\Peq,\leq)$.
\end{corollary}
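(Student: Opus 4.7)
The plan is to produce a first-order formula $\phi(W, X)$ in the language $\{\leq\}$ such that, for every primitive recursive set $X$, the set of $W$ satisfying $\phi(W, R_X)$ is exactly the $\equiv_{pr}$-class of $R_{X^{[-1]}}$. The first ingredient is Corollary~\ref{cor:deltapropertydefinable}: for $R_Y \leq_{pr} R_X$, the property ``$(Y, X)$ has the $\blacklozenge$-property'' is first-order in $(\Peq, \leq)$, since it says that the interval $[\deg(R_Y), \deg(R_X)]$ embeds the diamond preserving $0$ and $1$. So the negation $\neg\blacklozenge(Y, X)$ is also first-order, and I can freely use it in $\phi$.

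I would then let $\phi(W, X)$ assert: (i) $W \leq_{pr} R_X$; (ii) every $Y \leq_{pr} R_X$ for which $(Y, X)$ fails the $\blacklozenge$-property satisfies $Y \leq_{pr} W$; and (iii) $W$ is $\leq_{pr}$-least among all elements meeting (i) and (ii). The heart of the verification is to identify the set $\mathcal{S}_X := \{W : W \text{ satisfies (i) and (ii)}\}$ with the principal filter $\{W : R_{X^{[-1]}} \leq_{pr} W \leq_{pr} R_X\}$, whose minimum is then precisely $R_{X^{[-1]}}$. The inclusion $\supseteq$ is immediate from Lemma~\ref{lem:x1boundsallnondeltaproperty}: if $R_{X^{[-1]}} \leq_{pr} W \leq_{pr} R_X$ and $(Y, X)$ fails $\blacklozenge$ with $Y \leq_{pr} R_X$, then $Y \leq_{pr} R_{X^{[-1]}} \leq_{pr} W$, so $W \in \mathcal{S}_X$.

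The inclusion $\subseteq$ is the point where Lemma~\ref{lem:definablelem1} does the work. Suppose, toward a contradiction, that $W \in \mathcal{S}_X$ but $R_{X^{[-1]}} \nleq_{pr} W$. Applying Lemma~\ref{lem:definablelem1} with $Y := W$ produces a set $Z$ with $R_Z \leq_{pr} R_{X^{[-1]}} \leq_{pr} R_X$, $(Z, X)$ failing the $\blacklozenge$-property, and $R_Z \nleq_{pr} W$, which directly contradicts clause (ii) of $W \in \mathcal{S}_X$. Hence $R_{X^{[-1]}} \leq_{pr} W$, as desired. This equates $\mathcal{S}_X$ with the filter above, so $R_{X^{[-1]}}$ is its $\leq_{pr}$-minimum, and $\phi$ isolates precisely the $\equiv_{pr}$-class of $R_{X^{[-1]}}$. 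Since $\phi$ is phrased purely in terms of $\leq_{pr}$, the characterization is automatically invariant under $\equiv_{pr}$, so the map $\deg(R_X) \mapsto \deg(R_{X^{[-1]}})$ is first-order definable in $(\Peq, \leq)$. The only real obstacle—showing that no strictly smaller $W$ can bound every non-$\blacklozenge$ element below $R_X$—has already been resolved by Lemma~\ref{lem:definablelem1}, so the remainder of the argument is purely a bookkeeping assembly.
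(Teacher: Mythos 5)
Your proposal is correct and follows the paper's own proof: the paper likewise defines $\deg(R_{X^{[-1]}})$ as the least degree below $\deg(R_X)$ bounding all $R_Z\leq_{pr}R_X$ for which $(Z,X)$ fails the $\blacklozenge$-property, using Corollary~\ref{cor:deltapropertydefinable} for definability of the $\blacklozenge$-property, Lemma~\ref{lem:x1boundsallnondeltaproperty} for the upper-bound direction, and Lemma~\ref{lem:definablelem1} for minimality. You have simply written out the verification in more detail than the paper's terse argument.
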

\begin{proof}
Apply Lemmas \ref{lem:x1boundsallnondeltaproperty} and
\ref{lem:definablelem1} to see the following. Given punctual $R_X$ and $R_Y$,
we have $R_Y\equiv_{pr} R_{X^{[-1]}}$ if and only if the following hold:
\begin{enumerate}
\item  $R_Y\leq_{pr} R_X$,
\item For every $R_Z\leq_{pr} R_X$ such that $(Z,X)$ does not have the
    $\blacklozenge$-property, $R_Z\leq_{pr} R_Y$, and
\item If $R_V$ has properties (1) and (2), then $R_V\geq_{pr} R_Y$.
\end{enumerate}
That is, we may define $R_{X^{[-1]}}$ as the least degree below $R_X$ that is
an upperbound for the set of all degrees $R_Z\leq_{pr} R_X$ such that $(Z,X)$
does not have the $\blacklozenge$-property. Since the
$\blacklozenge$-property is definable, so is the set of all pairs $\left(
\deg(R_X),\deg(R_{X^{[-1]}}) \right)$.
\end{proof}

\begin{corollary}
If $\psi$ is an automorphism of $(\Peq,\leq)$, then for any $R_X$, we have
that $\psi(\deg(R_{X^{[-1]}}))=\deg(R_{Y^{[-1]}})$, where
$R_Y\equiv_{pr}\psi(R_X)$.
\end{corollary}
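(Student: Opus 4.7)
The plan is to derive this as a direct consequence of the previous corollary, which shows that the graph of the map $\deg(R_X) \mapsto \deg(R_{X^{[-1]}})$ is first-order definable in the language $\{\leq\}$ of $(\Peq,\leq)$. Since any automorphism of a structure preserves every first-order definable relation on that structure, $\psi$ must commute with this definable map, which is exactly what the statement asserts.

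More concretely, first I would recall the definition provided by the previous corollary: $\deg(R_Y) = \deg(R_{X^{[-1]}})$ holds if and only if $\deg(R_Y)$ is the least degree below $\deg(R_X)$ that is an upper bound for all $\deg(R_Z)\leq_{pr}\deg(R_X)$ for which $(Z,X)$ fails to have the $\blacklozenge$-property. Each ingredient here (the order $\leq_{pr}$, the $\blacklozenge$-property via Corollary~\ref{cor:deltapropertydefinable}, and the notion of being a least upper bound) is first-order definable in $(\Peq,\leq)$. Hence the binary relation $F(\mathbf{a},\mathbf{b})$ saying ``$\mathbf{b}$ is the $[-1]$-image of $\mathbf{a}$'' is itself first-order definable.

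Now let $\psi$ be an automorphism and fix a punctual $R_X$; set $\mathbf{a}=\deg(R_X)$, $\mathbf{b}=\deg(R_{X^{[-1]}})$, so that $F(\mathbf{a},\mathbf{b})$ holds. Since $\psi$ preserves every first-order definable relation, we get $F(\psi(\mathbf{a}),\psi(\mathbf{b}))$. Picking any punctual $R_Y$ with $\deg(R_Y)\equiv_{pr}\psi(\deg(R_X))=\psi(\mathbf{a})$, the uniqueness clause in the definition of $F$ (i.e.\ being the \emph{least} upper bound) forces $\psi(\mathbf{b})=\deg(R_{Y^{[-1]}})$, which is precisely the conclusion.

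There is essentially no obstacle here, since the whole content of the proof lies in the definability established in the previous corollary; the only thing to double-check is that the formula used to define $F$ really is preserved by automorphisms, which is automatic for first-order formulas in the language of the structure. I would therefore present the argument as a short, two-sentence proof appealing directly to the definability result.
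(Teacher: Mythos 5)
Your proposal is correct and is exactly the argument the paper intends: the corollary is stated as an immediate consequence of the preceding definability result, since automorphisms preserve every relation that is first-order definable without parameters in $(\Peq,\leq)$. The paper gives no separate proof, and your two-sentence appeal to definability is precisely what is needed.
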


\begin{corollary}
If $\psi$ is an automorphism of $(\Peq,\leq)$, then $R_X$ is slow if and only
if $\psi(R_X)$ is slow.
\end{corollary}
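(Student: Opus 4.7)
The plan is to combine the two preceding definability results with Lemma~\ref{lem:x2deltaproperty}, which characterizes slowness in terms of the failure of the $\blacklozenge$-property for the pair $(X^{[-2]}, X)$. Since both the $[-1]$ operator and the $\blacklozenge$-property are invariant under any automorphism, slowness will inherit the same invariance.

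First, I would observe that the iterate $\deg(R_X) \mapsto \deg(R_{X^{[-2]}})$ is definable in $(\Peq, \leq)$, as it is the composition of the map $\deg(R_X) \mapsto \deg(R_{X^{[-1]}})$ with itself, and the latter has just been shown to be definable. Consequently, if $\psi$ is any automorphism and $R_Y \equiv_{pr} \psi(R_X)$, then $\psi(\deg(R_{X^{[-2]}})) = \deg(R_{Y^{[-2]}})$.

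Second, by Corollary~\ref{cor:deltapropertydefinable}, whether a pair $(U,V)$ with $R_U \leq_{pr} R_V$ satisfies the $\blacklozenge$-property is expressible as an elementary property of the interval $[\deg(R_U), \deg(R_V)]$, namely, that this interval admits a $0,1$-preserving embedding of the diamond lattice. This property is therefore preserved by $\psi$. Combining with the previous paragraph, $(X^{[-2]}, X)$ satisfies the $\blacklozenge$-property if and only if $(Y^{[-2]}, Y)$ does.

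Finally, applying Lemma~\ref{lem:x2deltaproperty} at both ends yields the chain of equivalences: $X$ is slow iff $(X^{[-2]}, X)$ fails the $\blacklozenge$-property iff $(Y^{[-2]}, Y)$ fails the $\blacklozenge$-property iff $Y$ is slow, completing the proof. The only mildly delicate point worth flagging is that slowness, while defined as a property of a set, must descend to a property of the pr-degree before the automorphism argument can even be formulated; but this invariance is exactly what Lemma~\ref{lem:x2deltaproperty} secures, since its right-hand side depends only on the degrees involved. There is no substantive obstacle beyond assembling these ingredients.
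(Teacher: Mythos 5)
Your proof is correct and follows essentially the same route as the paper: both arguments combine the definability of the $[-1]$ operator (hence of its iterate $[-2]$), the definability of the $\blacklozenge$-property via the diamond embedding, and Lemma~\ref{lem:x2deltaproperty} characterizing slowness through the pair $(X^{[-2]},X)$. Your explicit remark that slowness descends to a degree-invariant property is a fair point that the paper leaves implicit, but it does not change the substance of the argument.
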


\begin{proof}
By Lemma \ref{lem:x2deltaproperty}, $R_X$ is slow if and only if
$(X^{[-2]},X)$ does not have the $\blacklozenge$-property, if and only if
$\left(\psi(X^{[-2]}),\psi(X)\right)$\footnote{The notation is not formally
correct, but has the obvious meaning here.} does not have the
$\blacklozenge$-property. But then $\psi(X^{[-2]})=\psi(X)^{[-2]}$ which is
equivalent to the fact that $\psi(X)$ is slow.
\end{proof}

We are now able to give a negative answer to our question above regarding
whether every lowercone is isomorphic to $\Peq$. In fact, \emph{no} proper
lowercone can be isomorphic to $\Peq$. Even though each lowercone is
principal, our intuition that we should be able to replicate the structure
below $\Id$ in any lowercone by ``relativising" is entirely incorrect.

\begin{corollary}
The lowercone $\{\deg(R_Y)\mid R_Y\leq_{pr}R_X\}$ below $R_X$ can never be
isomorphic to $(\Peq,\leq)$ unless $R_X\equiv_{pr}\Id$. Thus, no proper
lowercone can be isomorphic to $(\Peq,\leq)$.
\end{corollary}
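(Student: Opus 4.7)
The plan is to exploit the definable operation $\deg(R_X)\mapsto\deg(R_{X^{[-1]}})$ established just before the statement, combined with Lemma~\ref{lem:delta13}. Let $X$ be such that $R_X<_{pr}\Id$, and suppose toward a contradiction that there is an order isomorphism
\[
\phi\colon (\Peq,\leq)\;\longrightarrow\;(L(X),\leq),\qquad L(X)=\{\deg(R_Y):R_Y\leq_{pr}R_X\}.
\]
Both structures have a greatest element, so $\phi$ must send the top of $\Peq$ to the top of $L(X)$, that is, $\phi(\deg(\Id))=\deg(R_X)$.

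Next I would observe that the binary relation ``$R_Y\equiv_{pr} R_{X^{[-1]}}$'' is definable in the pure partial order language by the formula obtained from the previous corollary: $R_Y$ is the least degree $\leq_{pr} R_X$ that upper-bounds all $R_Z\leq_{pr}R_X$ for which $(Z,X)$ fails the $\blacklozenge$-property. The $\blacklozenge$-property itself is order-definable by Corollary~\ref{cor:deltapropertydefinable} (via embeddability of the diamond preserving $0$ and $1$), and both quantifiers in the formula only range over degrees in the interval $[\bot,R_X]$. Hence the same first-order formula, interpreted in the isomorphic structures $\Peq$ and $L(X)$, picks out the $[-1]$ operation in each: in $\Peq$ it sends $\deg(\Id)$ to $\deg(\Id^{[-1]})$, and in $L(X)$ it sends $\deg(R_X)$ (the top of $L(X)$) to $\deg(R_{X^{[-1]}})$.

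Since $\phi$ preserves every order-definable relation, applying it to the pair $(\deg(\Id),\deg(\Id^{[-1]}))$ yields
\[
\phi(\deg(\Id^{[-1]}))\;=\;\deg(R_{X^{[-1]}}).
\]
Now $\Id^{[-1]}\equiv_{pr}\Id$ (this is just the case $R_X\equiv_{pr}\Id$ of Lemma~\ref{lem:delta13}), so the left-hand side equals $\phi(\deg(\Id))=\deg(R_X)$. Therefore $R_X\equiv_{pr}R_{X^{[-1]}}$, and Lemma~\ref{lem:delta13} then forces $R_X\equiv_{pr}\Id$, contradicting our assumption. The ``thus'' in the statement is immediate, since a proper lowercone is exactly the case $R_X<_{pr}\Id$.

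The main conceptual obstacle is checking that the formula defining $X\mapsto X^{[-1]}$ really transfers faithfully to the lowercone: one must verify that the $\blacklozenge$-property is absolute between $\Peq$ and $L(X)$ (which is clear, as the defining interval and any diamond witnesses lie inside $L(X)$), and that the quantifier ``for every $R_Z\leq_{pr}R_X$'' in the definition of the $[-1]$ operation genuinely ranges over all of $L(X)$ when interpreted internally there. Once those two absoluteness points are settled, the rest is a short pigeonhole on fixed points of the $[-1]$ operation.
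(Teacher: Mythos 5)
Your proof is correct and essentially the same as the paper's: both reduce to the observation that the definable operation $\deg(R_X)\mapsto\deg(R_{X^{[-1]}})$ (Corollary~\ref{cor:finitenotdefinable}'s companion, the corollary right before the statement) fixes the top of $\Peq$ (since $\Id^{[-1]}\equiv_{pr}\Id$) but, by Lemma~\ref{lem:delta13}, does not fix the top $\deg(R_X)$ of any proper lowercone. The paper packages this as ``no degree strictly below $\deg(\Id)$ can serve as the image of $\deg(R_{X^{[-1]}})$'' by invoking Lemma~\ref{lem:definablelem1} directly, whereas you invoke the already-established definability of the $[-1]$ map and note (correctly, since all quantifiers in the defining formula are bounded by $x$ and meets, joins, and diamond witnesses are absolute for lowercones) that an isomorphism must transport the fixed-point structure; these are two phrasings of the same argument.
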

\begin{proof}
If $R_X<_{pr}\Id$ then by Lemma \ref{lem:delta13}, $R_{X^{[-1]}}<_{pr}R_X$.
Thus, by Lemma \ref{lem:x1boundsallnondeltaproperty}, $\deg(R_{X^{[-1]}})$ is
a degree strictly below $\deg(R_X)$ that is an upperbound on the set of all
degrees  $R_Z\leq_{pr} R_X$ such that $(Z,X)$ does not have the
$\blacklozenge$-property (and thus does not embed the diamond). By Lemma
\ref{lem:definablelem1}, (applied with $R_X=R_{X^{[-1]}}=\Id$), no degree
strictly below $\deg(\Id)$ can serve as the image of $\deg(R_{X^{[-1]}})$.
\end{proof}

Our analysis on lowercones exploits the unique property satisfied by
$X^{[-1]}$, and thus can only be applied to separate an incomplete (or
proper) lowercone from $\Peq$. Using our analysis, we can still say a little
more; we show that not every pair of proper lowercones are isomorphic:
\begin{corollary}
If $X$ is slow and $Y$ is not, then their lowercones cannot be isomorphic (as
posets).
\end{corollary}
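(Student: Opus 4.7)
The plan is to adapt the argument from the preceding corollary about automorphisms of $\Peq$, checking that every ingredient used there is in fact definable within the lowercone of $\deg(R_X)$, and hence survives restriction to an isomorphism of lowercones. Suppose for contradiction that $\psi$ is an order-isomorphism from the lowercone of $\deg(R_X)$ onto the lowercone of $\deg(R_Y)$; since both cones have a unique maximum, $\psi$ necessarily sends $\deg(R_X)$ to $\deg(R_Y)$.

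First, I would observe that the $\blacklozenge$-property is local: by Corollary~\ref{cor:deltapropertydefinable}, for $R \leq_{pr} S$ the pair $(R,S)$ has the $\blacklozenge$-property if and only if the interval $[\deg(R),\deg(S)]$ embeds the diamond lattice preserving $0$ and $1$, and this is a statement that only quantifies over degrees lying inside that interval. Hence if a lowercone contains both $\deg(R)$ and $\deg(S)$, the property is correctly decided inside the lowercone, and so is preserved by $\psi$.

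Next, I would combine Lemma~\ref{lem:x1boundsallnondeltaproperty} and Lemma~\ref{lem:definablelem1} to see that the operation $\deg(R_W) \mapsto \deg(R_{W^{[-1]}})$ is definable within the lowercone of any degree $\geq \deg(R_W)$: namely, $\deg(R_{W^{[-1]}})$ is the least degree $\leq \deg(R_W)$ that serves as an upper bound for every $\deg(R_Z) \leq \deg(R_W)$ such that $(Z,W)$ lacks the $\blacklozenge$-property. All the witnesses and parameters involved remain $\leq \deg(R_W)$, so this definition is intrinsic to the lowercone. Applying it inside the lowercone of $\deg(R_X)$ yields $\psi(\deg(R_{X^{[-1]}})) = \deg(R_{Y^{[-1]}})$, and iterating inside the smaller lowercone of $\deg(R_{X^{[-1]}})$ yields $\psi(\deg(R_{X^{[-2]}})) = \deg(R_{Y^{[-2]}})$.

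To finish, I would invoke Lemma~\ref{lem:x2deltaproperty}: since $X$ is slow, the pair $(X^{[-2]}, X)$ lacks the $\blacklozenge$-property; by the preservation established in the first step, the image pair $(Y^{[-2]}, Y)$ must also lack it, forcing $Y$ to be slow, contrary to hypothesis. The only genuinely subtle point, already handled by the cited lemmas, is confirming that both the $\blacklozenge$-relation and the $[-1]$-operation are truly \emph{local} to the lowercone, i.e.\ that neither definition secretly appeals to degrees lying above $\deg(R_X)$; once this is verified, the argument reduces to reusing the automorphism proof verbatim.
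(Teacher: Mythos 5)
Your proposal is correct and follows essentially the same route as the paper: both identify $\deg(R_{W^{[-1]}})$ as the least upper bound, inside the lowercone, of the degrees $\deg(R_Z)\leq\deg(R_W)$ for which $(Z,W)$ fails the $\blacklozenge$-property (via Lemmas~\ref{lem:x1boundsallnondeltaproperty} and \ref{lem:definablelem1}), iterate to get $X^{[-2]}\mapsto Y^{[-2]}$, and conclude with Lemma~\ref{lem:x2deltaproperty}. Your explicit check that the $\blacklozenge$-property and the $[-1]$-operation are local to the lowercone is a welcome (if implicit in the paper) clarification, not a different argument.
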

\begin{proof}
Because $X^{[-1]}$ (and similarly $Y^{[-1]}$) is the least upperbound of the
set of all $R_Z\leq_{pr}R_X$ such that $(Z,X)$ does not embed the diamond,
any isomorphism between the two lowercones must send $X^{[-1]}$ to $Y^{[-1]}$
and therefore must map $X^{[-2]}$ to $Y^{[-2]}$. However, as $X$ is slow and
$Y$ is not, by Lemma \ref{lem:x2deltaproperty}, $(Y^{[-2]},Y)$ satisfies the
$\blacklozenge$-property whereas $(X^{[-2]},X)$ does not.
\end{proof}
Since it is not hard to construct a pair of incomparable degrees, one of
which is slow and the other is not, we immediately have a pair of
incomparable lowercones that are not isomorphic. This leaves the intriguing
question as to whether any pair of incomparable lowercones are isomorphic. We
leave this question open:
\begin{question}
Are there incomparable degrees $R_X\mid_{pr} R_Y$ with isomorphic lowercones?
\end{question}

\begin{question}
Are there continuum many automorphisms of $(\Peq,\leq)$?
\end{question}

Given that (Corollary \ref{cor:finitenotdefinable}) no finite set of degrees
except for $\{\deg(\Id)\}$ is definable (without parameters), it may be
difficult to apply the analysis of \cite{Andrews-Sorbi-19,Andrews-et-al-20}
to encode finite graphs into $\Peq$, which would involve having to define
finite sets of degrees, albeit with a parameter. So we also ask:
\begin{question}
Is the first order theory of  $(\Peq,\leq)$ decidable?
\end{question}

\bibliographystyle{plain}
\bibliography{prim-rec}

\end{document}